\definecolor{asparagus}{rgb}{0.0, 0.5, 0.0}
\newcommand\xlabel[2][]{\phantomsection\def\@currentlabelname{#1}\label{#2}}
\theoremstyle{plain}
\newtheorem{thm}{Theorem}[section]
\newtheorem{lem}[thm]{Lemma}
\newtheorem{prop}[thm]{Proposition}
\newtheorem{opqn}{Open Question}
\theoremstyle{definition}
\newtheorem{defn}{Definition}[section]
\newcommand{\group}{\mathfrak{G}}
\newcommand{\ynirr}{\mathcal{Y}_n(\widehat{G})}
\newcommand{\irrG}{\widehat{G}}
\newcommand{\B}{\mathcal{B}}
\DeclareMathOperator{\rate}{\bf{r}}
\DeclareMathOperator{\D}{Diag}
\DeclareMathOperator{\diag}{diag}
\DeclareMathOperator{\Sp}{Spec}
\DeclareMathOperator{\s}{sym}
\DeclareMathOperator{\transp}{\mathscr{T}}
\DeclareMathOperator{\T}{\bf{T}}
\DeclareMathOperator{\NYGD}{\mathscr{N}}
\DeclareMathOperator{\MYGD}{\mathscr{M}}
\DeclareMathOperator{\G}{\mathscr{G}}
\DeclareMathOperator{\V}{\mathscr{V}}
\DeclareMathOperator{\W}{\mathscr{W}}
\DeclareMathOperator{\Gnid}{\mathbf{1}}
\DeclareMathOperator{\Gid}{\mathbf{e}}
\DeclareMathOperator{\1}{\mathbf{id}}
\DeclareMathOperator{\Supp}{Supp}
\DeclareMathOperator{\trivial}{tr}
\DeclareMathOperator{\Tr}{trace}
\DeclareMathOperator{\dimension}{dim}
\DeclareMathOperator{\y}{\mathcal{Y}}
\DeclareMathOperator{\yn}{\mathcal{Y}_n}
\DeclareMathOperator{\yx}{\mathcal{Y}(X)}
\DeclareMathOperator{\ynx}{\mathcal{Y}_n(X)}
\DeclareMathOperator{\tab}{tab}
\title{}
\begin{document}
\title[Aldous-type spectral gap results for $G\wr S_n$]{Aldous-type spectral gap results for the complete monomial group}
\author{Subhajit Ghosh}
\address[Subhajit Ghosh]{Department of Mathematics, Bar-Ilan University, Ramat-Gan 5290002}
\email{gsubhajit@alum.iisc.ac.in}
\keywords{Aldous' conjecture, complete monomial group, spectral gap, interchange process}
\makeatletter
\@namedef{subjclassname@2020}{%
	\textup{2020} Mathematics Subject Classification}
\makeatother
\subjclass[2020]{37A30, 60J27, 15A18, 05C50, 20C15, 60K35.}
\begin{abstract}
Let us consider the continuous-time random walk on $G\wr S_n$, the complete monomial group of degree $n$ over a finite group $G$, as follows: An element in $G\wr S_n$ can be multiplied (left or right) by an element of the form
\begin{itemize}
	\item $(u,v)_G:=(\Gid,\dots,\Gid;(u,v))$ with rate $x_{u,v}(\geq 0)$, or
	\item $(g)^{(w)}:=(\dots,\Gid,\hspace*{-0.65cm}\underset{\substack{\uparrow\\w\text{th position}}}{g}\hspace*{-0.65cm},\Gid,\dots;\1)$ with rate $y_w\alpha_g\; (y_w> 0,\;\alpha_g=\alpha_{g^{-1}}\geq 0)$,
\end{itemize} 
such that $\{(u,v)_G,\;(g)^{(w)}:x_{u,v}>0,\;y_w\alpha_g>0,\;1\leq u<v\leq n,\;g\in G,\;1\leq w\leq n\}$ generates $G\wr S_n$. We also consider the continuous-time random walk on $G\times\{1,\dots,n\}$ generated by one natural action of the elements $(u,v)_G,1\leq u<v\leq n$ and $(g)^{(w)},\;g\in G,1\leq w\leq n$ on $G\times\{1,\dots,n\}$ with the aforementioned rates. We show that the spectral gaps of the two random walks are the same. This is an analogue of the Aldous' spectral gap conjecture for the complete monomial group of degree $n$ over a finite group $G$.
\end{abstract}
\maketitle
\section{Introduction}\label{intro}
Let $S_n$ denote the symmetric group of permutations on $n$ letters $[n]:=\{1,\dots,n\}$. Also, let $\G=(V,E)$ be any graph with vertex set $V=[n]$. The (weighted) interchange process on $\G$ is defined as follows: Suppose the edges of graph $\G$ are equipped with Poisson clocks (alarm clocks that ring at time distributed as the exponential distribution); all clocks are independent. Let the edge $\{u,v\}\in E$ be equipped with the Poisson clock with rate $x_{u,v}$. Now place $n$ distinct marbles at the vertices of the graph, and whenever the clock at $\{u,v\}$ rings exchange the marbles at positions $u$ and $v$. Therefore, each marble does continuous-time random walk on the graph, but the walk for any two different marbles are independent. The process can be viewed as a continuous-time random walk on the symmetric group $S_n$, as the positions of the marbles at a given time are a permutation of their original positions. The Aldous' spectral gap conjecture says that \emph{the spectral gap for the interchange process on a finite connected graph is the same as the spectral gap of the continuous-time random walk on that graph}. Here, the connected graph is necessary for the irreducibility of the corresponding Markov chain. Recall that the smallest eigenvalue of the continuous-time Markov chain is zero. Moreover, the multiplicity of the smallest eigenvalue is one if and only if the Markov chain is irreducible. The \emph{spectral gap} of an irreducible continuous-time Markov chain is defined as the smallest positive eigenvalue of the chain. 

The Aldous' spectral gap conjecture was stated around 1992 as an open question in Aldous' homepage \cite{Ald._conj}. The original conjecture was for the finite unweighted (i.e., the rates of the Poisson clocks are the same) connected graphs. The conjecture had received a lot of attention since it was stated, and various special cases were established until it was proved to the complete generality (for finite weighted connected graphs) by Caputo et al. in 2010 (see \cite{CLR} and the references therein). Afterwards, several extensions have been established for the continuous-time random walk on $S_n$ with different generating sets and also on some other groups (for instance, see \cite{Cesi,PP}). 

In this article, we generalise the Aldous' spectral gap conjecture to the complete monomial group $G\wr S_n$ of degree $n$ over a finite group $G$. Our setup deals with oriented marbles. The orientations of each marble are indexed by the elements of the group $G$. A marble with orientation $x\;(\in G)$ is said to \emph{update its orientation using $g\;(\in G)$} if it changes its orientation from $x$ to $g\cdot x$. We now define the oriented interchange process as follows:
\begin{defn}[{Oriented interchange process}]\label{def:IP_Ori-Par}
	Let $\G=(V,E)$ be any graph with vertex set $V=[n]$. The edges and the vertices are equipped with Poisson clocks; all clocks are independent. Let the edge $\{u,v\}\in E$ be equipped with the Poisson clock with rate $x_{u,v}\;(\geq 0)$\footnote{We set $x_{u,v}=0$ when the edge $\{u,v\}$ is absent.}, and the vertex $w\in V$ be equipped with the Poisson clock with rate $y_w\;(>0)$. Now place $n$ distinct oriented marbles at the vertices of the graph. Whenever the clock at edge $\{u,v\}$ rings, exchange the marbles at positions $u$ and $v$; and whenever the clock at vertex $w$ rings, the marble updates its orientation using $g\;(\in G)$ with rate $\alpha_g\geq0$. Here, we assume that the subset $\{g\in G:\alpha_g>0\}$ generates the group $G$. 
\end{defn}
Each oriented marble in Definition \ref{def:IP_Ori-Par} performs a continuous-time random walk on $\G$. However, the walks for any two different oriented marbles are independent. The oriented interchange process can be viewed as a continuous-time random walk on the complete monomial group $G\wr S_n$, as the configuration (positions of the marbles and their orientations) at a given time can be determined by elements of $G\wr S_n$. The continuous-time random walk of one oriented marble can be explained as the \emph{continuous-time lamplighter random walk on $\G$ with $G$-valued lamps}. Let us now recall the continuous-time lamplighter random walk on $\G$ with $G$-valued lamps. Consider the graph $\G$ of Definition \ref{def:IP_Ori-Par} with the independent Poisson clocks equipped at the edges and the vertices. Suppose each vertex has a lamp with configurations indexed by the elements of $G$, abbreviated as $G$-valued lamp. A lamplighter at vertex $u$ moves to a neighbouring vertex $v$ when the clock at $\{u,v\}\in E$ rings and changes the lamp configuration at vertex $u$ when the clock at $u\in V$ rings.  If the configuration of a lamp is $x\in G$, then it changes to $g\cdot x\in G$ with rate $\alpha_g(\geq0)$; we assume that the set $\{g\in G:\alpha_g>0\}$ generates $G$. The oriented interchange process can also be explained in the lamplighter group setting with $n$ lamplighters at the vertices of $\G$. Let $u,v\in V$, the lamplighters at $u$ and $v$ exchange their positions whenever the clock at $\{u,v\}\in E$ rings; whenever the clock at vertex $w\in V$ rings, the lamplighter at $w\in V$ change the lamp configuration. If the configuration of a lamp is $x\in G$, then it changes to $g\cdot x\in G$ with rate $\alpha_g(\geq0)$; we assume that the set $\{g\in G:\alpha_g>0\}$ generates $G$. Therefore, each lamplighter performs continuous-time lamplighter random walk on $\G$ with $G$-valued lamps, and the walk of any two lamplighters are independent. The process can be viewed as a continuous-time random walk on $G\wr S_n$. We note that in the case of the process with oriented marbles, the generators of the random walk on $G\wr S_n$ act by multiplication on the left, whereas for the case with lamplighters, the multiplication is on the right. Here, we adopt the oriented marble that uses the \emph{left regular representation} (will be defined in Section \ref{sec:Preliminaries}); the lamplighter case can be dealt with by considering the \emph{right regular representation} (will be defined in Section \ref{sec:Preliminaries}) -- the analysis is the same. Our main theorem says: \emph{Let $\G$ be a connected graph with vertex set $[n]$. If $\alpha_g=\alpha_{g^{-1}}$, then the spectral gap of the oriented interchange process on $\G$ is the same as that of the process with one oriented marble}. Here, we have considered a connected graph to ensure the irreducibility of the associated (continuous-time) random walk. The formal statement is given below.
\begin{thm}\label{thm:main_thm}
	Let $G$ be a finite group, and $\G=(V,E)$ be any connected graph with vertex set $V=[n]$. Consider the oriented interchange process on $\G$ from Definition \ref{def:IP_Ori-Par}. If $\alpha_g=\alpha_{g^{-1}}$ for all $g\in G$, then the spectral gap of the oriented interchange process on $\G$ is the same as the spectral gap of the continuous-time lamplighter random walk on $\G$ with $G$-valued lamps.
\end{thm}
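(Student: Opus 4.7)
The plan is to mimic the representation-theoretic strategy of Caputo, Liggett, and Richthammer \cite{CLR}, translating it from $S_n$ to the wreath product $G\wr S_n$. First I would write the generator $Q$ of the oriented interchange process as a symmetric element of the group algebra $\mathbb{C}[G\wr S_n]$ acting by left regular representation on $\ell^2(G\wr S_n)$. The hypothesis $\alpha_g=\alpha_{g^{-1}}$, together with the fact that transpositions are self-inverse, ensures that this element is self-adjoint; by Schur's lemma its spectrum is the union, over all irreducible representations $\boldsymbol{\lambda}$ of $G\wr S_n$, of the eigenvalues of $Q$ acting on the corresponding irreducible module.

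Next I would parameterize the irreducibles of $G\wr S_n$ via Clifford theory as tuples $\boldsymbol{\lambda}=(\lambda^{\rho})_{\rho\in\widehat{G}}$ of partitions indexed by $\widehat{G}$ with $\sum_{\rho}|\lambda^{\rho}|=n$. The single-marble walk on $G\times[n]$ corresponds to a $Q$-invariant subspace of $\ell^2(G\wr S_n)$, namely the functions that depend only on the position and orientation of one fixed marble. Decomposing this subspace under $G\wr S_n$, one obtains a small family of ``minimal'' tuples: the trivial one ($\lambda^{\trivial}=(n)$, all other $\lambda^{\rho}$ empty), the tuple with $\lambda^{\trivial}=(n-1,1)$, and the tuples with $\lambda^{\trivial}=(n-1)$ and $\lambda^{\rho}=(1)$ for some non-trivial $\rho$. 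Invariance of this subspace immediately gives the inequality $\text{gap}(G\wr S_n)\le \text{gap}(G\times[n])$, and reduces the theorem to the reverse inequality: every non-trivial eigenvalue of $Q$ on a \emph{non-minimal} tuple is at least the single-marble spectral gap.

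I would prove this reverse inequality by induction on $n$. Restricting $Q$ to the subalgebra generated by edges and vertices not involving vertex $n$ embeds its spectral behaviour into that of a copy of $G\wr S_{n-1}$; branching of $G\wr S_n$-irreducibles to $G\wr S_{n-1}$-irreducibles is governed by removing a single box from one of the $\lambda^{\rho}$, exactly analogous to the classical case. The main obstacle, and where the bulk of the work lies, is extending the operator ``octopus'' inequality of \cite{CLR} to the wreath-product setting: the vertex generators $(g)^{(w)}$ contribute additional self-adjoint terms that act through the regular representation of $G$ on each strand of $\boldsymbol{\lambda}$, and one must show that on every non-minimal tuple these terms combine with the transposition terms to yield an operator bounded below by the minimal-tuple eigenvalue. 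The symmetry $\alpha_g=\alpha_{g^{-1}}$ is used precisely to keep the vertex contribution Hermitian with a tractable spectrum, allowing it to be absorbed into the induction and combined with the CLR comparison to produce the matching lower bound and hence the claimed equality of spectral gaps.
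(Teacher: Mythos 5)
Your overall strategy is the same as the paper's (easy direction via the decomposition of the single-marble space into the ``minimal'' irreducibles, hard direction by induction on $n$ through an octopus-type inequality and the branching rule), but as written the induction does not close, and the gap is concrete. Your inductive step ``restrict $Q$ to the subalgebra generated by edges and vertices not involving vertex $n$'' is not the right reduction: one must replace $\mathcal{G}_n$ by a projected element $\vartheta\mathcal{G}_n\in\mathbb{C}[G\wr S_{n-1}]$ in which the deleted star at vertex $n$ is redistributed over the remaining pairs with the CLR weights $x_{u,v}+\frac{x_{u,n}x_{v,n}}{x_{1,n}+\cdots+x_{n-1,n}}$ (this is what keeps the reduced graph connected and what the octopus inequality actually controls). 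More importantly, even granting the octopus inequality and the induction hypothesis $\psi_{G\wr S_{n-1}}(\vartheta\mathcal{G}_n)=\psi_{G\wr S_{n-1}}(\vartheta\mathcal{G}_n,\mathcal{R}_{n-1})$, you still need the level-to-level comparison of \emph{single-marble} gaps, $\psi_{G\wr S_n}(\mathcal{G}_n,\mathcal{R}_n)\le\psi_{G\wr S_{n-1}}(\vartheta\mathcal{G}_n,\mathcal{R}_{n-1})$, to convert the recursive bound into the desired one; in the paper this is a separate theorem proved by writing the minimal-representation operators explicitly as $M_n\otimes I+\mathrm{diag}(y_1,\dots,y_n)\otimes\mathrm{D}(\sigma_j)$ and using a rank-one interlacing argument (plus the normalization $y_n=\min_i y_i$). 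Your outline contains no analogue of this step, nor a base case ($n=2$ requires its own explicit computation over all Young $G$-diagrams with two boxes).

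You also misplace where the new work lies. Extending the octopus inequality to $G\wr S_n$ is essentially free: CLR's inequality for $S_n$ is pushed forward through the embedding $\pi\mapsto\pi_G$ using the facts that $\Gamma(\mathscr{H})\subset\Gamma(\group)$ for a subgroup $\mathscr{H}$, that symmetric non-negative elements (in particular the vertex terms $\sum_w y_w\sum_g\alpha_g(g)^{(w)}$) lie in $\Gamma(G\wr S_n)$, and that $\Gamma$ is closed under positive combinations; the vertex generators never need to be ``absorbed'' into a new octopus-type estimate. The genuinely new ingredients are instead the character computation decomposing $\mathbb{C}[G\times[n]]$ into $V^{\mathscr{M}_0}\oplus\bigoplus_j d_j V^{\mathscr{M}_j}$, the identification (via the branching rule) of these as exactly the irreducibles whose restriction to $G\wr S_{n-1}$ contains the trivial representation — which is what a Cesi-type recursive inequality needs — and the explicit spectral comparison on those minimal representations described above. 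Without these pieces your reverse inequality remains an assertion rather than a proof.
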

The proof will be given in Section \ref{sec:main_proof}. In upcoming work, we plan to explore the direction of ``long cycle formation in the oriented interchange process" to extend Kozma and Alons's result \cite{AK} to $G\wr S_n$ -- which necessarily requires Theorem \ref{thm:main_thm}. The main question of this article appeared in a personal communication with Gady Kozma.

Let us now discuss Aldous' spectral gap conjecture in the group algebra setting. The \emph{group algebra} $\mathbb{C}[\group]$ on a finite group $\group$ can be defined as the complex vector space consisting of all formal linear combinations of the elements of $\group$ with coefficients from $\mathbb{C}$. We note that $\group$ is a natural basis of $\mathbb{C}[\group]$. Given an action of $\group$ on a set $\mathscr{X}$, there is a canonical extension to the group algebra $\mathbb{C}[\group]$, which linearly extends the group action to $\mathbb{C}[\group]$. If $\mathbb{C}[\mathscr{X}]$ denotes the complex vector space of all formal linear combinations of the elements of $\mathscr{X}$, then the action of any group algebra element on $\mathscr{X}$ can be thought of as an operator on $\mathbb{C}[\mathscr{X}]$. The Aldous' spectral gap conjecture in the group algebra setting can be stated as follows: \emph{Let $(u,v)$ denote the transposition in $S_n$ interchanging $u$ and $v\;(1\leq u<v\leq n)$, and $\1$ denote the identity element of $S_n$. Consider the group algebra element 
\[\mathcal{S}=\displaystyle\sum_{1\leq u<v\leq n}x_{u,v}(u,v)\in\mathbb{C}[S_n],\;\;x_{u,v}\geq 0,\] 
such that $\Supp(\mathcal{S}):=\{(u,v):x_{u,v}>0,1\leq u<v\leq n\}$ generates $S_n$. Then, the smallest positive eigenvalue of the (left) multiplication action of 
\[\Delta(\mathcal{S})=\sum_{1\leq u<v\leq n}x_{u,v}\left(\1-(u,v)\right)\in\mathbb{C}[S_n]\]
on $S_n$ is the same as the smallest positive eigenvalue of the natural action of $\Delta(\mathcal{S})$ on the set $[n]$}. 

In recent work, Cesi \cite{Cesi} has proved an analogue of the Aldous' spectral gap conjecture for the \emph{hyperoctahedral group}, denoted $B_n$, defined as the group of bijections $\pi_B$ on $[\pm n]:=\{-n,\dots,-1,1,\dots,n\}$ satisfying $\pi_B(-i)=-\pi_B(i)$ for all $i\in[\pm n]$. Here, the group operation is the composition of bijective mappings. Any elements $\pi_B$ of $B_n$ is uniquely determined by its images on the set $[n]$ as $\pi_B(-i)=-\pi_B(i)$ for all $i\in[\pm n]$. Therefore, we may write $\pi_B$ in the window notation as $\pi_B=[\pi_B(1),\dots,\pi_B(n)]$. The hyperoctahedral group is isomorphic to $S_2\wr S_n$; an isomorphism can be defined on the generators as follows:
\begin{align*}
	(-j,j)_B:=[1,\dots,j-1,&\;\underset{\uparrow}{-j},j+1\dots,n] \longleftrightarrow (\1,\dots,\1,\;\underset{\uparrow}{(1,2)},\1,\dots,\1;\1),\text{ and }\\[-1ex]
	&j\text{th position.}\hspace*{3.5cm}j\text{th position.}\\&\\
	(i,j)_B:=[1,\dots,i-1,&\;\underset{\uparrow}{j},i+1\dots,j-1,\;\underset{\uparrow}{i},j+1\dots,n] \longleftrightarrow (\1,\dots,\1;(i,j)).\\[-1ex]
	&i\text{th position.}\hspace*{1cm}j\text{th position.}
\end{align*}
In this paper, we generalise Cesi's result \cite[Theorem 1.2]{Cesi} from $S_2\wr S_n$ to $G\wr S_n$; this is immediate from the statement of Theorem \ref{thm:main_thm} in the group algebra setting.

We now recall the definition of the wreath product $G\wr S_n$ of the finite group $G$ with $S_n$. The elements of $G\wr S_n$ are of the form $\left(g_1,\dots,g_n;\pi\right)\in G^n\times S_n$, and the group operation is given by
\[\left(g_1,\dots,g_n;\pi\right)\cdot\left(g_1',\dots,g_n';\sigma\right)=\left(g_1 g_{\pi^{-1}(1)}',\dots,g_n g_{\pi^{-1}(n)}';\pi\sigma\right).\]
Let us now set some notations that we use throughout the article. We denote the identity element of $G$ by $\Gid$, and the identity permutation in $S_n$ by $\1 $. Thus, $\Gnid=(\Gid,\dots,\Gid;\1)$ is the identity element of $G\wr S_n$. We denote $(\Gid,\dots,\Gid;\pi)$ simply by $\pi_G$. Also, for $1\leq j\leq n$, we use the notation $\left(g\right)^{(j)}$, to denote 
\begin{align*}
	(\Gid,\dots,\Gid,\;&\underset{\uparrow}{g},\Gid,\dots,\Gid;\1).\\[-1ex]
	&j\text{th position.}
\end{align*}
Let us also recall a natural action of $G\wr S_n$ on the set $G\times [n]$, defined by
\begin{equation}\label{eq:grp_action}
	\left(g_1,\dots,g_n;\pi\right)\cdot (h,i)=(g_{\pi(i)}\cdot h,\pi(i))\text{ for }	\left(g_1,\dots,g_n;\pi\right)\in G\wr S_n,\;(h,i)\in G\times [n].
\end{equation}
We are now in the position to state Theorem \ref{thm:main_thm} in the group algebra setting as follows: \emph{Let
	\begin{equation}\label{eq:group_alg_sett}
		\mathcal{G}_n:=\sum_{1\leq u<v\leq n}x_{u,v}(u,v)_G+\sum_{w=1}^{n}y_w\sum_{g\in G}\alpha_g\left(g\right)^{(w)}\in\mathbb{C}[G\wr S_n],
	\end{equation}
	$x_{u,v}\geq 0, y_w> 0$, and $\alpha_g=\alpha_{g^{-1}}>0$ for all $g\in G$, be such that 
	\begin{equation}\label{eq:generating-cond}
		\Supp(\mathcal{G}_n)=\big\{(u,v)_G,\;\left(g\right)^{(w)}:x_{u,v},\;\alpha_g>0,\;1\leq u<v\leq n,\;1\leq w\leq n,\;g\in G\big\}
	\end{equation} 
generates the group $G\wr S_n$. Then the smallest positive eigenvalue of the (left) multiplication action of 
\[\Delta(\mathcal{G}_n):=\sum_{1\leq u<v\leq n}x_{u,v}\left(\Gnid-(u,v)_G\right)+\sum_{w=1}^{n}y_w\sum_{g\in G}\alpha_g\left(\Gnid-\left(g\right)^{(w)}\right)\in\mathbb{C}[G\wr S_n]\]
on $G\wr S_n$ is the same as the smallest positive eigenvalues of the action \eqref{eq:grp_action} of $\Delta(\mathcal{G}_n)$ on the sets $G\times [n]$}. 

On a purely algebraic note, it is common to ask the question whether Theorem \ref{thm:main_thm} is true if we consider the reflections of type
\begin{align}\label{eq:gen_refl}
	(\Gid,\dots,\Gid,&\;\underset{\uparrow}{g},\Gid,\dots,\Gid,\;\underset{\uparrow}{g^{-1}},\Gid,\dots,\Gid;(i,j)),\;1\leq i<j\leq n,g\in G\\[-1ex]
	&\hspace*{-1cm}i\text{th position.}\hspace*{0.5cm}j\text{th position.}\nonumber
\end{align}
in the group algebra element \eqref{eq:group_alg_sett}. We explore this direction in Section \ref{sec:examples} with an example.

\emph{The organisation of this paper is as follows: We will devote Section \ref{sec:Preliminaries} to the necessary background on the representation theory of the finite group and set some notations. In Section \ref{sec:GwrSn}, we discuss the representation theory of $G\wr S_n$ and prepare the platform for the proof of Theorem \ref{thm:main_thm}. We will prove Theorem \ref{thm:main_thm} in Section \ref{sec:main_proof}. Finally, in Section \ref{sec:examples}, we will provide an example showing that Theorem \ref{thm:main_thm} do not hold if we consider reflections of the form \eqref{eq:gen_refl} in the group algebra element \eqref{eq:group_alg_sett} and state an open question for further study}.
\section{Notation and preliminaries}\label{sec:Preliminaries}
We divide this section into two subsections. In Subsection \ref{subsec:repn._theory_bkground}, we briefly recall some standard definitions and facts from the representation theory of finite groups to make the article self-contained; for more details, see \cite{Sagan,Serre}. In Subsection \ref{subsec:notations}, we set some notations for the rest of the paper.
\subsection{Representation theory background}\label{subsec:repn._theory_bkground}
Let $\V$ be a finite-dimensional complex vector space and GL$(\V)$ be the group of all invertible linear operators from $\V$ to itself under the composition of linear mappings.  Unless otherwise stated, all the vector spaces considered in this paper are finite-dimensional. Elements of GL$(\V)$ can be thought of as invertible matrices over $\mathbb{C}$. Let $\group$ be a finite group. Let $I_{\V}$ denote the identity element of GL$(\V)$ (i.e. the identity operator on $\V$), and $\mathfrak{e}$ denote the identity element of $\group$. A (complex) \emph{linear representation} $(\rho,\V)$ of $\group$ is a homomorphism $\rho:\group\rightarrow \text{GL}(\V)$, i.e., \[\rho(\mathfrak{g}_1\mathfrak{g}_2)=\rho(\mathfrak{g}_1)\rho(\mathfrak{g}_2)\text{ for all }\mathfrak{g}_1,\;\mathfrak{g}_2\in \group.\]
In particular, $\rho(\mathfrak{e})=I_{\V}$ and $\rho(\mathfrak{g}^{-1})=\rho(\mathfrak{g})^{-1},\;\mathfrak{g}\in \group$. The representation space $\V$ is called the \emph{$\group$-module} corresponding to the representation $\rho$. Given $\rho$, we simply say $\V$ is a representation of $\group$. Two useful examples are listed below:
\begin{itemize}
	\item Let $\V$ be one-dimensional. Then the representation $\trivial:\group\rightarrow \text{GL}(\V)$ defined by $\trivial(\mathfrak{g})\mapsto(v\mapsto v)$ for all $v\in \V$ and $g\in \group$ is known as the \emph{trivial representation} of $\group$. An alternative definition says that $\trivial(\mathfrak{g})=1$ for all $\mathfrak{g}\in\group$, as GL$(\V)$ is isomorphic to the multiplicative group of non-zero complex numbers when $\V$ is one-dimensional.
	\item Recall that the group algebra $\mathbb{C}[\group]=\{\sum_{\mathfrak{g}}c_\mathfrak{g}\mathfrak{g}\mid c_\mathfrak{g}\in\mathbb{C},\; \mathfrak{g}\in \group\}$ is the complex vector space of all formal linear combinations of the elements of $\group$ with complex coefficients. Then $\mathfrak{L}:\group\longrightarrow \text{GL}(\mathbb{C}[\group])$, the \emph{left regular representation} of $\group$ is defined by 
	\[\mathfrak{L}(\mathfrak{g})\left(\sum_{\mathfrak{h}\in\group}C_\mathfrak{h}\mathfrak{h}\right)=\sum_{\mathfrak{h}\in\group}C_\mathfrak{h}\mathfrak{g}\mathfrak{h},\; C_\mathfrak{h}\in\mathbb{C}.\]
	From now on, we denote the left regular representation of $G$ by $L$ and that of $G\wr S_n$ by $\mathscr{L}$. Similarly, the \emph{right regular representation} of $\group$, denoted $\mathfrak{R}:\group\longrightarrow \text{GL}(\mathbb{C}[\group])$, defined by 
	$\mathfrak{R}(\mathfrak{g})\left(\sum_{\mathfrak{h}\in\group}C_\mathfrak{h}\mathfrak{h}\right)=\sum_{\mathfrak{h}\in\group}C_\mathfrak{h}\mathfrak{h}\mathfrak{g}^{-1},\; C_\mathfrak{h}\in\mathbb{C}.$
\end{itemize}
The dimension of the vector space $\V$ is said to be the \emph{dimension} of the representation $\rho$ and is denoted by $d_{\rho}$. The trace of the matrix $\rho(\mathfrak{g})$ is said to be the \emph{character} value of $\rho$ at $\mathfrak{g}$ and is denoted by $\chi^{\rho}(\mathfrak{g})$. 
We have $\chi^{\rho}(\mathfrak{e})=d_{\rho}$, and $\chi^{\rho}(\mathfrak{g}^{-1})=\overline{\chi^{\rho}(\mathfrak{g})}$, the complex conjugate of $\chi^{\rho}(\mathfrak{g})$. A vector subspace $\W$ of $\V$ is said to be \emph{stable} (or \emph{invariant}) under $\rho$ if $\rho(\mathfrak{g})\left(\W\right)\subset \W$ for all $\mathfrak{g}$ in $\group$. The representation $\rho$ is \emph{irreducible} if $\V$ has no non-trivial proper stable subspace. For example, the trivial representation defined above is irreducible. Two representations $(\rho_1,\V_1)$ and $(\rho_2,\V_2)$ of $\group$ are said to be \emph{isomorphic} if there exists an invertible linear map $T:\V_1\rightarrow \V_2$ such that $T\circ\rho_1(\mathfrak{g})=\rho_2(\mathfrak{g})\circ T$ for all $\mathfrak{g}\in\group$; this is denoted by $\V_1\cong_{\group}\V_2$ or simply by $\V_1\cong \V_2$. 
\emph{Two representations of a finite group are isomorphic if and only if they have the same characters} \cite[Corollary 2]{Serre}. The \emph{direct sum of the representations} $(\rho_1,\V_1)$ and $(\rho_2,\V_2)$ is the representation $(\rho_1\oplus\rho_2):\group\rightarrow \text{GL}(\V_1\oplus \V_2)$ defined by, $(\rho_1\oplus\rho_2)(\mathfrak{g})(v_1\oplus v_2)=\rho_1(\mathfrak{g})(v_1)\oplus\rho_2(\mathfrak{g})(v_2)$ for $v_1\in \V_1,v_2\in \V_2$ and $\mathfrak{g}\in\group$.
The character of $\rho_1\oplus\rho_2$, denoted $\chi^{\rho_1\oplus\rho_2}$, given by $\chi^{\rho_1\oplus\rho_2}=\chi^{\rho_1}+\chi^{\rho_2}$. \emph{Every $($complex$)$ linear representation is a direct sum of irreducible representations} (\cite[Theorem 2 (Maschke's theorem)]{Serre}). Moreover, this decomposition is unique up to isomorphism of representation. Let $\group_1$ and $\group_2$ be two groups. Also, let $\V_1\otimes \V_2$ denote the tensor product of the (complex) vector spaces $\V_1$ and $\V_2$. Then the \emph{(external) tensor product of two representations} $\rho_1: \group_1\rightarrow \text{GL}(\V_1)$ and $\rho_2: \group_2\rightarrow \text{GL}(\V_2)$ is a representation of the direct product $\group_1\times\group_2$, denoted $(\rho_1\otimes\rho_2,\V_1\otimes \V_2)$, defined by,
\[(\rho_1\otimes\rho_2)(\mathfrak{g}_1,\mathfrak{g}_2)(v_1\otimes v_2)=\rho_1(\mathfrak{g}_1)(v_1)\otimes\rho_2(\mathfrak{g}_2)(v_2) \text{ for }v_i\in \V_i\text{ and }\mathfrak{g}_i\in\group_i,\;i=1,2.\]
The character $\chi^{\rho_1\otimes\rho_2}$ of $\rho_1\otimes\rho_2$ is given by $\chi^{\rho_1\otimes\rho_2}=\chi^{\rho_1}\chi^{\rho_2}$. \emph{The set of all irreducible $\group_1\times\group_2$-modules is given by $\{\V_1\otimes\V_2:\V_i\text{ is an irreducible }\group_i\text{-module},\; i=1,2\}$ \cite[Theorem 10]{Serre}}. Schur's lemma says that \emph{If a group algebra element $\mathfrak{g}\in\mathbb{C}[G]$ commutes with every element of the group $G$, then $\mathfrak{g}$ acts as a scalar on the irreducible $G$-modules} \cite[Proposition 5]{Serre}. We now define an inner product on the space of complex-valued functions defined on the finite group $\group$. Let $\phi,\psi:\group\rightarrow \mathbb{C}$ be two complex values functions defined on $\group$. Then the inner product $\langle\cdot,\cdot\rangle$ is defined by
\begin{equation}\label{eq:ChIP}
	\langle\phi,\psi\rangle:=\frac{1}{|\group|}\displaystyle\sum_{\mathfrak{g}\in \group}\phi(\mathfrak{g})\psi(\mathfrak{g}^{-1}).
\end{equation}
\begin{thm}[{\cite[Theorem 6]{Serre}}]\label{thm:ch2_irreducible_character_as_ONB}
	The characters corresponding to the non-isomorphic irreducible representations of $\group$ form an $\langle\cdot,\cdot\rangle$-orthonormal basis of the complex vector space of the class functions of $\group$.
\end{thm}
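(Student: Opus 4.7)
I plan to establish the statement in two parts: (i) the irreducible characters are pairwise orthonormal with respect to $\langle\cdot,\cdot\rangle$, and (ii) they span the complex vector space of class functions on $\group$. Both parts rest on Schur's lemma and on Maschke's theorem, already recalled above.

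For (i), I would go through the Schur orthogonality relations for matrix coefficients. Given two irreducible representations $(\rho_1,\V_1)$ and $(\rho_2,\V_2)$ and any linear map $T:\V_1\to\V_2$, form the averaged operator
\[
T^{\#}:=\frac{1}{|\group|}\sum_{\mathfrak{g}\in\group}\rho_2(\mathfrak{g}^{-1})\,T\,\rho_1(\mathfrak{g}),
\]
which is $\group$-equivariant from $\V_1$ to $\V_2$. If $\V_1\not\cong\V_2$, Schur's lemma forces $T^{\#}=0$; if $\rho_1=\rho_2$, it forces $T^{\#}=\bigl(\Tr(T)/d_{\rho_1}\bigr)I_{\V_1}$. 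Choosing $T$ to be an elementary matrix in fixed unitary bases and reading off a matrix entry yields the Schur orthogonality relations for the matrix coefficients of $\rho_1$ and $\rho_2$. Summing the diagonal entries then gives $\langle\chi^{\rho_1},\chi^{\rho_2}\rangle=\delta_{\rho_1,\rho_2}$, using the identity $\chi^{\rho}(\mathfrak{g}^{-1})=\overline{\chi^{\rho}(\mathfrak{g})}$ already recorded, which follows because one may always average a Hermitian form to put $\rho$ in unitary form.

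For (ii), I would argue by contrapositive: suppose $f:\group\to\mathbb{C}$ is a class function with $\langle f,\chi^{\rho}\rangle=0$ for every irreducible $\rho$, and show that $f\equiv 0$. For each irreducible $(\rho,\V)$, consider
\[
T_{\rho}:=\sum_{\mathfrak{g}\in\group}f(\mathfrak{g}^{-1})\,\rho(\mathfrak{g})\in\mathrm{End}(\V).
\]
Because $f$ is a class function, a direct change of variables shows that $T_{\rho}$ commutes with every $\rho(\mathfrak{h})$, so Schur's lemma gives $T_{\rho}=\lambda_{\rho}I_{\V}$. Taking traces and substituting $\mathfrak{g}\mapsto\mathfrak{g}^{-1}$ yields $\lambda_{\rho}\,d_{\rho}=|\group|\,\langle f,\chi^{\rho}\rangle=0$, so $T_{\rho}=0$ on every irreducible. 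Maschke's theorem then lets me extend this to an arbitrary representation, in particular to the left regular representation $\mathfrak{L}$. Applying the resulting operator to the basis vector $\mathfrak{e}\in\mathbb{C}[\group]$ gives $\sum_{\mathfrak{g}}f(\mathfrak{g}^{-1})\mathfrak{g}=0$ in $\mathbb{C}[\group]$, and linear independence of group elements forces $f\equiv 0$.

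I expect part (i) to be essentially formal once Schur's lemma is in hand. The main obstacle is part (ii): orthogonality alone only yields linear independence (hence finitely many irreducible characters, bounded by the dimension of the class-function space), and the real work is ruling out any extra class function orthogonal to all of them. This is exactly where the passage from ``vanishing on every irreducible'' to ``vanishing on the regular representation'' via Maschke is indispensable, and that step is the crux of the argument.
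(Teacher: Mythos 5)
The paper does not prove this statement itself; it is quoted directly from the cited reference (Serre, Theorem 6), and your argument is precisely the classical proof given there: Schur orthogonality of matrix coefficients for orthonormality, and the operator $T_{\rho}=\sum_{\mathfrak{g}}f(\mathfrak{g}^{-1})\rho(\mathfrak{g})$ annihilating the regular representation to show a class function orthogonal to all irreducible characters vanishes. Your proposal is correct and takes essentially the same approach as the source the paper relies on.
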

Let $\mathscr{H}$ be a subgroup of $\group$. The \emph{restriction} of the representation $\rho$ to $\mathscr{H}$ is denoted by $\rho\downarrow^\group_\mathscr{H}$ and is defined by $\rho\downarrow^\group_\mathscr{H}(\mathfrak{h}):=\rho(\mathfrak{h})$ for all $\mathfrak{h}\in \mathscr{H}$. If $\chi^{\rho}$ is the character of $\rho$, then the character of the restriction 
$\rho\downarrow^\group_\mathscr{H}$ is denoted by $\chi^\rho\downarrow^\group_\mathscr{H}$.
Let $\group$ be a finite group, and $\widehat{\group}$ be the set of equivalence classes (two representations are equivalent if they are isomorphic) of irreducible representations of $\group$. Then, from Theorem \ref{thm:ch2_irreducible_character_as_ONB}, the number of irreducible representations of a finite group is equal to the number of its conjugacy classes. If $\V\cong m_1\W_1\oplus\cdots\oplus m_{\ell}\W_{\ell}$ is the decomposition of the representation $(\rho,\V)$ into irreducible representations of $\rho$, then $\langle\chi^{\rho},\chi^{\rho}\rangle=m_1^2+\cdots+m_{\ell}^2$. Moreover, if $\chi^i$ denotes the irreducible character of $\W_i,\;1\leq i\leq\ell$, then $m_i=\langle\chi,\chi^i\rangle$ is called the multiplicity of $\W_i$ in the decomposition of $\V$. \emph{The regular (true for both left and right) representation of $\group$ decomposes into irreducible representations with multiplicity equal to their respective dimensions} \cite[p. 18, Corollary 1]{Serre}. Thus we have,
\begin{equation}\label{eq:Group_alg._decom.}
	\mathbb{C}[\group]\cong\underset{\rho\in\widehat{\group}}{\oplus}\;d_{\rho}\V^{\rho},
\end{equation}
where $\V^{\rho}$ is the irreducible $\group$-module corresponding to $\rho\in\widehat{\group}$ with dimension $d_{\rho}$. Also, from Theorem \ref{thm:ch2_irreducible_character_as_ONB} and \cite[Proposition 5]{Serre}, we have $|\group|=\displaystyle\sum_{\rho\in \widehat{\group}}d_{\rho}^2$.
\subsection{Continuous-time random walks on finite groups and the spectral gap}\label{subsec:notations}
A \emph{continuous-time random walk on a finite group $\group$ generated by a  rate function $\rate$} is the continuous-time Markov chain with state space $\group$ and the infinitesimal generator 
\[Q_{\group}(\rate):=\left(\rate(yx^{-1})-\delta_{x,y}\sum_{\mathfrak{g}\in \group}\rate(\mathfrak{g})\right)_{x,y\in\group},\]
where $\delta_{*,*}$ is the \emph{Kronecker delta}. Therefore, the transition from $x$ to $y$ happens with rate $\rate(yx^{-1})$, $x,y\in \group$. It can be easily seen that $-Q_{\group}(\rate)$ is the matrix of the action of
\[\displaystyle\sum_{\mathfrak{g}\in\group}\rate(\mathfrak{g})\left(\mathfrak{e}-\mathfrak{g}\right)\in\mathbb{C}[\group]\]
on $\mathbb{C}[\group]$ by multiplication on the left, with respect to the basis $\group$. Let us list down the eigenvalues of $-Q_{\group}(\rate)$, with possible repetitions according to their multiplicity, in weakly increasing order as follows:
\[0=\lambda_1(-Q_{\group}(\rate))\leq \lambda_2(-Q_{\group}(\rate))\leq\cdots\leq\lambda_{|\group|}(-Q_{\group}(\rate))\]
The random walk is irreducible \emph{if and only if the support of $\rate$, i.e., $\{\mathfrak{g}:\rate(\mathfrak{g})\neq 0\}$ generates the group $\group$} \cite[Proposition 2.3]{S}. In this case, the uniform distribution is the \emph{stationary} distribution of the random walk, and $\lambda_2(-Q_{\group}(\rate))>0$. The \emph{spectral gap} of the irreducible continuous-time random walk on $\group$ generated by $\rate$, denoted $\lambda_{\star}(-Q_{\group}(\rate))$, defined by $\lambda_{\star}(-Q_{\group}(\rate)):=\lambda_2(-Q_{\group}(\rate))$. We now set some useful notations for the rest of the paper.

Recall the group algebra $\mathbb{C}[\group]$, and set $\mathcal{W}=\sum_{\mathfrak{g}\in\group}\mathcal{W}_\mathfrak{g}\mathfrak{g}\in\mathbb{C}[\group]$. Then, the \emph{support} of $\mathcal{W}$, denoted $\Supp(\mathcal{W})$, defined by $\Supp(\mathcal{W}):=\{\mathfrak{g}\in\group:\mathcal{W}_\mathfrak{g}\neq 0\}$. Let us now define a canonical involution on $\mathbb{C}[\group]$ as follows:
\[\mathcal{W}=\sum_{\mathfrak{g}\in\group}\mathcal{W}_\mathfrak{g}\mathfrak{g}\mapsto\mathcal{W}^{\dagger}:=\sum_{\mathfrak{g}\in\group}\overline{\mathcal{W}}_\mathfrak{g}\mathfrak{g}^{-1},\text{ here }\overline{\mathcal{W}}_\mathfrak{g}\text{ is the complex conjugate of }\mathcal{W}_\mathfrak{g}.\]
The group algebra element $\mathcal{W}\in\mathbb{C}[\group]$ is called \emph{symmetric} if $\mathcal{W}=\mathcal{W}^{\dagger}$, and it is called \emph{non-negative} if $\mathcal{W}_{\mathfrak{g}}\geq 0$ for all $\mathfrak{g}\in\group$. Let us set
\begin{align}
	\mathbb{C}[\group]^{(\s)}&:=\{\mathcal{W}\in\mathbb{C}[\group]:\mathcal{W}\text{ is symmetric}\},\label{eq:Re-Sym_Gp-Alg0}\\
	\mathbb{R}_+[\group]^{(\s)}&:=\{\mathcal{W}\in\mathbb{C}[\group]:\mathcal{W}\text{ is symmetric and non-negative}\}.\label{eq:Re-Sym_Gp-Alg}
\end{align} 
Given a representation $(\rho,\V)$ of $\group$, we define the operator $\Delta_{\group}(\mathcal{W},\rho)$ on $\V$ as follows:
\begin{equation}\label{eq:delta-op}
	\Delta_{\group}(\mathcal{W},\rho):=\sum_{\mathfrak{g}\in\group}\mathcal{W}_\mathfrak{g}\left(I_{\V}-\rho(\mathfrak{g})\right).
\end{equation}
The eigenvalues of the operator $\Delta_{\group}(\mathcal{W},\rho)$ are real if $\mathcal{W}$ is symmetric, the eigenvalues are real non-negative when $\mathcal{W}$ is symmetric and non-negative \cite[Section 2]{Cesi-octopus}. For symmetric $\mathcal{W}\in\mathbb{C}[\group]$, we label the eigenvalues of $\Delta_{\group}(\mathcal{W},\rho)$ in weakly increasing order, with possible repetitions according to their multiplicity, as follows:
\[\lambda_1\left(\Delta_{\group}(\mathcal{W},\rho)\right)\leq \lambda_2\left(\Delta_{\group}(\mathcal{W},\rho)\right)\leq\cdots\leq\lambda_{d_{\rho}}\left(\Delta_{\group}(\mathcal{W},\rho)\right),\]
where $d_{\rho}$ is the dimension of the representation $\rho$. Also, for $\mathcal{W}\in\mathbb{R}_+[\group]^{(\s)}$, the \emph{spectral gap} of the pair $(\mathcal{W},\rho)$, denoted $\psi_{\group}(\mathcal{W},\rho)$, defined as 
\begin{equation}\label{eq:sp-gap-rep}
	\psi_{\group}(\mathcal{W},\rho):=\min\{\lambda>0:\lambda\text{ is an eigenvalue of }\Delta_{\group}(\mathcal{W},\rho)\},
\end{equation}
with the convention that $\min\emptyset=+\infty$. In particular, if the multiplicity of the trivial representation in $\rho$ is $m$, then $\psi_{\group}(\mathcal{W},\rho)\geq\lambda_{m+1}\left(\Delta_{\group}(\mathcal{W},\rho)\right)$ as $\Delta_{\group}(\mathcal{W},\rho)$ has exactly $m$ trivial eigenvalues.  The \emph{spectral gap} of $\mathcal{W}\in\mathbb{R}_+[\group]^{(\s)}$, denoted $\psi_{\group}(\mathcal{W})$, defined as follows:
\[\psi_{\group}(\mathcal{W}):=\inf\{\psi_{\group}(\mathcal{W},\rho):\rho\text{ is a representation of }\group\}.\]
Although the definition of $\psi_{\group}(\mathcal{W})$ uses all the representations of $\group$, it is sufficient to look only at the irreducible representations, thanks to Maschke's theorem. Therefore, we have
\begin{equation}\label{eq:sp-gap-rep1}
	\psi_{\group}(\mathcal{W}):=\min\{\psi_{\group}(\mathcal{W},\rho):\rho\in\widehat{\group}\}.
\end{equation}
Recall that $\mathfrak{L}$ be the left regular representation of $\group$. Therefore, the decomposition \eqref{eq:Group_alg._decom.} implies that $\psi_{\group}(\mathcal{W})=\psi_{\group}(\mathcal{W},\mathfrak{L})\geq\lambda_2\left(\Delta_{\group}(\mathcal{W},\mathfrak{L})\right)$, with equality if and only if $\Supp\left(\mathcal{W}\right)$ generates $\group$, for all $\mathcal{W}\in\mathcal{W}\in\mathbb{R}_+[\group]^{(\s)}$. Also, recall the continuous-time irreducible random walk on $\group$ generated by the rate function $\rate$. Then, we have, $\displaystyle\sum_{\mathfrak{g}\in\group}\rate(\mathfrak{g})\mathfrak{g}\in\mathbb{R}_+[\group]^{(\s)},\;-Q_{\group}(\rate)=\Delta_{\group}\left(\displaystyle\sum_{\mathfrak{g}\in\group}\rate(\mathfrak{g})\mathfrak{g},\mathfrak{L}\right)$, and hence 
	$\psi_{\group}\left(\displaystyle\sum_{\mathfrak{g}\in\group}\rate(\mathfrak{g})\mathfrak{g}\right)=\lambda_{\star}\left(-Q_{\group}(\rate)\right)$.
\section{The representation theory of $G\wr S_n$}\label{sec:GwrSn}
In this section, we prepare the platform for the proof of Theorem \ref{thm:main_thm}. We first revisit the representation theory of the complete monomial group $G\wr S_n$. We will define the necessary concepts and state the useful results; we refer to the exposition \cite{MS} for more details on the representation theory of $G\wr S_n$. We prove some results that will be used in Section \ref{sec:main_proof}. The main goal of this section is to obtain the decomposition of the representation defined 
by the action \eqref{eq:grp_action} into irreducible $G\wr S_n$-modules.

A \emph{partition} of a positive integer $n$, denoted $\eta\vdash n$, defined as a weakly decreasing (finite) sequence $(\eta_1,\cdots,\eta_r)$ of positive integers such that $\sum_{i=1}^{r}\eta_i=n$. The \emph{Young diagram} of the partition $\eta=(\eta_1,\cdots,\eta_r)$ is a left-justified arrangement of $r$ rows of boxes with $\eta_i$ boxes in the $i$th row, $1\leq i\leq r$. For example, there are five partitions of the positive integer $4$ viz. $(4)$, $(3,1)$, $(2,2)$, $(2,1,1)$ and $(1,1,1,1)$; Figure \ref{fig: Yng_diag_with_4_boxes} shows the corresponding Young diagrams.
\begin{figure}[h]
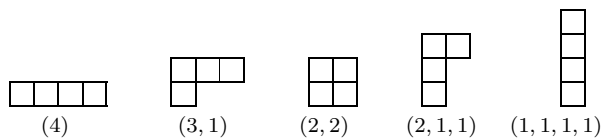

	\centering
	\tiny{
		$\begin{array}{cclll}
		\yng(4)&\hspace{0.5cm}\yng(3,1)& \hspace{0.5cm}\yng(2,2) & \hspace{0.5cm}\yng(2,1,1) & \hspace{0.75cm}\yng(1,1,1,1)\\
		(4)\;&\;\quad(3,1)&\quad\;(2,2)&\quad(2,1,1)&\;(1,1,1,1)
	\end{array}$}
	\caption{Young diagrams with $4$ boxes.}\label{fig: Yng_diag_with_4_boxes}
\end{figure}
 The set of all Young diagrams (there is a unique Young diagram with zero boxes) is denoted by $\y$, and the set of all Young diagrams with $n$ boxes is denoted by $\yn$. For example, elements of $\mathcal{Y}_4$ are shown in Figure \ref{fig: Yng_diag_with_4_boxes}. Given a finite set $X$, we define 
 \[\yx:=\{\mu:\mu\text{ is a map from }X\text{ to }\y\}.\]
 Let $\eta\in\y$, and $|\eta|$ denote the number of boxes in the Young diagram $\eta$. Also, let $n$ be a fixed positive integer. We define $\ynx:=\{\mu\in\yx:||\mu||=n\}$; here we have used $||\mu||:=\sum_{x\in X}|\mu(x)|$. Let $\widehat{G}$ denote the (finite) set of all non-isomorphic irreducible representations of $G$. Given $\sigma\in\widehat{G}$, we denote the corresponding irreducible $G$-module by $W^{\sigma}$. Elements of $\y(\widehat{G})$ are called \emph{Young $G$-diagrams}, and elements of $\yn(\widehat{G})$ are called \emph{Young $G$-diagrams with $n$ boxes}.
\begin{figure}[h]
	\tiny{
	\centering
	$\mu:=\left(\mu(\sigma_1),\mu(\sigma_2),\dots,\mu(\sigma_{10})\right)=\left(\begin{array}{c}\yng(3,2)\end{array}\hspace*{-0.75ex},\begin{array}{c}\yng(1,1)\end{array},\emptyset,\;\emptyset,\;\emptyset,\;\emptyset,\;\emptyset,\begin{array}{c}\yng(1,1)\end{array},\;\emptyset,\begin{array}{c}\yng(1)\end{array}\right)$}
	\caption{An element of $\mathcal{Y}_{10}(\widehat{\mathbb{Z}}_{10})$. Here $\widehat{\mathbb{Z}}_{10}:=\{\sigma_i:1\leq i\leq 10\}$}.\label{fig:Young_G_diagram}
\end{figure}
For example, if $n=10$ and $G=\mathbb{Z}_{10}$ (the additive group of integers modulo $10$), then Figure \ref{fig:Young_G_diagram} shows a Young $\mathbb{Z}_{10}$-diagram with $10$ boxes. Given $\eta\in\y$, a \emph{Young tableau} of shape $\eta$ is obtained by taking the Young diagram $\eta$ and filling its $|\eta|$ boxes (bijectively) with the numbers $1,2,\dots,|\eta|$. A Young tableau is said to be \emph{standard} if the numbers in the boxes strictly increase along each row and each column of the Young diagram of $\eta$. The set of all standard Young tableaux of shape $\eta$ is denoted by $\tab(\eta)$. Elements of $\tab((3,1))$ are listed in Figure \ref{fig: Stab_of_shape_(3,1)}.
\begin{figure}[h]
	\centering
	\tiny{
	$\begin{array}{ccc}
		\young({{\substack{1}}}{{\substack{2}}}{{\substack{3}}},{{\substack{4}}}) &\quad\young({{\substack{1}}}{{\substack{2}}}{{\substack{4}}},{{\substack{3}}}) & \quad \young({{\substack{1}}}{{\substack{3}}}{{\substack{4}}},{{\substack{2}}})
	\end{array}$}
	\caption{Standard Young tableaux of shape $(3,1)$.}\label{fig: Stab_of_shape_(3,1)}
\end{figure}
 Let $\mu\in\y(\widehat{G})$. Then, a \emph{Young $G$-tableau} of shape $\mu$ is obtained by taking the Young $G$-diagram $\mu$ and filling its $||\mu||$ boxes (bijectively) with the numbers $1,2,\dots,||\mu||.$ A Young $G$-tableau is said to be \emph{standard} if the numbers in the boxes strictly increase along each row and each column of all Young diagrams occurring in $\mu$. Given $\mu\in\yn(\widehat{G})$, the set of all standard Young $G$-tableaux of shape $\mu$ is denoted by $\tab_{G}(n,\mu)$. Let us set $\tab_G(n):=\underset{\mu\in\yn(\widehat{G})}{\cup}\tab_{G}(n,\mu)$. For example, an element of $\tab_{\mathbb{Z}_{10}}\left(10,\mu\right)$ is shown in Figure \ref{fig:Young_G_tableau}, the shape $\mu\;\left(\in\y_{10}(\widehat{\mathbb{Z}}_{10})\right)$ is given in Figure \ref{fig:Young_G_diagram}.
\begin{figure}[h]
	\tiny{
	\centering
	$\mu\rightsquigarrow\left(
	\begin{array}{c}\young({{\substack{4}}}{{\substack{6}}}{{\substack{9}}},{{\substack{7}}}{{\substack{10}}})\end{array}\hspace*{-0.75ex}, \begin{array}{c}\young({{\substack{1}}},{{\substack{2}}})\end{array}\hspace*{-0.75ex},\;\emptyset,\;\emptyset,\;\emptyset,\;\emptyset,\;\emptyset, \begin{array}{c}\young({{\substack{3}}},{{\substack{8}}})\end{array}\hspace*{-0.75ex},\;\emptyset,\begin{array}{c}\young({{\substack{5}}})\end{array}
	\right)$}
	\caption{A standard Young $\mathbb{Z}_{10}$-tableaux of shape $\mu$, defined in Figure \ref{fig:Young_G_diagram}.}\label{fig:Young_G_tableau}
\end{figure}
\begin{defn}\label{def:b_(i)+r_T(i)}
	Let $T\in\tab_G(n)$ and $i\in[n]$. \emph{If $i$ appears in the Young diagram $\mu(\sigma)$, where $\mu$ is the shape of $T$ and $\sigma\in\widehat{G}$, we write $r_{T}(i)=\sigma$}. For the example given in Figure \ref{fig:Young_G_tableau}, we have $r_{T}(1)=\sigma_2$, $r_{T}(2)=\sigma_2$, $r_{T}(3)=\sigma_8$, $r_{T}(4)=\sigma_1$, $r_{T}(5)=\sigma_{10}$, $r_{T}(6)=\sigma_{1}$, $r_{T}(7)=\sigma_{1}$, $r_{T}(8)=\sigma_8$, $ r_{T}(9)=\sigma_1$, $r_T(10)=\sigma_1$. The \emph{content} of a box in row $p$ and column $q$ of a Young diagram is the integer $q-p$. \emph{Let $b_T(i)$ be the box in $\mu(\sigma)$, with the number $i$ resides and $c(b_T(i))$ denote the content of the box $b_T(i)$}. For the example given in Figure \ref{fig:Young_G_tableau}, we also have $c(b_T(1))=0$, $c(b_T(2))=-1$, $c(b_T(3))=0$, $c(b_{T}(4))=0$, $c(b_{T}(5))=0$, $c(b_T(6))=1$, $c(b_T(7))=-1$, $c(b_{T}(8))=-1$, $c(b_{T}(9))=2$, $c(b_T(10))=0$.
\end{defn}
The irreducible representation of $G\wr S_n$ can be parametrised by elements of $\mathcal{Y}_n(\widehat{G})$ \cite[Lemma 6.2 and Theorem 6.4]{MS}. Given $\mu\in\y(\widehat{G})$ and $\sigma\in\widehat{G},\;\mu\downarrow_{\sigma}$ denotes the set of all Young $G$-diagrams obtained from $\mu$ by removing one of the inner corners (a box, whose removal leaves a valid Young diagram) in the Young diagram $\mu(\sigma)$; see Figure \ref{fig:branching-diagram} for an example.
\begin{figure}[h]
	\centering
	\tiny{
	$\begin{array}{c}
			\mu\downarrow_{\sigma_1}=\Bigg\{\left(\begin{array}{c}\yng(2,2)\end{array}\hspace*{-0.75ex},\begin{array}{c}\yng(1,1)\end{array},\emptyset,\;\emptyset,\;\emptyset,\;\emptyset,\;\emptyset,\begin{array}{c}\yng(1,1)\end{array},\;\emptyset,\begin{array}{c}\yng(1)\end{array}\right),\left(\begin{array}{c}\yng(3,1)\end{array}\hspace*{-0.75ex},\begin{array}{c}\yng(1,1)\end{array},\emptyset,\;\emptyset,\;\emptyset,\;\emptyset,\;\emptyset,\begin{array}{c}\yng(1,1)\end{array},\;\emptyset,\begin{array}{c}\yng(1)\end{array}\right)\Bigg\}\\
		\mu\downarrow_{\sigma_2}=\Bigg\{\left(\begin{array}{c}\yng(3,2)\end{array}\hspace*{-0.75ex},\begin{array}{c}\yng(1)\end{array},\emptyset,\;\emptyset,\;\emptyset,\;\emptyset,\;\emptyset,\begin{array}{c}\yng(1,1)\end{array},\;\emptyset,\begin{array}{c}\yng(1)\end{array}\right)\Bigg\}\\
			\mu\downarrow_{\sigma_8}=\Bigg\{\left(\begin{array}{c}\yng(3,2)\end{array}\hspace*{-0.75ex},\begin{array}{c}\yng(1,1)\end{array},\emptyset,\;\emptyset,\;\emptyset,\;\emptyset,\;\emptyset,\begin{array}{c}\yng(1)\end{array},\;\emptyset,\begin{array}{c}\yng(1)\end{array}\right)\Bigg\}\\
		\mu\downarrow_{\sigma_{10}}=\Bigg\{\left(\begin{array}{c}\yng(3,2)\end{array}\hspace*{-0.75ex},\begin{array}{c}\yng(1,1)\end{array},\emptyset,\;\emptyset,\;\emptyset,\;\emptyset,\;\emptyset,\begin{array}{c}\yng(1,1)\end{array},\;\emptyset,\;\emptyset\right)\Bigg\}\;\quad\;
	\end{array}$}
	\caption{$\mu\downarrow_{\sigma}$ for non empty $\mu({\sigma})$, where $\mu\left(\in\mathcal{Y}_{10}(\widehat{\mathbb{Z}}_{10})\right)$ is defined in Figure \ref{fig:Young_G_diagram}.}\label{fig:branching-diagram}\vspace*{-1ex}
\end{figure}
The branching rule \cite[Theorem 6.6]{MS} of the pair $G\wr S_{n-1}\subseteq G\wr S_n$ is stated below.
\begin{thm}[{\cite[Theorem 6.6 (Branching rule)]{MS}}]\label{thm:Branching-rule}
	Let $V^{\mu}$ (respectively $V^{\lambda}$) denote the irreducible $G\wr S_n$-module (respectively $G\wr S_{n-1}$-module) indexed by $\mu\in\yn(\widehat{G})$ (respectively $\lambda\in\mathcal{Y}_{n-1}(\widehat{G})$). Then,
	\begin{equation}\label{eq:GwrS-branching}
		V^{\mu}\big\downarrow_{G\wr S_{n-1}}^{G\wr S_{n}}\cong\underset{\sigma\in\widehat{G}:\;\mu(\sigma)\neq\emptyset}{\oplus}\dimension(W^{\sigma})\left(\underset{\lambda\in\mu\downarrow_{\sigma}}{\oplus}V^{\lambda}\right),
	\end{equation}
	where $W^{\sigma}$ is the irreducible $G$-module indexed by $\sigma$. 
\end{thm}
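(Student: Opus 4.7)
The plan is to derive the branching rule by combining the explicit induced-representation description of $V^\mu$ with Mackey's subgroup theorem and the classical branching rule for the symmetric group. First I would recall the construction of $V^\mu$: setting $n_\sigma := |\mu(\sigma)|$ for each $\sigma \in \widehat{G}$, $S_\mu := \prod_\sigma S_{n_\sigma}$ the corresponding Young subgroup of $S_n$, and $G \wr S_\mu := \prod_\sigma (G \wr S_{n_\sigma}) \leq G \wr S_n$, one has
\[
V^\mu \;\cong\; \mathrm{Ind}_{G \wr S_\mu}^{G \wr S_n} U^\mu, \qquad U^\mu := \bigotimes_{\sigma \in \widehat{G}} \bigl[(W^\sigma)^{\otimes n_\sigma} \otimes V^{\mu(\sigma)}\bigr],
\]
where on each factor $G \wr S_{n_\sigma}$ the normal subgroup $G^{n_\sigma}$ acts diagonally on $(W^\sigma)^{\otimes n_\sigma}$, while $S_{n_\sigma}$ acts by simultaneously permuting these tensor factors and through the Specht representation on $V^{\mu(\sigma)}$.

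Next I would apply Mackey's subgroup theorem to decompose $V^\mu \big\downarrow_{G \wr S_{n-1}}^{G \wr S_n}$. Since $G^n \subseteq G \wr S_\mu$ and $G^{n-1} \subseteq G \wr S_{n-1}$ both act trivially on $G \wr S_n / G \wr S_\mu$, the double-coset space $G \wr S_{n-1} \backslash G \wr S_n / G \wr S_\mu$ reduces to $S_{n-1} \backslash S_n / S_\mu$, which is naturally in bijection with $\{\sigma \in \widehat{G} : \mu(\sigma) \neq \emptyset\}$ via the block of $\mu$ receiving the letter $n$. Choosing a representative $x_\sigma$ for each such $\sigma$, one verifies
\[
x_\sigma \bigl(G \wr S_\mu\bigr) x_\sigma^{-1} \cap \bigl(G \wr S_{n-1}\bigr) \;\cong\; G \wr S_{\mu'_\sigma},
\]
where $\mu'_\sigma$ is the shape with $|\mu'_\sigma(\sigma)| = n_\sigma - 1$ and $|\mu'_\sigma(\tau)| = n_\tau$ for $\tau \neq \sigma$.

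The key computation is to restrict $x_\sigma \cdot U^\mu$ to this intersection. The tensor factors with $\tau \neq \sigma$ are untouched; in the $\sigma$-factor $(W^\sigma)^{\otimes n_\sigma} \otimes V^{\mu(\sigma)}$, removing the $G$-action at the position corresponding to $n$ turns the $n_\sigma$-th copy of $W^\sigma$ into a trivial vector space of dimension $\dimension(W^\sigma)$, while the classical $S_n$-branching rule gives $V^{\mu(\sigma)} \big\downarrow^{S_{n_\sigma}}_{S_{n_\sigma-1}} \cong \bigoplus_{\nu \lessdot \mu(\sigma)} V^{\nu}$. Combining these two observations yields
\[
\mathrm{Res}_{G \wr S_{\mu'_\sigma}} (x_\sigma \cdot U^\mu) \;\cong\; \dimension(W^\sigma) \cdot \bigoplus_{\lambda \in \mu \downarrow_\sigma} U^\lambda,
\]
where for each $\lambda \in \mu \downarrow_\sigma$ the subgroup $G \wr S_{\mu'_\sigma}$ coincides with $G \wr S_\lambda$. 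Inducing back up to $G \wr S_{n-1}$ then recovers $V^\lambda$ by its defining realisation, and summing the Mackey contributions over $\sigma$ produces exactly the formula in the statement.

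The main obstacle I foresee is the bookkeeping of two distinct multiplicity sources: the factor $\dimension(W^\sigma)$ arising because the $G$-action at the removed position collapses a $W^\sigma$ tensor factor to a trivial vector space, versus the classical Young multiplicities $|\mu \downarrow_\sigma|$ coming from restriction of the Specht module. Verifying that the choice of coset representative $x_\sigma$ is inconsequential, and that the permutation action on $(W^\sigma)^{\otimes n_\sigma}$ restricts compatibly with the diagonal $G^{n_\sigma}$-action, is essentially a routine Clifford-theoretic check, but must be written down carefully to avoid multiplicity errors.
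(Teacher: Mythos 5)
Your argument is correct in outline and would give a complete proof, but it is worth noting that the paper itself does not prove this statement at all: Theorem \ref{thm:Branching-rule} is imported verbatim from \cite[Theorem 6.6]{MS}, where it is obtained inside the Okounkov--Vershik-style framework developed there (Gelfand--Tsetlin algebras, Young--Jucys--Murphy elements and the analysis of their spectra), which is also the machinery the present paper goes on to use in Theorems \ref{thm:GT-decomposition}--\ref{thm:coxeter_action}. Your route is the classical Clifford--Mackey one: realise $V^{\mu}$ as $\mathrm{Ind}_{G\wr S_{\mu}}^{G\wr S_{n}}U^{\mu}$ with $U^{\mu}=\otimes_{\sigma}\big[(W^{\sigma})^{\otimes n_{\sigma}}\otimes S^{\mu(\sigma)}\big]$, identify the double cosets $(G\wr S_{n-1})\backslash (G\wr S_{n})/(G\wr S_{\mu})$ with $S_{n-1}\backslash S_{n}/S_{\mu}$ (legitimate, since $G^{n}\subseteq G\wr S_{\mu}$ lets every double coset be represented by a pure permutation $x_{\sigma}$, indexed by the block of $\mu$ containing the letter $n$), compute the intersection $x_{\sigma}(G\wr S_{\mu})x_{\sigma}^{-1}\cap(G\wr S_{n-1})\cong G\wr S_{\mu'_{\sigma}}$, and observe that on restriction the copy of $W^{\sigma}$ sitting at the distinguished position loses all group action and becomes a multiplicity space of dimension $\dimension(W^{\sigma})$, while the Specht factor branches classically, so that inducing back up reproduces $\dimension(W^{\sigma})\oplus_{\lambda\in\mu\downarrow_{\sigma}}V^{\lambda}$; summing over $\sigma$ gives \eqref{eq:GwrS-branching}. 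This buys a self-contained, elementary derivation from the symmetric-group branching rule, at the cost of redoing the induced-module construction that \cite{MS} deliberately avoids; conversely, the \cite{MS} approach yields the branching rule together with the Gelfand--Tsetlin bases and the content/character formulas that the rest of this paper actually needs, which a pure Mackey computation does not provide. Two small points to tighten when writing it up: the action of $G^{n_{\sigma}}$ on $(W^{\sigma})^{\otimes n_{\sigma}}$ is coordinate-wise (the $i$th copy of $G$ on the $i$th tensor factor), not ``diagonal'' -- the diagonal action is an action of $G$, not of $G^{n_{\sigma}}$, and with the coordinate-wise action the compatibility with the factor-permuting $S_{n_{\sigma}}$-action is exactly what makes $(W^{\sigma})^{\otimes n_{\sigma}}$ a $G\wr S_{n_{\sigma}}$-module; and you should record explicitly that the coset representatives $x_{\sigma}$ are chosen in $S_{n}$ (trivial $G$-part), since that is what makes the conjugated subgroup again a wreath product over a Young-type subgroup and renders the representative-independence check immediate.
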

To illustrate \eqref{eq:GwrS-branching}, consider $\mu\in\y_{10}(\widehat{\mathbb{Z}}_{10})$ from Figure \ref{fig:Young_G_diagram}, and recall $\mu\downarrow_{\sigma_i}$ $(i=1,2,8,10)$ from Figure \ref{fig:branching-diagram}. Using the same notation for a singleton set and its element, we have
	\[V^{\mu}\big\downarrow^{\mathbb{Z}_{10}\wr S_{10}}_{\mathbb{Z}_{10}\wr S_9}\cong\dimension(W^{\sigma_1})\left(\underset{\lambda\in\mu\downarrow_{\sigma_1}}{\oplus}V^{\lambda}\right)\oplus\dimension(W^{\sigma_2})V^{\mu\downarrow_{\sigma_2}}\oplus\dimension(W^{\sigma_8})V^{\mu\downarrow_{\sigma_8}}\oplus\dimension(W^{\sigma_{10}})V^{\mu\downarrow_{\sigma_{10}}}.\]
Let $\mathcal{H}_{i,n}(G)$ be the subgroup $\{(g_1,\dots,g_n,\pi)\in G\wr S_n:\pi(j)=j\text{ for }i+1\leq j\leq n\}$ of $G\wr S_n$, $0\leq i\leq n$. In particular $\mathcal{H}_{0,n}(G)=\mathcal{H}_{1,n}(G)=G^n$ and $\mathcal{H}_{n,n}(G)=G\wr S_n$. The subgroup $\mathcal{H}_{i,n}(G)$ is isomorphic to $G\wr S_i\times G^{n-i}$ (direct product of $G\wr S_i$ and $G^{n-i}$) 
by the isomorphism
\[(g_1,\dots,g_i,g_{i+1},\dots,g_n;\pi)\in\mathcal{H}_{i,n}(G)\longleftrightarrow\left((g_1,\dots,g_i;\pi),(g_{i+1},\dots,g_n)\right)\in G\wr S_i\times G^{n-i}.\]
Here, we have used the same notation $\pi$ for permutations of $S_i$ and $S_n$, as $\pi\in S_i$ means $\pi(j)=j$ for $i+1\leq j\leq n$. The irreducible $G\wr S_i\times G^{n-i}$-modules are given by the tensor products of the irreducible $G\wr S_i$-modules and the irreducible $G^{n-i}$-modules \cite[Theorem 10]{Serre}. Therefore, we may parametrise the irreducible representations of $\mathcal{H}_{i,n}(G)$ by elements of $\y_i(\widehat{G})\times\widehat{G}^{n-i}$. The branching rule of the pair $\mathcal{H}_{i-1,n}(G)\subseteq \mathcal{H}_{i,n}(G)$ is given as follows: Let $V^{(\mu,\sigma_{i+1},\sigma_{i+2},\dots,\sigma_{n})}$ (respectively $V^{(\lambda,\sigma,\sigma_{i+1},\sigma_{i+2},\dots,\sigma_{n})}$) denote the irreducible $\mathcal{H}_{i,n}(G)$-module (respectively $\mathcal{H}_{i-1,n}(G)$-module) indexed by $(\mu,\sigma_{i+1},\sigma_{i+2},\dots,\sigma_{n})\in\y_i(\widehat{G})\times\widehat{G}^{n-i}$ (respectively $(\lambda,\sigma,\sigma_{i+1},\sigma_{i+2},\dots,\sigma_{n})\in\y_{i-1}(\widehat{G})\times\widehat{G}^{n-i+1}$). Then,
	\begin{equation}\label{eq:H_i,n-branching}
		V^{(\mu,\sigma_{i+1},\sigma_{i+2},\dots,\sigma_{n})}\big\downarrow_{\mathcal{H}_{i-1,n}(G)}^{\mathcal{H}_{i,n}(G)}\cong\underset{\sigma\in\widehat{G}:\;\mu(\sigma)\neq\emptyset}{\oplus}\left(\underset{\lambda\in\mu\downarrow_{\sigma}}{\oplus}V^{(\lambda,\sigma,\sigma_{i+1},\sigma_{i+2},\dots,\sigma_{n})}\right).
	\end{equation}
	In particular, $V^{(\mu)}=V^{\mu}$ (irreducible $G\wr S_n$-module), for $i=n$. To illustrate \eqref{eq:H_i,n-branching}, consider $\mu\in\y_{10}(\widehat{\mathbb{Z}}_{10})$ from Figure \ref{fig:Young_G_diagram}, and recall $\mu\downarrow_{\sigma_i}$ $(i=1,2,8,10)$ from Figure \ref{fig:branching-diagram}. Then
	\[V^{(\mu)}\big\downarrow_{\mathcal{H}_{9,10}(\mathbb{Z}_{10})}^{\mathcal{H}_{10,10}(\mathbb{Z}_{10})}\cong\left(\underset{\lambda\in\mu\downarrow_{\sigma_1}}{\oplus}V^{(\lambda,\sigma_1)}\right)\oplus V^{(\mu\downarrow_{\sigma_2},\sigma_2)}\oplus V^{(\mu\downarrow_{\sigma_8},\sigma_8)}\oplus V^{(\mu\downarrow_{\sigma_{10}},\sigma_{10})}.\]
	Here, we have used the same notation for a singleton set and its element.

Let $\mu\in\widehat{\mathcal{H}_{n,n}}(G)$ and consider the irreducible $\mathcal{H}_{n,n}(G)$-module $V^{\mu}$ (the space for the representation $\mu$). Since the branching is simple (recall \eqref{eq:H_i,n-branching}), the decomposition into irreducible $\mathcal{H}_{n-1,n}(G)$-modules is given by 
\[V^{\mu}=\underset{\lambda}{\oplus}V^{\lambda},\]
where the sum is over all $\lambda\in \widehat{\mathcal{H}_{n-1,n}}(G)$, with $\lambda\nearrow\mu$ (i.e. there is an edge from $\lambda$ to $\mu$ in the branching multi-graph), is canonical. Here we note that $\mu\in\yn(\widehat{G})$ and $\lambda\in\y_{n-1}(\widehat{G})\times\widehat{G}$. Iterating this decomposition of $V^{\mu}$ into irreducible $\mathcal{H}_{1,n}(G)$-submodules, i.e.,
\begin{equation}\label{eq:GZ-}
	V^{\mu}=\underset{T}{\oplus}V_{T},
\end{equation}
where the sum is over all possible chains $T=\mu_1\nearrow\mu_2\nearrow\dots\nearrow\mu_n$ with $\mu_i\in \widehat{\mathcal{H}_{i,n}}(G)$ and $\mu_n=\mu$. We call \eqref{eq:GZ-} the \emph{Gelfand-Tsetlin} decomposition of $V^{\mu}$ and each $V_T$ in \eqref{eq:GZ-} a \emph{Gelfand-Tsetlin} subspace of $V^{\mu}$. We note that if $0\neq v_T\in V_T,\text{ then }\mathbb{C}[\mathcal{H}_{i,n}(G)]v_T=V^{\mu_i}$ from the definition of $V_T$.
\begin{thm}[{\cite[Theorem 6.5]{MS}}]\label{thm:GT-decomposition}
	Let $\mu\in\yn(\widehat{G})$. Then, we may index the Gelfand-Tsetlin subspaces of $V^{\mu}$ by standard Young $G$-tableaux  of shape $\mu$ and write the Gelfand-Tsetlin decomposition as 
	\[V^{\mu}=\underset{T\in\tab_{G}(n,\mu)}{\oplus}V_T,\]
	where each $V_T$ is closed under the action of $G^n$ and as a $G^n$-module, is isomorphic to the irreducible $G^n$-module 
	\[W^{r_T(1)}\otimes W^{r_T(2)}\otimes\dots\otimes W^{r_T(n)}.\]
	Here $W^{r_T(i)}$ is the irreducible $G$-module indexed by $r_T(i)$; recall $r_T(i)$ from Definition \ref{def:b_(i)+r_T(i)}.
\end{thm}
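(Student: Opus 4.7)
The plan is to iterate the branching rule \eqref{eq:H_i,n-branching} for the chain of subgroups
\[
G^n = \mathcal{H}_{1,n}(G)\subseteq \mathcal{H}_{2,n}(G)\subseteq\cdots\subseteq \mathcal{H}_{n,n}(G)=G\wr S_n,
\]
and to interpret the resulting canonical decomposition combinatorially. The key observation is that the branching rule \eqref{eq:H_i,n-branching} is multiplicity-free: different choices of the pair $(\sigma,\lambda)$ with $\mu(\sigma)\neq\emptyset$ and $\lambda\in\mu\downarrow_{\sigma}$ produce irreducible $\mathcal{H}_{i-1,n}(G)$-modules carrying distinct labels $(\lambda,\sigma,\sigma_{i+1},\dots,\sigma_n)$, hence non-isomorphic summands. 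Consequently, the isotypic decomposition at each step is canonical, and iterating it from $i=n$ down to $i=1$ produces a canonical decomposition \eqref{eq:GZ-} of $V^{\mu}$ into one-dimensional (as parameter spaces) summands indexed by chains $T=\mu_1\nearrow\mu_2\nearrow\cdots\nearrow\mu_n=\mu$ with $\mu_i\in \widehat{\mathcal{H}_{i,n}}(G)$.

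Next, I would set up the bijection between such chains $T$ and elements of $\tab_G(n,\mu)$. Reading the chain from top to bottom, the step $\mu_i\searrow\mu_{i-1}$ is, by \eqref{eq:H_i,n-branching}, determined by a choice of $\sigma\in\widehat{G}$ with $\mu_i(\sigma)\neq\emptyset$ in the first coordinate of $\mu_i$ and by a box to remove from $\mu_i(\sigma)$ that leaves a valid Young diagram; label that box by $i$ and set $r_T(i):=\sigma$. This produces a filling of the Young $G$-diagram $\mu$ with the integers $1,2,\dots,n$. The fact that at each step only inner corners are removed translates into the statement that the entries strictly increase along rows and columns of every component $\mu(\sigma)$, so the filling is a standard Young $G$-tableau of shape $\mu$. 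Conversely, any $T\in \tab_G(n,\mu)$ gives, by successive removals of the boxes labelled $n,n-1,\dots,1$, a unique chain; hence the map $T\mapsto V_T$ is a bijection, justifying the indexing $V^{\mu}=\underset{T\in\tab_G(n,\mu)}{\oplus}V_T$.

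Finally, I would identify each $V_T$ as a $G^n$-module. At the bottom of the chain, $\mu_1\in \y_1(\widehat{G})\times\widehat{G}^{\,n-1}$ is of the form $(\delta,r_T(2),r_T(3),\dots,r_T(n))$, where $\delta\in\y_1(\widehat{G})$ is the Young $G$-diagram consisting of a single box located in position $r_T(1)$. Under the isomorphism $\mathcal{H}_{1,n}(G)\cong G\wr S_1\times G^{n-1}\cong G^n$, the irreducible module indexed by $\delta$ is precisely $W^{r_T(1)}$ (the single-box Young diagram at position $\sigma\in\widehat{G}$ corresponds to $W^{\sigma}$), while the other coordinates $r_T(i)\in\widehat{G}$ parameterise $W^{r_T(i)}$ for $2\leq i\leq n$. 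Since irreducible $G^n$-modules are external tensor products of irreducible $G$-modules \cite[Theorem 10]{Serre}, we conclude $V_T\cong W^{r_T(1)}\otimes W^{r_T(2)}\otimes\cdots\otimes W^{r_T(n)}$ as a $G^n$-module, as required.

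The most delicate point, and the one I would treat most carefully, is the translation between the removal of inner corners in the chain and the standardness condition on the tableau, since one must simultaneously keep track of both the character label $r_T(i)\in\widehat{G}$ (which tells us \emph{which} component diagram the box $i$ is removed from) and the ordinary standardness within each component. Everything else is a mechanical iteration of the multiplicity-free branching \eqref{eq:H_i,n-branching} together with the standard tensor decomposition of $G^n$-representations.
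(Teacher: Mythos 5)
Your proposal is correct and follows essentially the same route as the paper's treatment: the paper itself imports this statement from \cite[Theorem 6.5]{MS} without proof, but its construction of the Gelfand--Tsetlin decomposition \eqref{eq:GZ-} by iterating the simple (multiplicity-free) branching \eqref{eq:H_i,n-branching} down the tower $\mathcal{H}_{n,n}(G)\supseteq\cdots\supseteq\mathcal{H}_{1,n}(G)$ is exactly what you carry out, together with the standard bijection between chains in the branching graph and standard Young $G$-tableaux and the identification of the bottom label $(\delta,r_T(2),\dots,r_T(n))$ with $W^{r_T(1)}\otimes W^{r_T(2)}\otimes\cdots\otimes W^{r_T(n)}$. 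I see no gap worth flagging.
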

\begin{thm}[{\cite[Theorem 6.7]{MS}}]\label{thm:dimen of irr G_n-modules}
	Let $\mu\in\yn(\widehat{G})$. Write the elements of $\widehat{G}$ as $\{\sigma_1,\dots,\sigma_t\}$ and set $\mu^{(i)}=\mu(\sigma_i),\;m_i=|\mu^{(i)}|,d_i=\dimension(W^{\sigma_i})$ for each $1\leq i\leq t$. Then 
	\[\dimension(V^{\mu})=\binom{n}{m_1,\dots,m_t}f^{\mu^{(1)}}\cdots f^{\mu^{(t)}}d_1^{m_1}\cdots d_t^{m_t}.\]
	Here $f^{\mu^{(i)}}$ denotes the number of standard Young tableau of shape $\mu^{(i)}$ for each $1\leq i\leq t$.
\end{thm}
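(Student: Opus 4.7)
The plan is to derive the dimension formula directly from the Gelfand--Tsetlin decomposition of $V^{\mu}$ given by Theorem \ref{thm:GT-decomposition}. That theorem writes
\[
V^{\mu}=\bigoplus_{T\in\tab_{G}(n,\mu)}V_T,\qquad V_T\;\cong_{G^n}\;W^{r_T(1)}\otimes\cdots\otimes W^{r_T(n)},
\]
so that $\dimension(V^{\mu})=\sum_{T\in\tab_{G}(n,\mu)}\prod_{i=1}^{n}\dimension(W^{r_T(i)})$.

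The first observation I would record is that the product $\prod_{i=1}^{n}\dimension(W^{r_T(i)})$ is actually independent of $T$ and depends only on the shape $\mu$. Indeed, for any $T\in\tab_{G}(n,\mu)$, exactly $m_j$ of the symbols $r_T(1),\dots,r_T(n)$ equal $\sigma_j$, because these are precisely the symbols attached to the $m_j$ numbers placed in the boxes of $\mu^{(j)}$. Consequently each summand equals $\prod_{j=1}^{t}d_j^{m_j}$, and the sum collapses to
\[
\dimension(V^{\mu})=|\tab_{G}(n,\mu)|\cdot\prod_{j=1}^{t}d_j^{m_j}.
\]

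The remaining step is a combinatorial count of $\tab_{G}(n,\mu)$. A standard Young $G$-tableau of shape $\mu$ is built by first distributing the labels $\{1,\dots,n\}$ into ordered blocks $B_1,\dots,B_t$ with $|B_j|=m_j$ (the block $B_j$ being the set of labels destined for $\mu^{(j)}$), and then, for each $j$, placing the entries of $B_j$ into $\mu^{(j)}$ so that rows and columns strictly increase. The distribution contributes the multinomial factor $\binom{n}{m_1,\dots,m_t}$; once $B_j$ is fixed, the valid fillings of $\mu^{(j)}$ correspond bijectively to standard Young tableaux of shape $\mu^{(j)}$ and thus contribute $f^{\mu^{(j)}}$. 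Multiplying over $j$ gives $|\tab_{G}(n,\mu)|=\binom{n}{m_1,\dots,m_t}f^{\mu^{(1)}}\cdots f^{\mu^{(t)}}$, and substituting into the preceding display yields the claimed formula.

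Since the entire argument is bookkeeping once Theorem \ref{thm:GT-decomposition} is in hand, no step presents a real obstacle; the only point one has to verify is that the standardness condition decouples across the components $\mu^{(j)}$, which is immediate from the definition of a standard Young $G$-tableau, as standardness is imposed separately within each Young diagram occurring in $\mu$.
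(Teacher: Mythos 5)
The paper does not prove this statement at all: it is imported verbatim as \cite[Theorem 6.7]{MS}, so there is no internal argument to compare yours against. Your derivation is correct and is a clean way to make the formula a corollary of the other cited input, Theorem \ref{thm:GT-decomposition}: since $r_T(i)=\sigma_j$ exactly when the label $i$ sits in $\mu^{(j)}$, every Gelfand--Tsetlin subspace $V_T$ has the same dimension $\prod_{j=1}^{t}d_j^{m_j}$, so the whole computation reduces to counting $\tab_G(n,\mu)$; and because standardness of a Young $G$-tableau is, by definition, imposed separately within each diagram $\mu^{(j)}$, the count factors as the multinomial coefficient (choice of which labels land in each component) times $\prod_j f^{\mu^{(j)}}$, the latter via the unique order-preserving relabeling of each block by $\{1,\dots,m_j\}$. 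Both steps are sound, and together they give exactly $\dimension(V^{\mu})=\binom{n}{m_1,\dots,m_t}f^{\mu^{(1)}}\cdots f^{\mu^{(t)}}d_1^{m_1}\cdots d_t^{m_t}$; the only caveat worth stating is that your proof is conditional on the cited Gelfand--Tsetlin decomposition rather than independent of \cite{MS}, which is perfectly acceptable here.
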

Let $\sigma\in\irrG$. Fix a $G$-invariant inner product on $W^{\sigma}$, the irreducible $G$-module indexed by $\sigma$. Also, let $B^{\sigma}$ be any orthonormal basis of $W^{\sigma}$. For $\rho:=(\rho_1,\dots \rho_n)\in\left(\irrG\right)^n$, we use the notation $B^{\rho}$ to denote the basis $B^{\rho_1}\otimes\cdots\otimes B^{\rho_n}:=\{v_1\otimes\cdots\otimes v_n:v_i\in B^{\rho_i}, 1\leq i\leq n\}$ of the irreducible $G^n$-module $W^{\rho}:=W^{\rho_1}\otimes\cdots\otimes W^{\rho_n}$. The basis $B^{\rho}$ is orthonormal under the inner product obtained by multiplying the component inner products.  For $1\leq i< n-1$, we define the \emph{switch operator}
\[\tau_{i,\rho}:W^{\rho_1}\otimes\cdots\otimes W^{\rho_n}\longrightarrow W^{\rho_1}\otimes\cdots\otimes W^{\rho_{i-1}}\otimes W^{\rho_{i+1}}\otimes W^{\rho_i}\otimes W^{\rho_{i+2}}\otimes\cdots\otimes W^{\rho_n}\]
by switching the $i$th and $(i+1)$th factor, i.e., for $w_1\otimes\cdots\otimes w_n\in W^{\rho_1}\otimes\cdots\otimes W^{\rho_n}$,
\[\tau_{i,\rho}(w_1\otimes\cdots\otimes w_n)=w_1\otimes\cdots\otimes w_{i-1}\otimes w_{i+1}\otimes w_i\otimes w_{i+2}\otimes\cdots\otimes w_n.\]
Recall $r_T(i)$, where $T\in\tab_G(n,\mu)$ and $i\in[n]$, from Definition \ref{def:b_(i)+r_T(i)}. Therefore, $B^{r_T}$ is a basis of $W^{r_T}$, where $r_T:=(r_T(1),\dots,r_T(n))$. If $r_T(i)=r_T(i+1)$, then we let $M_{i,r_T}$ be the matrix of the switch operator $\tau_{i,r_T}$ on $W^{\rho}$ with respect to the basis $B^{r_T}$. For $T\in\tab_G(n,\mu)$, there is a $G^n$-linear isomorphism from $W^{r_T}$ to the Gelfand-Tsetlin subspace $V_T$ that takes the basis $B^{r_T}$ (of $W^{r_T}$) to the basis $\B_T$ (of $V_T$) \cite[Lemma 6.11]{MS}; let us set $M_{i,T}:=M_{i,r_T}$.
\begin{thm}[{\cite[Theorem 6.12]{MS}}]\label{thm:coxeter_action}
	Let $\mu\in\yn(\irrG)$. Consider the basis $\underset{T\in\tab_G(n,\mu)}{\cup}\B_T$ of $V^{\mu}$. Recall that $(i,i+1)_G:=(\Gid,\cdots,\Gid;(i,i+1)),1\leq i\leq n-1$, and $I_T$ denote the identity matrix of order $\dim(V_T)$. Then, the action of $(i,i+1)_G$ on $V_T,\;T\in\tab_{G}(n,\mu)$ is given below.
	\begin{enumerate}
		\item Set $S=(i,i+1)\cdot T$, the Young $G$-tableau obtained from $T$ by interchanging $i$ and $i+1$. If $i$ and $i+1$ are not in the same Young diagram of $T$, then $V_T\oplus V_S$ is closed under the action of $(i,i+1)_G$ and the matrix of this action with respect to the basis $\B_T\cup\B_S$ is given as follows
		\[\big[(i,i+1)_G\big]_{\B_T\cup\B_S}:=\begin{pmatrix}
			0&I_T\\
			I_T&0
		\end{pmatrix}\]
	\item If $i$ and $i+1$ are in the same row (respectively, column) of the same Young diagram of $T$, then $V_T$ is closed under the action of $(i,i+1)_G$ and the matrix of this action with respect to the basis $\B_T$ is $M_{i,T}$ (respectively, $-M_{i,T}$).
	\item Suppose $i$ and $i+1$ are in the same Young diagram of $T$, but not in the same row or same column of this Young diagram. Set $S=(i,i+1)\cdot T$; therefore $M_{i,S}=M_{i,r_S}$, $r_S=(r_S(1),\dots,r_S(n))$. Then, $V_T\oplus V_S$ is closed under the action of $(i,i+1)_G$ and the matrix of this action with respect to the basis $\B_T\cup\B_S$ is given as follows
	\[\big[(i,i+1)_G\big]_{\B_T\cup\B_S}:=\begin{pmatrix}
		r^{-1}M_{i,S}&\sqrt{1-r^{-2}}\;I_T\\& &\\
		\sqrt{1-r^{-2}}\;I_T&-r^{-1}M_{i,S}
	\end{pmatrix},\]
	where $r=c(b_T(i+1))-c(b_T(i))$. See Definition \ref{def:b_(i)+r_T(i)} for the notations.
	\end{enumerate}
\end{thm}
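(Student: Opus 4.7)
My plan is to carry out the Okounkov--Vershik approach to Young's seminormal form in the $G \wr S_n$ setting. The first step is to extract two structural facts about how $(i,i+1)_G$ acts on $V^{\mu}$. Since $(i,i+1) \in S_{i+1}$, one has $(i,i+1)_G \in \mathcal{H}_{i+1,n}(G)$, so $(i,i+1)_G$ preserves every $\mathcal{H}_{j,n}(G)$-isotypic component of $V^{\mu}$ for $j \geq i+1$; in particular it acts within each $V^{\mu_{i+1}}$-copy of the branching chain of $T$. Independently, conjugation by $(i,i+1)_G$ defines the automorphism of $\mathcal{H}_{1,n}(G) = G^n$ that simply swaps the $i$-th and $(i+1)$-st factors, so by Theorem \ref{thm:GT-decomposition} it sends the $G^n$-type $r_T$ of $V_T$ to the $G^n$-type $r_S$ of $V_S$, where $S := (i,i+1) \cdot T$.

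Combining these two facts with the multiplicity-free branching rule \eqref{eq:H_i,n-branching} shows that $(i,i+1)_G$ maps $V_T$ into $V_T \oplus V_S$, with the summand $V_S$ being present exactly when $S$ is itself a standard Young $G$-tableau -- equivalently, when $i$ and $i+1$ do not both lie in the same row or column of any single $\mu(\sigma)$. This qualitative dichotomy naturally separates the three cases of the theorem.

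In Case $(1)$, where $r_T(i) \neq r_T(i+1)$ so that $V_T$ and $V_S$ have distinct $G^n$-types, one matches the $G^n$-isomorphisms $V_T \cong W^{r_T}$ and $V_S \cong W^{r_S}$ from Theorem \ref{thm:GT-decomposition} and checks directly that $(i,i+1)_G$ acts as the tensor swap of factors $i$ and $i+1$, which forces the off-diagonal identity block form. Cases $(2)$ and $(3)$ require spectral input. The idea is to introduce Murphy-type elements for $G \wr S_n$ -- $G$-twisted sums of transpositions of the form $\sum_{j < k}\sum_{g \in G} (g)^{(j)} (g^{-1})^{(k)} (j,k)_G$ along with the sums $\sum_{g \in G} (g)^{(k)}$ -- and to show inductively (using \eqref{eq:H_i,n-branching} and Schur's lemma applied to their centralizing subgroups) that they act diagonally on each Gelfand--Tsetlin subspace, with the scalar on $V_T$ identifiable in terms of the content $c(b_T(k))$ of the box that holds $k$ inside $\mu(r_T(k))$. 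Feeding this spectral information into the involution identity $(i,i+1)_G^2 = \Gnid$, the unitarity of the representation with respect to a $G \wr S_n$-invariant inner product, and the $G^n$-linear identification $V_T \cong W^{r_T}$, one pins down the matrix of $(i,i+1)_G$ uniquely: in Case $(3)$ it is the seminormal form $2 \times 2$ block matrix with entries $\pm r^{-1} M_{i,T}$ and $\sqrt{1-r^{-2}}\, I_T$ for $r = c(b_T(i+1)) - c(b_T(i))$, while Case $(2)$ arises as the degenerate limit $r = \pm 1$, in which the off-diagonal block vanishes and only $V_T$ survives, the sign of $M_{i,T}$ being $+$ for the same-row configuration ($r = 1$) and $-$ for the same-column configuration ($r = -1$).

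The main obstacle will be the content-eigenvalue identification for the Murphy elements. Although the construction is formally analogous to the symmetric group case, the wreath-product $G$-factor couples nontrivially with the transpositions $(j,k)_G$; one must verify that the scalar eigenvalue on $V_T$ depends only on the single box $b_T(k)$ within $\mu(r_T(k))$ and not on the surrounding $G$-data, which requires a careful induction along the branching chain \eqref{eq:H_i,n-branching} tracking the box added at each step, and this is the technical heart of the Okounkov--Vershik-style argument in the $G \wr S_n$ setting.
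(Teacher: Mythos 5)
The first thing to note is that the paper contains no proof of this statement to compare against: it is imported verbatim from \cite{MS} (Theorem 6.12 there), and the only related machinery the paper itself uses later is exactly the Gelfand--Tsetlin/Jucys--Murphy technology you invoke, namely the twisted Jucys--Murphy elements \eqref{eq:YJM_els} and their content eigenvalues \eqref{eq:YJM_action_on_GT-subspace}, again quoted from \cite{MS}. So your outline follows what is essentially the standard Okounkov--Vershik route of the cited source, and its structural half is sound: $(i,i+1)_G$ lies in $\mathcal{H}_{j,n}(G)$ for $j\geq i+1$ and normalizes $\mathcal{H}_{j,n}(G)$ for $j\leq i-1$, inducing the swap of the $i$th and $(i+1)$st $\widehat{G}$-labels, and together with the multiplicity-free branching \eqref{eq:H_i,n-branching} this confines the action of $(i,i+1)_G$ to $V_T\oplus V_S$, with $V_S$ present exactly when $S$ is standard.

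The quantitative half, however, has a genuine gap: the ingredients you list for pinning down the matrix (JM eigenvalues on Gelfand--Tsetlin subspaces, the involution $\left((i,i+1)_G\right)^2=\Gnid$, unitarity, and the $G^n$-linear identification $V_T\cong W^{r_T}$) underdetermine it. Twisted Schur's lemma forces the blocks to be scalar multiples of $M_{i,T}$ and $I_T$, and involution plus self-adjointness then only yield a one-parameter family of the form $\left(\begin{smallmatrix} a\,M_{i,T} & b\,I_T\\ b\,I_T & -a\,M_{i,T}\end{smallmatrix}\right)$ with $a^2+b^2=1$; nothing you list forces $a=r^{-1}$. The missing engine is the exchange relation between the transposition and the twisted JM elements,
\[X_{i+1}(G)=(i,i+1)_G\,X_i(G)\,(i,i+1)_G+\sum_{g\in G}(g)^{(i)}(g^{-1})^{(i+1)}(i,i+1)_G,\]
together with the orthogonality computation showing that $\sum_{g\in G}(g)^{(i)}(g^{-1})^{(i+1)}$ acts on $V_T$ as $\tfrac{|G|}{\dim (W^{\sigma})}$ times the switch operator on the $i$th and $(i+1)$st tensor factors when $r_T(i)=r_T(i+1)=\sigma$, and as $0$ when $r_T(i)\neq r_T(i+1)$. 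It is this relation, fed with the eigenvalues $\tfrac{|G|}{\dim (W^{r_T(k)})}c(b_T(k))$ of \eqref{eq:YJM_action_on_GT-subspace}, that produces $a=r^{-1}$ and explains why the factors $|G|/\dim (W^{\sigma})$ cancel so that $r$ is the bare content difference $c(b_T(i+1))-c(b_T(i))$ rather than the rescaled eigenvalue gap; it also handles case (2) rigorously (your ``degenerate limit'' is only a heuristic, since $V_S$ does not exist there). Relatedly, in case (1) the claim that the off-diagonal blocks are exactly $I_T$, rather than a pair of mutually inverse scalars, is not forced by equivariance and the involution alone; it depends on the coherent normalization of the isomorphisms $W^{r_T}\to V_T$ across all tableaux (this is the content of Lemma 6.11 in \cite{MS}), which your plan should fix explicitly. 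Finally, the content-eigenvalue identification you correctly flag as the technical heart is itself only asserted, but even granting it, the exchange relation above is the step your listed ingredients cannot replace.
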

Let us set some notations that we follow from now on. Let $t:=|\irrG|$ and $\irrG:=\{\sigma_1,\dots,\sigma_t\}$, where $\sigma_1=\trivial$ (the trivial representation of $G$). We write $\mu\left(\in\ynirr\right)$ as the tuple $(\mu^{(1)},\dots,\mu^{(t)})$, where $\mu^{(i)}:=\mu(\sigma_i)$ for each $1\leq i\leq t$. We also denote $m_i:=|\mu^{(i)}|,\;W^{\sigma_i}$ is the irreducible $G$-module corresponding to $\sigma_i$ and $d_i=\dim(W^{\sigma_i})$ for each $1\leq i\leq t$. For $T\in\tab_{G}(n,\mu)$, the dimension of $V_T$ is $d_1^{m_1}\cdots d_t^{m_t}$. For $1\leq i<j\leq t$, we set
\begin{align}\label{eq:notations-YGD}
		\MYGD_{i,j}:=&
		\begin{split}
			&\hspace{1.75cm}(n-1)\text{ boxes}\\[-2.5ex]
			&\tiny{\left(\emptyset,\dots,\emptyset,\underset{\uparrow}{\begin{array}{c}\overbrace{\young(\;\;{\;$\cdots$\;\;}\;\;)}\end{array}},\emptyset,\dots,\emptyset,\underset{\uparrow}{\begin{array}{c}\yng(1)\end{array}},\emptyset,\dots,\emptyset\right)} \in\ynirr.\\[-1ex]
			&\hspace{1.75cm}i\text{th position.}\hspace{1.1cm}j\text{th position.}
		\end{split}
\end{align}
Also, for $k\in[n]$, we set 
\begin{align}\label{eq:notations-YGT}
	\T_{i,j;k}:=&
	\begin{split}
		&\hspace{1.75cm}(n-1)\text{ boxes}\\[-2.5ex]
		&\tiny{\left(\emptyset,\dots,\emptyset,\underset{\uparrow}{\begin{array}{c}\overbrace{\young(1{\;$\cdots\xcancel{k}\cdots$\;\;}n)}\end{array}},\emptyset,\dots,\emptyset,\begin{array}{c}\young(k)\end{array},\emptyset,\dots,\emptyset\right)} \in\tab_G(n,\MYGD_{i,j}).\\[-1ex]
		&\hspace{1.75cm}k\text{ is not present.}
	\end{split}
\end{align}
For $2\leq j\leq t$, we simply denote $\MYGD_{1,j}$ by $\MYGD_j$, and $\T_{1,j;k}$ by $\T_{j;k}$.
\begin{prop}\label{prop:permutation-matrix}
	Given $\pi\in S_n$, recall that $\pi_G:=(\Gid,\dots,\Gid;\pi)\in G\wr S_n$. Let us consider the ordered basis $\B_j:=\B_{\T_{j;1}}\cup\dots\cup\B_{\T_{j;n}}$ of the irreducible $G\wr S_n$-module $V^{\MYGD_j}$, obtained by first listing down the elements of $\B_{\T_{j;1}}$, then the elements of $\B_{\T_{j;2}}$, and so on up to the elements of $\B_{\T_{j;n}}$. Then, the matrix of the action of $\pi_G$ on $V^{\MYGD_j}$ with respect to the basis $\B_j$ is given by
	\[\big[\pi_G\big]_{\B_j}=\big[\pi\big]_{n}\otimes I_{d_j}.\]
	Here, $\big[\pi\big]_{n}$ is the permutation matrix of size $n\times n$, $\otimes$ denotes the tensor (alternatively, Kronecker) product of matrices, and $I_{d_j}$ is the identity matrix of size $d_j:=\dim(W^{\sigma_j})$.
\end{prop}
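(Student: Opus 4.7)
The plan is to reduce to the case of adjacent transpositions and then apply Theorem \ref{thm:coxeter_action}. I first record the structure of the relevant tableaux. The Young $G$-diagram $\MYGD_j$ has exactly two nonempty components: an $(n-1)$-box single row at coordinate $\sigma_1=\trivial$ and a single box at coordinate $\sigma_j$. Hence every standard Young $G$-tableau of shape $\MYGD_j$ is of the form $\T_{j;k}$ for a unique $k\in[n]$, where $k$ occupies the singleton box and $1,\dots,k-1,k+1,\dots,n$ fill the row in increasing order. By Theorem \ref{thm:dimen of irr G_n-modules}, $\dim V_{\T_{j;k}}=d_1^{n-1}d_j=d_j$ and $\dim V^{\MYGD_j}=n\cdot d_j$, so $\B_j$ is a basis of $V^{\MYGD_j}$ of the correct cardinality.

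Since $\pi\mapsto\pi_G$ is a homomorphism $S_n\hookrightarrow G\wr S_n$ and $\pi\mapsto[\pi]_n$ is the permutation representation of $S_n$, and $S_n$ is generated by adjacent transpositions, it suffices to prove $[(i,i+1)_G]_{\B_j}=[(i,i+1)]_n\otimes I_{d_j}$ for every $1\leq i\leq n-1$. Fix such an $i$ and apply Theorem \ref{thm:coxeter_action} to each $V_{\T_{j;k}}$. When $k\in\{i,i+1\}$, the numbers $i$ and $i+1$ lie in different Young diagrams of $\T_{j;k}$ (one in $\mu^{(1)}$, the other in $\mu^{(j)}$), so case (1) applies and $V_{\T_{j;i}}\oplus V_{\T_{j;i+1}}$ is invariant with matrix $\bigl(\begin{smallmatrix}0&I_{d_j}\\I_{d_j}&0\end{smallmatrix}\bigr)$ in the basis $\B_{\T_{j;i}}\cup\B_{\T_{j;i+1}}$. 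When $k\notin\{i,i+1\}$, both $i$ and $i+1$ lie in the $\sigma_1$-row of $\T_{j;k}$, so case (2) applies and the matrix is $M_{i,\T_{j;k}}$. Because $r_{\T_{j;k}}(i)=r_{\T_{j;k}}(i+1)=\sigma_1=\trivial$ is one-dimensional, the switch operator $\tau_{i,r_{\T_{j;k}}}$ is the identity on $W^{r_{\T_{j;k}}}$, and transporting through the $G^n$-linear isomorphism $W^{r_T}\to V_T$ of \cite[Lemma 6.11]{MS} gives $M_{i,\T_{j;k}}=I_{d_j}$. Case (3) never arises, since no Young diagram of $\MYGD_j$ has both a row and a column of length exceeding one.

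Assembling these block descriptions, $(i,i+1)_G$ permutes the subspaces $V_{\T_{j;1}},\dots,V_{\T_{j;n}}$ according to the transposition $(i,i+1)$ and acts as $I_{d_j}$ on each individual block. With the ordered basis $\B_j$ obtained by concatenating $\B_{\T_{j;1}},\dots,\B_{\T_{j;n}}$, this is precisely the matrix $[(i,i+1)]_n\otimes I_{d_j}$. Multiplicativity of matrix representations under group multiplication then extends the identity from adjacent transpositions to all of $S_n$, yielding $[\pi_G]_{\B_j}=[\pi]_n\otimes I_{d_j}$. The main obstacle is the identification in case (2): one has to recognize that because $\sigma_1$ is one-dimensional, the abstract switch operator collapses to the identity on $W^{r_{\T_{j;k}}}$, and that the $G^n$-isomorphism between $W^{r_T}$ and $V_T$ faithfully carries this identity onto $V_T$ in the chosen basis $\B_T$.
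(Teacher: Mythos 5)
Your proof is correct and follows essentially the same route as the paper: reduce to adjacent transpositions, apply cases (1) and (2) of Theorem \ref{thm:coxeter_action} to the blocks $V_{\T_{j;k}}$ (with the switch matrix equal to $I_{d_j}$ because $r_{\T_{j;k}}(i)=r_{\T_{j;k}}(i+1)=\trivial$), and then extend by multiplicativity and the mixed-product property of the Kronecker product. The additional checks you include (dimension count via Theorem \ref{thm:dimen of irr G_n-modules}, the observation that case (3) never occurs, and the explicit reason the switch operator collapses to the identity) are correct refinements of the same argument.
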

\begin{proof}
	Let us choose $x\in[n]$. We first show the following:
	\begin{equation}\label{eq:T-M0}
		\big[(x,x+1)_G\big]_{\B_j}:=\big[(x,x+1)\big]_{n}\otimes I_{d_j}.
	\end{equation}
	For all $y\in[n]\setminus\{x,x+1\}$, $\T_{j;y}\in\tab_G(n,\MYGD_j)$, recall $r_{\T_{j;y}}(\bullet)$ from Definition \ref{def:b_(i)+r_T(i)}, and
	\begin{align*}
		r_{\T_{j;y}}&:=(r_{\T_{j;y}}(1),\dots,r_{\T_{j;y}}(y-1),r_{\T_{j;y}}(y),r_{\T_{j;y}}(y+1),\dots,r_{\T_{j;y}}(n))\\
		&\;=(\trivial,\dots,\trivial,\sigma_j,\trivial,\dots,\trivial).
	\end{align*}
	Therefore, the matrix $N_{x,\T_{j;y}}$ in the second part of Theorem \ref{thm:coxeter_action} is $I_{d_j}$. Hence the second part of Theorem \ref{thm:coxeter_action} implies that
	\begin{equation}\label{eq:T-M1}
		\big[(x,x+1)_G\big]_{\B_{\T_{j;y}}}=I_{d_j}.
	\end{equation}
	Again, the first part of Theorem \ref{thm:coxeter_action} implies that
	\begin{equation}\label{eq:T-M2}
		\big[(x,x+1)_G\big]_{\B_{\T_{j;x}}\cup\B_{\T_{j;x+1}}}=\begin{pmatrix}
			0&	I_{d_j}\\
			I_{d_j}&0
		\end{pmatrix}.
	\end{equation}
	Therefore, using \eqref{eq:T-M1} and \eqref{eq:T-M2}, the matrix of the action of $(x,x+1)_G$ on $V^{\MYGD_j}$ with respect to the ordered basis $\B_j:=\B_{\T_{j;1}}\cup\dots\cup\B_{\T_{j;n}}$ is given by \eqref{eq:T-M0}.
	
	As the elementary transpositions of the form $(k,k+1)$ generate $S_n$, we can write $\pi$ as a product of elementary transpositions. Suppose $\pi=\transp_1\cdots \transp_{\ell(\pi)}$, where $\transp_1,\cdots, \transp_{\ell(\pi)}$ are elementary transpositions of the form $(k,k+1)$; then, $\pi_G=(\transp_1)_G\cdots (\transp_{\ell(\pi)})_G$. Therefore, using  \eqref{eq:T-M0}, and the definition of a representation, we have
	\begin{align*}
		\big[\pi_G\big]_{\B_j}=\big[(\transp_1)_G\big]_{\B_j}\cdots \big[(\transp_{\ell(\pi)})_G\big]_{\B_j}&=\left(\big[\transp_1\big]_n\otimes I_{d_j}\right)\cdots \left(\big[\transp_{\ell(\pi)}\big]_n\otimes I_{d_j}\right)\\
		&=\big[\transp_1\big]_n\cdots \big[\transp_{\ell(\pi)}\big]_n\otimes I_{d_j}\\
		&=\big[\transp_1\cdots \transp_{\ell(\pi)}\big]_n\otimes I_{d_j}=\big[\pi\big]_n\otimes I_{d_j}.\qedhere
	\end{align*}
\end{proof}
\begin{prop}\label{prop:def-action}
	Let $(g_1,\dots,g_n;\pi)\in G\wr S_n$. Consider the $G\wr S_n$-module $\MYGD:=V^{\MYGD_0}\oplus V^{\MYGD_1}$,
	\begin{align}\label{eq:def-action-not}
		\begin{split}
			&\hspace{0.75cm}n\emph{ boxes}\\[-1ex]
			\MYGD_0:=&\tiny{\left(\begin{array}{c}\overbrace{\young(\;\;{\;$\cdots$\;\;}\;\;)}\end{array},\emptyset,\dots,\dots,\emptyset\right)},
		\end{split}&\begin{split}
			&\hspace{0.25cm}(n-1)\emph{ boxes}\\[-1ex]
			\MYGD_1:=&\tiny{\left(\begin{array}{c}\overbrace{\young(\;\;{\;$\cdots$\;\;}\;\;,\;)}\end{array},\emptyset,\dots,\dots,\emptyset\right)} \in\ynirr.
		\end{split}
	\end{align}
	Then, the matrix of the action of $(g_1,\dots,g_n;\pi)$ on $\MYGD$ with respect to some basis of $\MYGD$ is the permutation matrix $\big[\pi\big]_n$.
\end{prop}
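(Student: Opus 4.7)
The key observation driving my plan is that both $\MYGD_0$ and $\MYGD_1$ place all $n$ boxes in the trivial-representation slot $\sigma_1=\trivial$ of $\irrG$; hence, for every $T\in\tab_G(n,\MYGD_0)\cup\tab_G(n,\MYGD_1)$ and every $i\in[n]$, $r_T(i)=\trivial$. Theorem \ref{thm:GT-decomposition} then tells me that every Gelfand-Tsetlin subspace $V_T$ occurring in $\MYGD$ is $G^n$-isomorphic to $W^{\trivial}\otimes\dots\otimes W^{\trivial}\cong\mathbb{C}$, so $G^n$ acts trivially on $\MYGD$. Using the factorisation $(g_1,\dots,g_n;\pi)=(g_1,\dots,g_n;\1)\cdot\pi_G$, the action of $(g_1,\dots,g_n;\pi)$ on $\MYGD$ coincides with that of $\pi_G$, so the task reduces to showing that the $S_n$-action $\pi\mapsto\pi_G$ on $\MYGD$ is realised, in some basis, as the permutation matrix $\bigl[\pi\bigr]_n$.

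Next I will identify $\MYGD$ as an $S_n$-module with the permutation representation of $S_n$ on $\mathbb{C}^n$. Theorem \ref{thm:dimen of irr G_n-modules} gives $\dim V^{\MYGD_0}=1$ and $\dim V^{\MYGD_1}=n-1$. Since $G^n$ acts trivially, every $S_n$-invariant subspace is automatically $G\wr S_n$-invariant, so $V^{\MYGD_0}$ and $V^{\MYGD_1}$ remain irreducible when viewed as $S_n$-modules. Applying Theorem \ref{thm:coxeter_action}(2) to the unique standard tableau of shape $\MYGD_0$ (in which every consecutive pair lies in the same row) shows that each $(x,x+1)_G$ acts as $[1]$ on $V^{\MYGD_0}$, so $V^{\MYGD_0}$ is the trivial $S_n$-representation. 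For $V^{\MYGD_1}$, I index the $n-1$ standard Young $G$-tableaux of shape $\MYGD_1$ by the letter $l\in\{2,\dots,n\}$ placed in the second-row box, and compute the trace of $(1,2)_G$ using Theorem \ref{thm:coxeter_action}: the tableau with $l=2$ contributes $-1$ (same-column case of Part~2), while each tableau with $l\geq 3$ contributes $+1$ (same-row case of Part~2), giving total character $-1+(n-2)=n-3$ on a transposition. This matches the character of the standard $S_n$-irrep indexed by the partition $(n-1,1)$, identifying $V^{\MYGD_1}$ with it.

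Combining, $\MYGD$ is $S_n$-isomorphic to the trivial plus standard $S_n$-representation, which is exactly the permutation representation on $\mathbb{C}^n$. Since $G^n$ acts trivially on both sides, this is in fact a $G\wr S_n$-isomorphism. Transporting the natural basis $\{e_1,\dots,e_n\}$ of $\mathbb{C}^n$ back to $\MYGD$ gives a basis in which $(g_1,\dots,g_n;\pi)$ acts as $\bigl[\pi\bigr]_n$, as required.

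The main (mild) obstacle is the last identification: for $n\geq 4$ there are two non-isomorphic $(n-1)$-dimensional irreducible $S_n$-modules (the standard and its sign-twist), and ruling out the sign-twist is precisely what the one-line character computation above accomplishes (one checks it also handles $n=2,3$ uniformly). Everything else follows directly from the representation-theoretic machinery of Section~\ref{sec:GwrSn}.
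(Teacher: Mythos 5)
Your first half coincides with the paper's: since all boxes of $\MYGD_0$ and $\MYGD_1$ lie in the trivial slot, every $r_T(i)=\trivial$, Theorem \ref{thm:GT-decomposition} makes $G^n$ act trivially, and the factorisation $(g_1,\dots,g_n;\pi)=(g_1,\dots,g_n;\1)\cdot\pi_G$ reduces everything to the action of $\pi_G$. Where you genuinely diverge is in identifying the two summands as $S_n$-modules. The paper does not argue by dimensions and character values; it observes that, because all the switch matrices $M_{i,T}$ are $[1]_{1\times 1}$, the matrices that Theorem \ref{thm:coxeter_action} assigns to the adjacent transpositions on the Gelfand--Tsetlin basis of $V^{\MYGD_\eta}$ are exactly Young's orthogonal form for the Specht module $S^{\eta}$ (via \cite{VO}), so $V^{\MYGD_0}\oplus V^{\MYGD_1}\cong S^{(n)}\oplus S^{(n-1,1)}\cong\mathbb{C}[\{1,\dots,n\}]$ directly. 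Your route, ``irreducible $S_n$-module of dimension $n-1$ whose character at a transposition is $n-3$, hence the standard module,'' is shorter to state but shifts the burden onto a classification statement.

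That is where there is a genuine (though repairable) gap. Your claim that $S_n$ has exactly two irreducible modules of dimension $n-1$ for $n\geq 4$ is false at $n=6$: there are four of dimension $5$, indexed by $(5,1)$, $(3,3)$, $(2,2,2)$ and $(2,1^4)$. Consequently ``dimension $n-1$ plus character value $n-3$ on transpositions, which rules out the sign twist'' is not, as written, a complete identification of $V^{\MYGD_1}$ with $S^{(n-1,1)}$: agreement of the character at a single conjugacy class does not determine an irreducible representation, and the extra candidates at $n=6$ are not excluded by your argument. The conclusion does survive --- at $n=6$ the characters of $(3,3)$ and $(2,2,2)$ at a transposition are $1$ and $-1$, not $3$ --- but to close the proof you must either add this check together with the correct classification of $(n-1)$-dimensional irreducibles for larger $n$ (only $(n-1,1)$ and $(2,1^{n-2})$ for $n\geq 7$, plus direct verification in the small cases), or sidestep the issue entirely as the paper does: with $G^n$ acting trivially and every $M_{i,T}=[1]$, Theorem \ref{thm:coxeter_action} already exhibits the adjacent transpositions acting on the Gelfand--Tsetlin basis of $V^{\MYGD_1}$ by the orthogonal-form matrices of the shape $(n-1,1)$, which identifies $V^{\MYGD_1}$ with the standard module without any dimension count. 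Everything else in your write-up (triviality of the $G^n$-action, irreducibility upon restriction to $S_n$, the trace computation $-1+(n-2)=n-3$, and the final transport of the basis of $\mathbb{C}^n$) is correct.
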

\begin{proof}
	To prove this lemma, we appeal to the representation theory of the symmetric group $S_n$. Recall that the irreducible representations of $S_n$ are indexed by the partitions of $n$. The irreducible $S_n$-module indexed by the partition $\eta\vdash n$, denoted $S^{\eta}$, known as the \emph{Specht module} indexed by $\eta$. Let us set $\MYGD_{\eta}:=\left(\eta,\emptyset,\dots,\emptyset\right)\in\ynirr$. Then Theorem \ref{thm:GT-decomposition} implies that the action of $(g_1,\dots,g_n;\pi)=(\Gid,\dots,\Gid;\pi)\cdot(g_{\pi(1)},\dots,g_{\pi(n)};\1)$ on $V^{\MYGD_{\eta}}$ is the same as that of $\pi_G:=(\Gid,\dots,\Gid;\pi)$ on $V^{\MYGD_{\eta}}$. We note that the Gelfand-Tsetlin subspaces of $V^{\MYGD_{\eta}}$ are one-dimensional, known as the Gelfand-Tsetlin vectors. The branching rule for $G\wr S_n$ (see Theorem \ref{thm:Branching-rule}) when applied to $V^{\MYGD_{\eta}}$ is the same as the branching rule of $S_n$ \cite[Theorem 2.8.3]{Sagan} applying to the Specht module $S^{\eta}$, and hence the Gelfand-Tsetlin vectors of $V^{\MYGD_{\eta}}$ match with the Gelfand-Tsetlin vectors of $S^{\eta}$. For $T\in\tab_{G}(n,\MYGD_{\eta})$, recall the orthonormal basis $\B_T$ (consisting of a single unit vector), $S:=(i,i+1)\cdot T$ and the matrix $M_{i,S}$ for $1\leq i<n$, and $I_T$ from Theorem \ref{thm:coxeter_action}. Then, for $1\leq i<n$, the actions of $(i,i+1)_G$ on $V^{\MYGD_{\eta}}$ are the same as that of the corresponding simple transpositions $(i,i+1)$ on the Specht module $S^{\eta}$ with respect to the Gelfand-Tsetlin basis (the basis formed by the Gelfand-Tsetlin vectors); thanks to Theorem \ref{thm:coxeter_action}, $M_{i,T}=M_{i,S}=I_T=[1]_{1\times 1}$,  
	and \cite[p. 600, eq (7.5)]{VO}. Therefore, the action of $\pi_G$ on $V^{\MYGD_{\eta}}$ (and hence that of $(g_1,\dots,g_n;\pi)$ on $V^{\MYGD_{\eta}}$) is the same as the action of the permutation $\pi$ on the Specht module $S^{\eta}$. Thus, the proposition follows from the fact that the natural action of $S_n$ on the set $\{1,\dots,n\}$ decomposes as $S^{(n)}\oplus S^{(n-1,1)}$, i.e.,
	\[S^{\tiny{\young(\;\;{\;$\cdots$\;\;}\;)}}\oplus S^{\tiny{\young(\;\;{\;$\cdots$\;\;}\;,\;)}}.\qedhere\]
\end{proof}
\begin{lem}\label{lem:character-irrg}
	Let $(g_1,\dots,g_n;\pi)\in G\wr S_n$. Recall $\MYGD_j:=\MYGD_{1,j}\in\ynirr,\;2\leq j\leq t$, from \eqref{eq:notations-YGD}. Then the irreducible character $\chi^{\MYGD_j}$ of $G\wr S_n$ indexed by the Young $G$-diagram $\MYGD_j$ satisfies	\[\chi^{\MYGD_j}(g_1,\dots,g_n;\pi)=\sum_{i=1}^{n}\delta_{i,\pi(i)}\chi^j(g_i)\text{ for }2\leq j\leq t.\]
	Here, $\chi^j$ denotes the irreducible character of $G$ indexed by $\sigma_j$.
\end{lem}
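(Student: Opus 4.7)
The idea is to compute the trace of $(g_1,\dots,g_n;\pi)$ acting on $V^{\MYGD_j}$ directly, by explicitly writing down its matrix with respect to the Gelfand--Tsetlin basis and invoking Proposition \ref{prop:permutation-matrix}. First, I would observe that the standard Young $G$-tableaux of shape $\MYGD_j$ are exactly $\T_{j;1},\dots,\T_{j;n}$: the single box in position $j$ can be filled with any $k\in[n]$, and the remaining row at position $1$ must then list $[n]\setminus\{k\}$ in increasing order. Theorem \ref{thm:GT-decomposition} therefore gives the decomposition
\[
V^{\MYGD_j}=\bigoplus_{k=1}^{n} V_{\T_{j;k}},
\]
where each summand is a $G^n$-module isomorphic to $W^{r_{\T_{j;k}}(1)}\otimes\cdots\otimes W^{r_{\T_{j;k}}(n)}$ with $r_{\T_{j;k}}(i)=\trivial$ for $i\neq k$ and $r_{\T_{j;k}}(k)=\sigma_j$; in particular $\dim V_{\T_{j;k}}=d_j$.

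Next, I would factor $(g_1,\dots,g_n;\pi)=(g_1,\dots,g_n;\1)\cdot\pi_G$ and compute the matrix of each factor in the ordered basis $\B_j=\B_{\T_{j;1}}\cup\cdots\cup\B_{\T_{j;n}}$ from Proposition \ref{prop:permutation-matrix}. Since $\trivial(g)=1$ for every $g\in G$, the $G^n$-action of Theorem \ref{thm:GT-decomposition} on the summand $V_{\T_{j;k}}$ reduces to letting $g_k$ act on the single nontrivial tensor factor via $\sigma_j(g_k)$; hence with respect to $\B_{\T_{j;k}}$ the matrix of $(g_1,\dots,g_n;\1)$ on $V_{\T_{j;k}}$ is $\sigma_j(g_k)$. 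Thus
\[
[(g_1,\dots,g_n;\1)]_{\B_j}=\diag\bigl(\sigma_j(g_1),\sigma_j(g_2),\dots,\sigma_j(g_n)\bigr)\quad\text{(as an $n\times n$ block diagonal matrix with blocks of size $d_j$)},
\]
while Proposition \ref{prop:permutation-matrix} gives $[\pi_G]_{\B_j}=[\pi]_n\otimes I_{d_j}$.

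Finally I would multiply these two matrices and take the trace. The $(k,k)$ block of the product equals $\sigma_j(g_k)\cdot\bigl([\pi]_n\otimes I_{d_j}\bigr)_{(k,k)\text{-block}}=\delta_{k,\pi(k)}\,\sigma_j(g_k)$, since the $(k,k)$-block of $[\pi]_n\otimes I_{d_j}$ is $\delta_{k,\pi(k)}I_{d_j}$ regardless of the sign/column convention chosen for $[\pi]_n$. Taking traces of the diagonal blocks yields
\[
\chi^{\MYGD_j}(g_1,\dots,g_n;\pi)=\sum_{k=1}^{n}\delta_{k,\pi(k)}\,\chi^{j}(g_k),
\]
which is the desired identity. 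There is no real obstacle here beyond carefully identifying the $G^n$-module structure on each Gelfand--Tsetlin summand and checking that the block diagonal of $[\pi]_n\otimes I_{d_j}$ records the fixed points of $\pi$; both are immediate from Theorems \ref{thm:GT-decomposition} and Proposition \ref{prop:permutation-matrix}.
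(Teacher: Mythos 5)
Your proposal is correct and follows essentially the same route as the paper's proof: factor $(g_1,\dots,g_n;\pi)=(g_1,\dots,g_n;\1)\cdot\pi_G$, use Theorem \ref{thm:GT-decomposition} to see that $(g_1,\dots,g_n;\1)$ acts block-diagonally by $[g_k]_{B^{\sigma_j}}$ on the Gelfand--Tsetlin summands $V_{\T_{j;k}}$, combine with $[\pi_G]_{\B_j}=[\pi]_n\otimes I_{d_j}$ from Proposition \ref{prop:permutation-matrix}, and take the trace of the product. No gaps; the computation of the diagonal blocks matches the paper's argument exactly.
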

\begin{proof}
For $\MYGD_j:=\MYGD_{1,j}\in\ynirr$, recall $\T_{j;k}:=\T_{1,j;k}\in\tab_G(n,\MYGD_j)$ from \eqref{eq:notations-YGT}. Also recall
\[r_{\T_{j;k}}=\left(r_{\T_{j;k}}(1),\dots,r_{\T_{j;k}}(k),\cdots,r_{\T_{j;k}}(n)\right)=(\trivial,\dots,\sigma_j,\dots,\trivial),\]
and the ordered basis $\B_j$ of the irreducible $G\wr S_n$-module $V^{\MYGD_j}$ from Proposition \ref{prop:permutation-matrix}. Now, using Theorem \ref{thm:GT-decomposition}, the action of $(g_1,\dots,g_n;\1)\in G\wr S_n$ on the Gelfand-Tsetlin subspace $V_{\T_{j;k}}$ with respect to the basis $\B_{\T_{j;k}}$ is given by the matrix $\big[g_k\big]_{B^{\sigma_j}}$, where $B^{\sigma_j}$ is the (orthonormal) basis of $W^{\sigma_j}$. Therefore, the matrix of the action of $(g_1,\dots,g_n;\1)$ on $V^{\MYGD_j}$ with respect to the basis $\B_j$ is given as follows:
\begin{equation}\label{eq:C-I1}
	\big[(g_1,\dots,g_n;\1)\big]_{\B_j}=\underset{1\leq k\leq n}{\oplus}\big[g_k\big]_{B^{\sigma_j}}:=
	\begin{pmatrix}
		[g_1]_{B^{\sigma_j}}& & &  &\\
		&[g_2]_{B^{\sigma_j}}& &  &\\
		& &\ddots&  &\\
		& & & [g_{n-1}]_{B^{\sigma_j}}  &\\
		& & & & [g_n]_{B^{\sigma_j}}
	\end{pmatrix}.
\end{equation}
Again, recalling $d_j=\dim(W^{\sigma_j})$, Proposition \ref{prop:permutation-matrix} implies that
\begin{equation}\label{eq:C-I2}
	\big[\pi_G\big]_{\B_j}=\big[\pi\big]_n\otimes I_{d_j}=
	\begin{pmatrix}
		\delta_{1,\pi(1)}I_{d_j}&\delta_{1,\pi(2)}I_{d_j}&\delta_{1,\pi(3)}I_{d_j}&\cdots&\cdots&\delta_{1,\pi(n)}I_{d_j}\\
		& & & & &\\
		\delta_{2,\pi(1)}I_{d_j}&\delta_{2,\pi(2)}I_{d_j}&\delta_{2,\pi(3)}I_{d_j}&\cdots&\cdots&\delta_{2,\pi(n)}I_{d_j}\\
		\vdots&\vdots&\vdots&\vdots&\vdots&\vdots\\
		& & &\delta_{p,\pi(q)}I_{d_j}& & \\
		\vdots&\vdots&\vdots&\vdots&\vdots&\vdots\\
		\delta_{n,\pi(1)}I_{d_j}&\delta_{n,\pi(2)}I_{d_j}&\delta_{n,\pi(3)}I_{d_j}&\cdots&\cdots&\delta_{n,\pi(n)}I_{d_j}
	\end{pmatrix}
\end{equation}
Now using the definition of a character, we have
\begin{align*}
	\chi^{\MYGD_j}(g_1,\dots,g_n;\pi)
	&=\Tr\left(\big[(g_1,\dots,g_n;\1)\cdot (\Gid,\dots,\Gid;\pi)\big]_{\B_j}\right)\\
	&=\Tr\left(\big[(g_1,\dots,g_n;\1)\big]_{\B_j} \big[\pi_G\big]_{\B_j}\right)\\
	&=\sum_{i=1}^{n}\Tr\left(\delta_{i,\pi(i)}[g_i]_{B^{\sigma_j}}\right)\\
	&=\sum_{i=1}^{n}\delta_{i,\pi(i)}\Tr\left([g_i]_{B^{\sigma_j}}\right)=\sum_{i=1}^{n}\delta_{i,\pi(i)}\chi^j(g_i).\qedhere
\end{align*}
\end{proof}
We now formally define a representation $\mathcal{R}_n:G\wr S_n\rightarrow \text{GL}\left(\mathbb{C}[G\times [n]]\right)$ on the basis elements of $\mathbb{C}[G\times [n]]$ by the action in \eqref{eq:grp_action}, i.e.,
\begin{equation}\label{eq:grp_action-repn}
	\mathcal{R}_n:(g_1,\dots,g_n;\pi)\mapsto\left(\sum_{(h,i)}a_{(h,i)}(h,i)\mapsto\sum_{(h,i)}a_{(h,i)}(g_{\pi(i)}h,\pi(i))\right),\text{ where }a_{(h,i)}\in\mathbb{C}.
\end{equation}
To decompose $\mathbb{C}[G\times [n]]$ into irreducible $G\wr S_n$-modules, we first prove the following lemma.
\begin{lem}\label{lem:character-natural_action}
	Recall that $L$ denotes the left regular representation of $G$. Let $\chi^{\mathcal{R}_n}$ (respectively $\chi^L$) be the character of the representation $\mathcal{R}_n$ (respectively $L$). Then,
	\[\chi^{\mathcal{R}_n}(g_1,\dots,g_n;\pi)=\sum_{s=1}^{n}\delta_{s,\pi(s)}\chi^L(g_{\pi(s)}).\]
\end{lem}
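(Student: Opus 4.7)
The plan is to compute the trace of $\mathcal{R}_n(g_1,\dots,g_n;\pi)$ directly in the natural basis of $\mathbb{C}[G\times[n]]$, namely $\{(h,i):h\in G,\;i\in[n]\}$, and then recognise the resulting expression as the claimed sum by recalling the character of the left regular representation.

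First I would write out the action on a basis vector: from \eqref{eq:grp_action-repn}, $\mathcal{R}_n(g_1,\dots,g_n;\pi)(h,i)=(g_{\pi(i)}h,\pi(i))$. The diagonal entry of the matrix in this basis indexed by $(h,i)$ equals $1$ precisely when $(g_{\pi(i)}h,\pi(i))=(h,i)$, i.e.\ when $\pi(i)=i$ \emph{and} $g_ih=h$, and is $0$ otherwise. Summing diagonal entries,
\[
\chi^{\mathcal{R}_n}(g_1,\dots,g_n;\pi)=\sum_{i=1}^{n}\delta_{i,\pi(i)}\,\bigl|\{h\in G:g_ih=h\}\bigr|.
\]
The inner cardinality is $|G|$ when $g_i=\Gid$ and $0$ otherwise, i.e.\ $|G|\cdot\delta_{g_i,\Gid}$.

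Next I would invoke the standard fact that the left regular representation $L$ of $G$ satisfies $\chi^L(\Gid)=|G|$ and $\chi^L(g)=0$ for $g\neq\Gid$, so $|G|\cdot\delta_{g_i,\Gid}=\chi^L(g_i)$. Hence
\[
\chi^{\mathcal{R}_n}(g_1,\dots,g_n;\pi)=\sum_{i=1}^{n}\delta_{i,\pi(i)}\chi^L(g_i).
\]
Finally, in every term that contributes to the sum we have $\pi(i)=i$, so $g_i=g_{\pi(i)}$, and renaming the summation index to $s$ yields exactly the stated identity. There is no real obstacle here; the only points to be careful about are the direction of the convention $(g_{\pi(i)}h,\pi(i))$ (so that the factor appearing on the right-hand side is $g_{\pi(s)}$ rather than, say, $g_s$ with a different permutation inside) and the identification of $|G|\delta_{g_i,\Gid}$ with $\chi^L(g_i)$.
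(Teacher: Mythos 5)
Your proof is correct and takes essentially the same route as the paper: both compute the trace of $\mathcal{R}_n(g_1,\dots,g_n;\pi)$ with respect to the natural basis $G\times[n]$. The paper reads the trace off the block matrix, whose diagonal blocks are $\delta_{s,\pi(s)}L(g_{\pi(s)})$, while you count fixed basis vectors and then identify $|G|\,\delta_{g_i,\Gid}$ with $\chi^L(g_i)$ — the same computation carried out at the entry level.
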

\begin{proof}
	Let us fix the following ordering in the basis $G\times [n]$: First, list down the elements of $G\times\{1\}$, then the elements of $G\times\{2\}$, and so on up to the elements of $G\times\{n\}$. Then the matrix of the action of $(g_1,\dots,g_n;\pi)$ on $\mathbb{C}[G\times [n]]$ with respect to the ordered basis $G\times[n]=\left(G\times\{1\}\right)\cup \left(G\times\{2\}\right)\cup\cdots\cup \left(G\times\{n\}\right)$ is given as follows:
	\begin{align}\label{eq:C-Naction1}
		&\;\big[\mathcal{R}_n(g_1,\dots,g_n;\pi)\big]_{G\times[n]}\\
		=&\begin{pmatrix}
			\delta_{1,\pi(1)}L(g_{\pi(1)})&\delta_{1,\pi(2)}L(g_{\pi(2)})&\delta_{1,\pi(3)}L(g_{\pi(3)})&\cdots&\cdots&\delta_{1,\pi(n)}L(g_{\pi(n)})\\
			& & & & &\\
			\delta_{2,\pi(1)}L(g_{\pi(1)})&\delta_{2,\pi(2)}L(g_{\pi(2)})&\delta_{2,\pi(3)}L(g_{\pi(3)})&\cdots&\cdots&\delta_{2,\pi(n)}L(g_{\pi(n)})\\
			\vdots&\vdots&\vdots&\vdots&\vdots\\
			&  & &\delta_{p,\pi(q)}L(g_{\pi(q)})& & \\
			\vdots&\vdots&\vdots&\vdots&\vdots&\vdots\\
			\delta_{n,\pi(1)}L(g_{\pi(1)})&\delta_{n,\pi(2)}L(g_{\pi(2)})&\delta_{n,\pi(3)}L(g_{\pi(3)})&\cdots&\cdots&\delta_{n,\pi(n)}L(g_{\pi(n)})\nonumber
		\end{pmatrix}
	\end{align}
The lemma follows by taking trace on both sides of \eqref{eq:C-Naction1}.
\end{proof}
\begin{thm}\label{thm:grp_action-decomposition}
	Recall $\irrG:=\{\sigma_1,\dots,\sigma_t\},\sigma_{1}=\trivial$, $\MYGD_{0}$ and $\MYGD_{1}$ from \eqref{eq:def-action-not}, and $\MYGD_j=\MYGD_{1,j}\in\ynirr,2\leq j\leq t$ from \eqref{eq:notations-YGD}. Then, the decomposition of $\mathbb{C}[G\times [n]]$ into irreducible $G\wr S_n$-modules is given by
	\[\mathbb{C}[G\times [n]]\cong\underset{0\leq j\leq t}{\oplus}d_j\;V^{\MYGD_j},\text{ where }d_0=d_1,\]
	and $d_j$ is the dimension of the irreducible representation of $G$ indexed by $\sigma_j$ for $1\leq j\leq t$.
\end{thm}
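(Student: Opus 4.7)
The plan is to prove the decomposition by a direct character computation, using the lemmas and propositions already established in this section. Since two finite-dimensional representations of a finite group are isomorphic if and only if their characters coincide, it suffices to verify the character identity
\[
\chi^{\mathcal{R}_n}(g_1,\dots,g_n;\pi)=\sum_{j=0}^{t}d_j\,\chi^{\MYGD_j}(g_1,\dots,g_n;\pi),
\]
with the convention $d_0=d_1=1$ (since $\sigma_1=\trivial$).

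First, I would collect the three character formulas produced in the preceding results. Lemma~\ref{lem:character-natural_action} gives $\chi^{\mathcal{R}_n}(g_1,\dots,g_n;\pi)=\sum_{s=1}^{n}\delta_{s,\pi(s)}\chi^L(g_{\pi(s)})$; note that on the support of the Kronecker delta we have $\pi(s)=s$, so $g_{\pi(s)}=g_s$ and this rewrites as $\sum_{s=1}^{n}\delta_{s,\pi(s)}\chi^L(g_s)$. Lemma~\ref{lem:character-irrg} gives, for $2\leq j\leq t$, $\chi^{\MYGD_j}(g_1,\dots,g_n;\pi)=\sum_{i=1}^{n}\delta_{i,\pi(i)}\chi^j(g_i)$. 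Finally, Proposition~\ref{prop:def-action} tells us that $(g_1,\dots,g_n;\pi)$ acts on $V^{\MYGD_0}\oplus V^{\MYGD_1}$ by the permutation matrix $[\pi]_n$, so
\[
\chi^{\MYGD_0}(g_1,\dots,g_n;\pi)+\chi^{\MYGD_1}(g_1,\dots,g_n;\pi)=\Tr([\pi]_n)=\sum_{i=1}^{n}\delta_{i,\pi(i)}.
\]

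Next, I would combine these formulas and factor out $\sum_{i=1}^{n}\delta_{i,\pi(i)}(\cdots)$. The right-hand side of the proposed identity becomes
\[
\sum_{i=1}^{n}\delta_{i,\pi(i)}\left(1+\sum_{j=2}^{t}d_j\chi^j(g_i)\right).
\]
The key observation is that the bracketed quantity equals $\chi^L(g_i)$: by the standard decomposition of the left regular representation (equation~\eqref{eq:Group_alg._decom.}), $\chi^L=\sum_{j=1}^{t}d_j\chi^j$, and since $\sigma_1=\trivial$ with $d_1=1$ and $\chi^1\equiv 1$, this is exactly $1+\sum_{j=2}^{t}d_j\chi^j$.

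The character identity then matches $\chi^{\mathcal{R}_n}$ term-by-term, which completes the proof. There is no real obstacle here — the irreducibility of each $V^{\MYGD_j}$ is already recorded, so the multiplicity list $(d_0,d_1,\dots,d_t)=(1,1,d_2,\dots,d_t)$ is determined by the character equality, and a quick dimension check $1\cdot|G|\cdot n=\sum_{j=0}^{t}d_j\dim(V^{\MYGD_j})$ (using Theorem~\ref{thm:dimen of irr G_n-modules}) can be noted as a sanity verification. The only step requiring care is the bookkeeping when converting $\chi^L(g_{\pi(s)})$ to $\chi^L(g_s)$ under the delta, but this is immediate since the delta forces $\pi(s)=s$.
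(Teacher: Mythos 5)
Your proposal is correct, and it reaches the decomposition by a genuinely more direct final step than the paper, even though both arguments rest on the same ingredients. Both reduce the theorem to the character identity $\chi^{\mathcal{R}_n}=\sum_{j=0}^{t}d_j\chi^{\MYGD_j}$ and both feed on Lemma \ref{lem:character-natural_action}, Lemma \ref{lem:character-irrg}, Proposition \ref{prop:def-action}, and the decomposition \eqref{eq:Group_alg._decom.} of the regular representation of $G$. The paper establishes the identity by computing the inner products $\langle\chi^{\mathcal{R}_n},\chi^{\MYGD_j}\rangle=d_j$ for $0\le j\le t$, which requires averaging over $G\wr S_n$ and the orthogonality relations; strictly speaking, those multiplicity computations pin down only the listed constituents, so completing the decomposition also uses the fact that the dimensions already exhaust $\mathbb{C}[G\times[n]]$, namely $|G|\,n=1+(n-1)+\sum_{j=2}^{t}d_j\cdot nd_j$ (the sanity check you mention, via Theorem \ref{thm:dimen of irr G_n-modules}). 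Your pointwise verification avoids both issues at once: writing the right-hand side as $\sum_{i=1}^{n}\delta_{i,\pi(i)}\bigl(1+\sum_{j=2}^{t}d_j\chi^{j}(g_i)\bigr)$ and recognising the bracket as $\chi^{L}(g_i)$, since $\chi^{L}=\sum_{j=1}^{t}d_j\chi^{j}$ with $d_1=1$ and $\chi^{1}\equiv 1$, proves the character equality outright, with no summation over the wreath product and no separate completeness argument. The two bookkeeping points you flag are exactly the ones needing care and are handled correctly: $\delta_{s,\pi(s)}$ forces $g_{\pi(s)}=g_s$, and $d_0=d_1=1$ so Proposition \ref{prop:def-action} gives $d_0\chi^{\MYGD_0}+d_1\chi^{\MYGD_1}=\Tr([\pi]_n)=\sum_{i=1}^{n}\delta_{i,\pi(i)}$. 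In short, your route is a bit more elementary and self-contained than the paper's inner-product computation, at the cost of no additional machinery.
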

\begin{proof}
	Let us first recall $\chi^{\mathcal{R}_n}$ is the character of the representation $\mathcal{R}_n$, and $\chi^{\MYGD_j},\;0\leq j\leq t$, is the irreducible character of $G\wr S_n$ indexed by $\MYGD_j$. Then it is enough to prove the following
	\begin{equation}\label{eq:g_a-d1}
		\chi^{\mathcal{R}_n}=\displaystyle\sum_{j=0}^{t}d_j\chi^{\MYGD_j}.
	\end{equation}
	 Theorem \ref{thm:ch2_irreducible_character_as_ONB} implies that \eqref{eq:g_a-d1} is equivalent to show $\langle\chi^{\mathcal{R}_n},\chi^{\MYGD_j}\rangle=d_j$ for all $0\leq j\leq t$. We also recall the notation $\chi^j$ for the irreducible character of $G$ indexed by $\sigma_j$ and the fact that the left regular representation of a group decomposes into irreducible representations with multiplicity equal to their respective dimensions (see \eqref{eq:Group_alg._decom.}). Now, using Lemma \ref{lem:character-natural_action} and the fact that $\chi^{\MYGD_0}$ is the trivial character of $G\wr S_n$, we have the following:
	 \begin{align}\label{eq:g_a-d1.0}
	 	\langle\chi^{\mathcal{R}_n},\chi^{\MYGD_0}\rangle&=\frac{1}{|G|^nn!}\sum_{(g_1,\dots,g_n;\pi)\in G\wr S_n}\chi^{\mathcal{R}_n}(g_1,\dots,g_n;\pi)\overline{\chi^{\MYGD_0}(g_1,\dots,g_n;\pi)}\nonumber\\
	 	&=\frac{1}{|G|^nn!}\sum_{(g_1,\dots,g_n;\pi)\in G\wr S_n}\sum_{s=1}^{n}\delta_{s,\pi(s)}\chi^L(g_{\pi(s)})\cdot 1\nonumber\\
	 	&=\frac{1}{|G|^nn!}\sum_{s=1}^{n}\sum_{(g_1,\dots,g_n;\pi)\in G\wr S_n}\delta_{s,\pi(s)}\chi^L(g_{\pi(s)})\nonumber\\
	 	&=\frac{1}{|G|^nn!}\sum_{s=1}^{n}|G|^{n-1}(n-1)!\sum_{g\in G}\chi^L(g)\nonumber\\
	 	&=\frac{1}{|G|}\sum_{g\in G}\chi^L(g)\overline{\chi^1(g)}=\langle\chi^L,\chi^1\rangle=1.
	 \end{align}
 	Now Proposition \ref{prop:def-action} implies that $\chi^{\MYGD_1}=\chi^{\MYGD}-\chi^{\MYGD_0}$, where $\chi^{\MYGD}$ is the character of the representation of $G\wr S_n$ indexed by $\MYGD$. Now, using Lemma \ref{lem:character-natural_action} and Proposition \ref{prop:def-action}, we have,
 	 \begin{align}\label{eq:g_a-d1.1}
 		\langle\chi^{\mathcal{R}_n},\chi^{\MYGD_1}\rangle+\langle\chi^{\mathcal{R}_n},\chi^{\MYGD_0}\rangle&=\langle\chi^{\mathcal{R}_n},\chi^{\MYGD}\rangle\nonumber\\
 		&=\frac{1}{|G|^nn!}\sum_{(g_1,\dots,g_n;\pi)\in G\wr S_n}\chi^{\mathcal{R}_n}(g_1,\dots,g_n;\pi)\overline{\chi^{\MYGD}(g_1,\dots,g_n;\pi)}\nonumber\\
 		&=\frac{1}{|G|^nn!}\sum_{(g_1,\dots,g_n;\pi)\in G\wr S_n}\sum_{s=1}^{n}\delta_{s,\pi(s)}\chi^L(g_{\pi(s)})\cdot\sum_{k=1}^{n}\delta_{k,\pi(k)}\nonumber\\
 		&=\frac{1}{|G|^nn!}\sum_{s=1}^{n}\sum_{k=1}^{n}\sum_{(g_1,\dots,g_n;\pi)\in G\wr S_n}\delta_{k,\pi(k)}\delta_{s,\pi(s)}\chi^L(g_{\pi(s)})\nonumber\\
 		&=\frac{1}{|G|^nn!}\sum_{s,k=1}^{n}|G|^{n-1}\left((n-1)!\delta_{s,k}+(n-2)!(1-\delta_{s,k})\right)\sum_{g\in G}\chi^L(g)\nonumber\\
 		&=\frac{2}{|G|}\sum_{g\in G}\chi^L(g)\overline{\chi^1(g)}=2\;\langle\chi^L,\chi^1\rangle=2.
 	\end{align}
 	Now let $2\leq j\leq t$. Then Lemma \ref{lem:character-irrg} and Lemma \ref{lem:character-natural_action} implies the following:
 	\begin{align}\label{eq:g_a-d1.2}
 		\langle\chi^{\mathcal{R}_n},\chi^{\MYGD_j}\rangle&=\frac{1}{|G|^nn!}\sum_{(g_1,\dots,g_n;\pi)\in G\wr S_n}\chi^{\mathcal{R}_n}(g_1,\dots,g_n;\pi)\overline{\chi^{\MYGD_j}(g_1,\dots,g_n;\pi)}\nonumber\\
 		&=\frac{1}{|G|^nn!}\sum_{(g_1,\dots,g_n;\pi)\in G\wr S_n}\sum_{s=1}^{n}\delta_{s,\pi(s)}\chi^L(g_{\pi(s)})\cdot\overline{\sum_{k=1}^{n}\delta_{k,\pi(k)}\chi^j(g_k)}\nonumber\\
 		&=\frac{1}{|G|^nn!}\sum_{s=1}^{n}\sum_{k=1}^{n}\sum_{(g_1,\dots,g_n;\pi)\in G\wr S_n}\delta_{k,\pi(k)}\delta_{s,\pi(s)}\chi^L(g_{\pi(s)})\overline{\chi^j(g_k)}\nonumber\\
 		&=\frac{1}{|G|^nn!}\sum_{s,k=1}^{n}|G|^{n-1}\delta_{s,k}(n-1)!\sum_{g\in G}\chi^L(g)\overline{\chi^j(g)}\nonumber\\
 		&\hspace*{1in}+\frac{1}{|G|^nn!}\sum_{s,k=1}^{n}|G|^{n-2}(n-2)!(1-\delta_{s,k})\sum_{g,g'\in G}\chi^L(g)\overline{\chi^j(g')}\nonumber\\
 		&=\frac{1}{|G|}\sum_{g\in G}\chi^L(g)\overline{\chi^j(g)}+\frac{1}{|G|^2}\left(\sum_{g\in G}\chi^L(g)\overline{\chi^1(g)}\right)\left(\sum_{g'\in G}\chi^1(g')\overline{\chi^j(g')}\right)\nonumber\\
 		&=\langle\chi^L,\chi^j\rangle+\langle\chi^L,\chi^1\rangle\langle\chi^1,\chi^j\rangle=d_j
 	\end{align}
 	The last equality in \eqref{eq:g_a-d1.2} follows from the fact that $j\neq 1$. As \eqref{eq:g_a-d1.0} and \eqref{eq:g_a-d1.1} implies $\langle\chi^{\mathcal{R}_n},\chi^{\MYGD_0}\rangle=1=d_0$ and $\langle\chi^{\mathcal{R}_n},\chi^{\MYGD_1}\rangle=1=d_1$, the proof follows from \eqref{eq:g_a-d1.2}.
\end{proof}
\section{Proof of Theorem \ref{thm:main_thm}}\label{sec:main_proof}
The group algebra element $\mathcal{A}=\sum_{g\in G}\alpha_gg$ is symmetric and non-negative (see \eqref{eq:group_alg_sett}); therefore, the matrix of the action of $\mathcal{A}$ on $\mathbb{C}[G]$ by multiplication on the right is self-adjoint. Let us recall the $G$-invariant inner product that appeared in the discussion after Theorem \ref{thm:dimen of irr G_n-modules}. For each $\sigma\in\irrG$, we choose the orthonormal basis $B^{\sigma}=\{v^{\sigma}_1,\dots,v^{\sigma}_{\dim(W^{\sigma})}\}$ of the irreducible $G$-module $W^{\sigma}$ such that the matrix of the action of $\mathcal{A}\in\mathbb{R}_+[G]^{(\s)}$ (see \eqref{eq:Re-Sym_Gp-Alg}) on $W^{\sigma}$, with respect to basis $B^{\sigma}$, is a diagonal matrix, thanks to the spectral theorem for the self-adjoint operators. Let $\Delta(\mathcal{A}):=\sum_{g\in G}\alpha_g\left(\Gid-g\right)$ and
\begin{equation}\label{eq:diagonal_matrix}
	\begin{split}
	&\Delta(\mathcal{A})\cdot v^{\sigma}_i=\displaystyle\sum_{g\in G}\alpha_g\left(\Gid-g\right)\cdot v^{\sigma}_i=D^{\sigma}_i\cdot v^{\sigma}_i,\text{ for }1\leq i\leq \dim(W^{\sigma}),\\
	&\text{ i.e., }\quad\Delta_{G}(\mathcal{A},\sigma)=\D(\sigma):=\diag\left(D^{\sigma}_1,\dots,D^{\sigma}_{\dim(W^{\sigma})}\right)
	\end{split}
\end{equation} 
be the diagonal matrix of the action of $\Delta(\mathcal{A})$ on $W^{\sigma}$ with respect to the basis $B^{\sigma}$. Moreover,
\begin{equation*}
	\begin{cases}
		\D(\trivial):=\big[D_1^{\trivial}\big]_{1\times 1}=[0]_{1\times 1},\\
		D^{\sigma}_i>0\text{ for all }1\leq i\leq\dim(W^{\sigma})&\text{ if }\sigma\in\irrG\setminus\{\trivial\},
	\end{cases}
\end{equation*}
as the assumption $\alpha_g>0\;(g\in G)$ implies that the underlying continuous-time random walk on $G$ is irreducible. We will accordingly choose the basis $\B_T$ for the Gelfand-Tsetlin subspaces $V_T$ of the irreducible $G\wr S_n$-module $V^{\mu}$, where $\mu\in\ynirr$ and $T\in\tab_G(n,\mu)$. From now on, we will work with the basis $\B_T,\;T\in\tab_G(n,\mu)$ and $\mu\in\ynirr$. Given any positive integer $k$, we denote the identity matrix of size $k\times k$ by $I_k$. We prove the main theorem in this section. We use the first principle of mathematical induction on $n$ for the proof. We begin with proving the base case (for $n=2$) of the mathematical induction as follows:
\begin{thm}\label{thm:base-case}
	Recall the representation $\mathcal{R}_n$ of $G\wr S_n$ from \eqref{eq:grp_action-repn} for the particular case when $n=2$. Let us consider the group algebra element
	\[\mathcal{G}_2:=x_{1,2}(1,2)_G+\sum_{w=1}^{2}y_w\sum_{g\in G}\alpha_g\left(g\right)^{(w)}\in\mathbb{C}[G\wr S_2],\]
	$x_{1,2},\;y_1,\;y_2>0$, and $\alpha_g=\alpha_{g^{-1}}\geq 0$ for all $g\in G$, such that
	\[\Supp(\mathcal{G}_2):=\{(1,2)_G,(g)^{(w)}:\alpha_g>0,1\leq w\leq 2,g\in G\}\]
	generates the group $G\wr S_2$. Then, using the notational setup of \eqref{eq:sp-gap-rep} and \eqref{eq:sp-gap-rep1}, we have
	\begin{equation}\label{eq:base}
		\psi_{G\wr S_2}(\mathcal{G}_2)= \psi_{G\wr S_2}(\mathcal{G}_2,\mathcal{R}_2).
	\end{equation}
\end{thm}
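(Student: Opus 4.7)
The plan is to prove the base case directly: decompose both sides into irreducibles of $G \wr S_2$ and compare eigenvalues in closed form. Using \eqref{eq:sp-gap-rep1} (since $\Supp(\mathcal{G}_2)$ generates the group),
\[
\psi_{G\wr S_2}(\mathcal{G}_2) \;=\; \min_{\mu \in \widehat{G \wr S_2}} \psi_{G\wr S_2}(\mathcal{G}_2, \mu),
\]
while Theorem \ref{thm:grp_action-decomposition} with $n = 2$ together with the vanishing of $\Delta(\mathcal{G}_2)$ on the trivial summand $V^{\MYGD_0}$ yields
\[
\psi_{G\wr S_2}(\mathcal{G}_2, \mathcal{R}_2) \;=\; \min_{1 \le k \le t} \psi_{G\wr S_2}(\mathcal{G}_2, \MYGD_k).
\]
The inequality $\leq$ in \eqref{eq:base} is then immediate and the real content is the reverse inequality.

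Next I would enumerate the irreducibles of $G \wr S_2$ (indexed by $\mathcal{Y}_2(\widehat{G})$) into three families: \emph{(A)} two boxes in a single row at position $j$; \emph{(B)} two boxes in a single column at position $j$; \emph{(C)} one box each at two distinct positions $i < j$, i.e., $\MYGD_{i,j}$. The members appearing in $\mathcal{R}_2$ are exactly (A) with $j = 1$ ($=\MYGD_0$), (B) with $j = 1$ ($=\MYGD_1$), and (C) with $i = 1$ ($=\MYGD_j$, $2 \le j \le t$). For each $\mu$, Theorem \ref{thm:GT-decomposition} (which makes the vertex terms diagonal via $\D(\sigma)$) and Theorem \ref{thm:coxeter_action} (for the action of $(1,2)_G$) provide a Gelfand--Tsetlin basis in which, after an appropriate reordering, $\Delta(\mathcal{G}_2)$ decomposes into $1 \times 1$ and $2 \times 2$ blocks of the form
\[
\begin{pmatrix} x_{1,2} + a & \pm x_{1,2} \\ \pm x_{1,2} & x_{1,2} + b \end{pmatrix}, \qquad a, b \ge 0,
\]
with smaller eigenvalue $\lambda_-(a,b) = \tfrac{1}{2}\bigl(2 x_{1,2} + a + b - \sqrt{(a-b)^2 + 4 x_{1,2}^2}\bigr)$, which is independent of the signs of the off-diagonal entries (the two sign choices yield similar matrices) and strictly increasing in each of $a, b$.

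The comparison is now routine. For $\MYGD_j$, $j \ge 2$, each block has $(a, b) = (y_1 D_k^{\sigma_j}, y_2 D_k^{\sigma_j})$, and monotonicity gives $\psi_{G\wr S_2}(\mathcal{G}_2, \MYGD_j) = \lambda_-(y_1 D^*_j, y_2 D^*_j)$ with $D^*_j := \min_k D_k^{\sigma_j}$. In families (A) and (B) with $j \ge 2$, the off-diagonal blocks have $a = y_1 D_k^{\sigma_j} + y_2 D_\ell^{\sigma_j}$ and $b = y_1 D_\ell^{\sigma_j} + y_2 D_k^{\sigma_j}$, so monotonicity delivers $\lambda_-(a, b) \ge \psi_{G\wr S_2}(\mathcal{G}_2, \MYGD_j)$; the family-(A) diagonal eigenvalue $(y_1+y_2)D_k^{\sigma_j}$ dominates $\lambda_-(y_1 D_k^{\sigma_j}, y_2 D_k^{\sigma_j})$ by a short algebraic check (after squaring one reduces to $y_1 y_2 D \le x_{1,2}(y_1+y_2)$, which holds in the only nontrivial branch), and the family-(B) diagonal eigenvalue $2 x_{1,2} + (y_1+y_2) D_k^{\sigma_j}$ trivially dominates $\psi_{G\wr S_2}(\mathcal{G}_2, \MYGD_1) = 2 x_{1,2}$. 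In family (C) with $1 < i < j$, each block has $(a, b) = (y_1 D_p^{\sigma_j} + y_2 D_q^{\sigma_i},\, y_1 D_{p'}^{\sigma_i} + y_2 D_{q'}^{\sigma_j})$, and monotonicity together with $D_p^{\sigma_j}, D_{q'}^{\sigma_j} \ge D^*_j$ yields $\lambda_-(a,b) \ge \psi_{G\wr S_2}(\mathcal{G}_2, \MYGD_j)$. The main obstacle is the bookkeeping in family (C) with $1 < i < j$: the two Gelfand--Tsetlin subspaces $V_{\T_{i,j;1}} \cong W^{\sigma_j} \otimes W^{\sigma_i}$ and $V_{\T_{i,j;2}} \cong W^{\sigma_i} \otimes W^{\sigma_j}$ have their tensor factors in opposite orders, so the off-diagonal identity $I_T$ in part (1) of Theorem \ref{thm:coxeter_action} must be carefully interpreted in compatible bases before the $2 \times 2$ blocks emerge cleanly; once this identification is made explicit, the monotonicity argument applies and the base case follows.
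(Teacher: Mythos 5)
Your proposal is correct and follows essentially the same route as the paper: enumerate $\mathcal{Y}_2(\widehat{G})$ into the row, column, and two-position families, use Theorems \ref{thm:GT-decomposition} and \ref{thm:coxeter_action} to reduce $\Delta_{G\wr S_2}(\mathcal{G}_2,\mu)$ to explicit $1\times 1$ and $2\times 2$ blocks, and compare every eigenvalue with those of the constituents $\MYGD_0,\MYGD_1,\MYGD_j$ of $\mathcal{R}_2$ (your monotonicity of $\lambda_-(a,b)$ in each argument, together with the squaring check $y_1y_2D\le x_{1,2}(y_1+y_2)$ in the nontrivial branch, is just a repackaging of the paper's inequalities \eqref{eq:base-case1.1}, \eqref{eq:base-case1.5-1}, \eqref{eq:base-case1.5-2}). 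The basis-pairing subtlety you flag for $\NYGD_{i,j}$ with $1<i<j$ is handled in the paper by the same explicit reordering, so no gap remains.
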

\begin{proof}
	Let us recall that $t:=|\irrG|$ and $\irrG:=\{\sigma_1,\dots,\sigma_t\}$, where $\sigma_1=\trivial$, the trivial representation of $G$. We write $\mu\in\y_2(\irrG)$ as the tuple $(\mu^{(1)},\dots,\mu^{(t)})$, where $\mu^{(i)}:=\mu(\sigma_i)$ for each $1\leq i\leq t$. $W^{\sigma_i}$ is the irreducible $G$-module corresponding to $\sigma_i$ and $d_i=\dim(W^{\sigma_i})$ for each $1\leq i\leq t$. Then, the set of all Young $G$-diagrams with $2$ boxes is given as follows:
	\begin{equation}\label{eq:base-case0}
		\y_2(\irrG)=\{\NYGD_{i,j},\NYGD_k,\NYGD_k':1\leq i<j\leq t,1\leq k\leq t\},\text{ where }
	\end{equation}
	\begin{align*}
		&\begin{split}
			\NYGD_{i,j}:=&\tiny{\left(\emptyset,\dots,\emptyset,\underset{\uparrow}{\begin{array}{c}\yng(1)\end{array}},\emptyset,\dots,\emptyset,\underset{\uparrow}{\begin{array}{c}\yng(1)\end{array}},\emptyset,\dots,\emptyset\right)},\\[-1ex]
			&\hspace{1cm}i\text{th position.}\hspace{0.5cm}j\text{th position.}
		\end{split}\quad\begin{split}
			\NYGD_{k}:=&\tiny{\left(\emptyset,\dots,\emptyset,\underset{\uparrow}{\begin{array}{c}\yng(2)\end{array}},\emptyset,\dots,\emptyset\right)},\\[-1ex]
			&\hspace{1cm}k\text{th position.}
		\end{split}\\
	&\begin{split}
		\text{ and }\NYGD_{k}':=&\tiny{\left(\emptyset,\dots,\emptyset,\underset{\uparrow}{\begin{array}{c}\yng(1,1)\end{array}},\emptyset,\dots,\emptyset\right)}.\\[-1ex]
		&\hspace{1cm}k\text{th position.}
	\end{split}
	\end{align*}
Let us choose arbitrary integers $i,j,k$ satisfying $1\leq i<j\leq t$ and $1\leq k\leq t$. Now, set the following notations for the standard Young $G$-tableaux with $2$ boxes:
\begin{align*}
	\begin{split}
		\T_{k}:=&\tiny{\left(\emptyset,\dots,\emptyset,\underset{\uparrow}{\begin{array}{c}\young(12)\end{array}},\emptyset,\dots,\emptyset\right)}\in\tab_{G}(2,\NYGD_{k}),\\[-1ex]
		&\hspace{1cm}k\text{th position.}
	\end{split}\;\;\\
	\begin{split}
		\T_{k}':=&\tiny{\left(\emptyset,\dots,\emptyset,\underset{\uparrow}{\begin{array}{c}\young(1,2)\end{array}},\emptyset,\dots,\emptyset\right)}\in\tab_{G}(2,\NYGD_{k}'),\\[-1ex]
		&\hspace{1cm}k\text{th position.}
	\end{split}
\end{align*}
\begin{align*}
	&\begin{split}
		\T^{1,2}_{i,j}:=&\tiny{\left(\emptyset,\dots,\emptyset,\underset{\uparrow}{\begin{array}{c}\young(1)\end{array}},\emptyset,\dots,\emptyset,\underset{\uparrow}{\begin{array}{c}\young(2)\end{array}},\emptyset,\dots,\emptyset\right)}\in\tab_{G}(2,\NYGD_{i,j}),\\[-1ex]
		&\hspace{1cm}i\text{th position.}\hspace{0.5cm}j\text{th position.}\\
		\T^{2,1}_{i,j}:=&\tiny{\left(\emptyset,\dots,\emptyset,\underset{\uparrow}{\begin{array}{c}\young(2)\end{array}},\emptyset,\dots,\emptyset,\underset{\uparrow}{\begin{array}{c}\young(1)\end{array}},\emptyset,\dots,\emptyset\right)}\in\tab_{G}(2,\NYGD_{i,j}),\\[-1ex]
		&\hspace{1cm}i\text{th position.}\hspace{0.5cm}j\text{th position.}
	\end{split}
\end{align*}
The Gelfand-Tsetlin decomposition of the irreducible $G\wr S_2$ modules $V^{\NYGD_{k}}$ (respectively, $V^{\NYGD_{k}'}$) is given by $V^{\NYGD_{k}}=V_{\T_k}$ (respectively, $V^{\NYGD_{k}'}=V_{\T_{k}'}$). Also, the Gelfand-Tsetlin decomposition of the irreducible $G\wr S_2$ modules $V^{\NYGD_{i,j}}$ is given by $V^{\NYGD_{i,j}}=V_{\T^{1,2}_{i,j}}\oplus V_{\T^{2,1}_{i,j}}$.

Recall the basis $B^{\sigma_k}:=\{v^{\sigma_k}_1,\dots,v^{\sigma_k}_{d_k}\}$ of $W^{\sigma_k}$, and $r_{\T_k}(\bullet)$ from Definition \ref{def:b_(i)+r_T(i)}. We first focus on the Gelfand-Tsetlin subspace $V_{\T_k}$ of the irreducible $G\wr S_2$-module $V^{\NYGD_k}$. We have $r_{\T_k}:=(r_{\T_k}(1),r_{\T_k}(2))=(\sigma_k,\sigma_k)$. Recall the basis $B^{r_{\T_k}}:=\{v^{\sigma_k}_p\otimes v^{\sigma_k}_q:1\leq p,q\leq d_k\}$ of $W^{\sigma_k}\times W^{\sigma_k}$, and the matrix $M_{1,\T_k}$ of the switch operator $\tau_{1,r_{\T_k}}$ with respect to basis $B^{r_{\T_k}}$. Now we reorder the basis $B^{r_{\T_k}}$ as follows: \[B^{r_{\T_k}}=B^{(1)}_{r_{\T_k}}\cup \left(\underset{1\leq p<q\leq d_k}{\cup}B^{(p,q)}_{r_{\T_k}}\right),\]
where $B^{(1)}_{r_{\T_k}}=\{v_p^{\sigma_k}\otimes v_p^{\sigma_k}: 1\leq p\leq d_k\}$ and $B^{(p,q)}_{r_{\T_k}}=\{v_p^{\sigma_k}\otimes v_q^{\sigma_k},v_q^{\sigma_k}\otimes v_p^{\sigma_k}\},1\leq p<q\leq d_k$. Then, using Theorem \ref{thm:GT-decomposition}, Theorem \ref{thm:coxeter_action}, and \eqref{eq:diagonal_matrix}, we have
\begin{align*}
	\Delta_{G\wr S_2}(\mathcal{G}_2,\NYGD_{k})&=\big[\Delta_{G\wr S_2}(\mathcal{G}_2,\NYGD_{k})\big]_{B^{(1)}_{r_{\T_k}}}\oplus\left(\underset{1\leq p<q\leq d_k}{\oplus}\big[\Delta_{G\wr S_2}(\mathcal{G}_2,\NYGD_{k})\big]_{B^{(p,q)}_{r_{\T_k}}}\right)\\
	&=(y_1+y_2)\cdot\begin{pmatrix}
		D^{\sigma_k}_1& & & &\\
		&D^{\sigma_k}_2& & & \\
		& &\ddots& &\\
		& & & & D^{\sigma_k}_{d_k} 
	\end{pmatrix}\\
	&\quad\quad\oplus
	\left(\underset{1\leq p<q\leq d_k}{\oplus}\begin{pmatrix}
		x_{1,2}+y_1D^{\sigma_k}_p+y_2D^{\sigma_k}_q&-x_{1,2}\\&\\
		-x_{1,2}&x_{1,2}+y_1D^{\sigma_k}_q+y_2D^{\sigma_k}_p
	\end{pmatrix}\right)
\end{align*}
Thus, the eigenvalues of $\Delta_{G\wr S_2}(\mathcal{G}_2,\NYGD_{k})$ are given by $(y_1+y_2)D^{\sigma_k}_p,\;1\leq p\leq d_k$, and
\[\frac{1}{2}(y_1+y_2)(D^{\sigma_k}_p+D^{\sigma_k}_q)+x_{1,2}\pm\sqrt{\frac{1}{4}(y_1-y_2)^2(D^{\sigma_k}_p-D^{\sigma_k}_q)^2+x_{1,2}^2},\;1\leq p<q\leq d_k.\]
Note that the eigenvalue for the case $k=1$ is $0$. Now, for $1<k\leq t$ and $1\leq p<q\leq d_k$,
\begin{align}\label{eq:base-case1.1}
	&\frac{1}{2}(y_1+y_2)(D^{\sigma_k}_p+D^{\sigma_k}_q)+x_{1,2}-\sqrt{\frac{1}{4}(y_1-y_2)^2(D^{\sigma_k}_p-D^{\sigma_k}_q)^2+x_{1,2}^2}\\
	\geq&\;\frac{1}{2}(y_1+y_2)(D^{\sigma_k}_p+D^{\sigma_k}_q)+x_{1,2}-\left(\frac{1}{2}|y_1-y_2||D^{\sigma_k}_p-D^{\sigma_k}_q|+x_{1,2}\right)\nonumber\\
	=&\;\frac{1}{2}(y_1+y_2)|D^{\sigma_k}_p-D^{\sigma_k}_q|+(y_1+y_2)\min\{D^{\sigma_k}_p,D^{\sigma_k}_q\}-\frac{1}{2}|y_1-y_2||D^{\sigma_k}_p-D^{\sigma_k}_q|\nonumber\\
	=&\;\min\{y_1,y_2\}|D^{\sigma_k}_p-D^{\sigma_k}_q|+(y_1+y_2)\min\{D^{\sigma_k}_p,D^{\sigma_k}_q\}\geq (y_1+y_2)\min\{D^{\sigma_k}_p,D^{\sigma_k}_q\}.\nonumber
\end{align}
Therefore, we have
\begin{equation}\label{eq:base-case-main-1}
	\psi_{G\wr S_2}(\mathcal{G}_2,\NYGD_{k})=(y_1+y_2)\cdot\underset{1\leq p\leq d_k}{\min}(D^{\sigma_k}_p)\text{ for }1<k\leq t.
\end{equation}
We use a similar argument for the case of the irreducible $G\wr S_2$-module indexed by $\NYGD_k'$. As $r_{\T'_k}:=(r_{\T'_k}(1),r_{\T'_k}(2))=(\sigma_k,\sigma_k)$, the basis $B^{r_{\T_k'}}$ of $W^{\sigma_k}\times W^{\sigma_k}$ matches with $B^{r_{\T_k}}$. Thus using the similar reordering of $B^{r_{\T_k}}$, Theorem \ref{thm:GT-decomposition}, Theorem \ref{thm:coxeter_action}, and \eqref{eq:diagonal_matrix}, we have
\begin{align*}
	\Delta_{G\wr S_2}(\mathcal{G}_2,\NYGD'_{k})
	&=\left(2x_{1,2}\cdot I_{d_k}+(y_1+y_2)\cdot\begin{pmatrix}
		D^{\sigma_k}_1& & & &\\
		&D^{\sigma_k}_2& & & \\
		& &\ddots& &\\
		& & & & D^{\sigma_k}_{d_k} 
	\end{pmatrix}\right)\\
	&\quad\quad\quad\oplus
	\left(\underset{1\leq p<q\leq d_k}{\oplus}\begin{pmatrix}
		x_{1,2}+y_1D^{\sigma_k}_p+y_2D^{\sigma_k}_q&x_{1,2}\\&\\
		x_{1,2}&x_{1,2}+y_1D^{\sigma_k}_q+y_2D^{\sigma_k}_p
	\end{pmatrix}\right)
\end{align*}
Thus the eigenvalues of $\Delta_{G\wr S_2}(\mathcal{G}_2,\NYGD_{k}')$ are given by $2x_{1,2}+(y_1+y_2)D^{\sigma_k}_p,\;1\leq p\leq d_k$, and
\[\frac{1}{2}(y_1+y_2)(D^{\sigma_k}_p+D^{\sigma_k}_q)+x_{1,2}\pm\sqrt{\frac{1}{4}(y_1-y_2)^2(D^{\sigma_k}_p-D^{\sigma_k}_q)^2+x_{1,2}^2},\;1\leq p<q\leq d_k.\]
The eigenvalue for the case $k=1$ is $2x_{1,2}$. For $1<k\leq t$, using \eqref{eq:base-case1.1} and \eqref{eq:base-case-main-1}, we have
\begin{equation}\label{eq:base-case-main-2}
	\psi_{G\wr S_2}(\mathcal{G}_2,\NYGD_{k}')\geq \psi_{G\wr S_2}(\mathcal{G}_2,\NYGD_{k})\text{ for }1<k\leq t.
\end{equation}

We now focus on $V^{\NYGD_{i,j}}$; recall the Gelfand-Tsetlin decomposition $V^{\NYGD_{i,j}}=V_{\T^{1,2}_{i,j}}\oplus V_{\T^{2,1}_{i,j}}$. Then, using Theorem \ref{thm:GT-decomposition}, Theorem \ref{thm:coxeter_action}, and \eqref{eq:diagonal_matrix}, we can write $\Delta_{G\wr S_2}(\mathcal{G}_2,\NYGD_{i,j})$ as follows
\begin{align}\label{eq:base-case1.2}
	&\begin{pmatrix}
		x_{1,2}I_{d_i}\otimes I_{d_j}&-x_{1,2}I_{d_j}\otimes I_{d_i}\\&\\
		-x_{1,2}I_{d_i}\otimes I_{d_j}&x_{1,2}I_{d_j}\otimes I_{d_i}
		\end{pmatrix}
	+\begin{pmatrix}
			y_1\D(\sigma_i)\otimes I_{d_j}&0\\&\\
			0&y_1\D(\sigma_j)\otimes I_{d_i}
		\end{pmatrix}\\
	&\hspace{5cm}+\begin{pmatrix}
		y_2I_{d_i}\otimes \D(\sigma_j)&0\\&\\
		0&y_2I_{d_j}\otimes \D(\sigma_i)
	\end{pmatrix}\nonumber
\end{align}
Therefore, using a proper basis reordering, the matrix \eqref{eq:base-case1.2} become
\begin{align*}
	\Delta_{G\wr S_2}(\mathcal{G}_2,\NYGD_{i,j})=&\underset{\substack{1\leq p\leq d_i\\1\leq q\leq d_j}}{\oplus}\begin{pmatrix}
		x_{1,2}+y_1D^{\sigma_i}_p+y_2D^{\sigma_j}_q&-x_{1,2}\\&\\
		-x_{1,2}&x_{1,2}+y_1D^{\sigma_i}_q+y_2D^{\sigma_j}_p
	\end{pmatrix}
\end{align*}
Thus the eigenvalues of $\Delta_{G\wr S_2}(\mathcal{G}_2,\NYGD_{i,j})$ are listed as follows:
\begin{equation}\label{eq:base-case1.3}
\frac{1}{2}(y_1+y_2)(D^{\sigma_i}_p+D^{\sigma_j}_q)+x_{1,2}\pm\sqrt{\frac{1}{4}(y_1-y_2)^2(D^{\sigma_i}_p-D^{\sigma_j}_q)^2+x_{1,2}^2},\;1\leq p\leq d_i,\;1\leq q\leq d_j.
\end{equation}
We note that for $i=1,d_1=1$ and $D_1^{\sigma_1}=0$. Thus eigenvalues of $\Delta_{G\wr S_2}(\mathcal{G}_2,\NYGD_{1,j})$ become
\begin{equation}\label{eq:base-case1.4}
\frac{1}{2}(y_1+y_2)D^{\sigma_j}_q+x_{1,2}\pm\sqrt{\frac{1}{4}(y_1-y_2)^2\left(D^{\sigma_j}_q\right)^2+x_{1,2}^2},\;\;1\leq q\leq d_j.
\end{equation}
Again, for any two positive real numbers, $A$ and $B$, we have
\begin{align}
	&(y_1+y_2)A\geq\frac{1}{2}(y_1+y_2)A+x_{1,2}-\sqrt{\frac{1}{4}(y_1-y_2)^2\cdot 0+x_{1,2}^2}\label{eq:base-case1.5-1}\\
		&\hspace*{2cm}\geq\frac{1}{2}(y_1+y_2)A+x_{1,2}-\sqrt{\frac{1}{4}(y_1-y_2)^2A^2+x_{1,2}^2}\nonumber\\
	&\frac{1}{2}(y_1+y_2)(A+B)+x_{1,2}-\sqrt{\frac{1}{4}(y_1-y_2)^2(A-B)^2+x_{1,2}^2}\label{eq:base-case1.5-2}\\
	&\quad\geq\frac{1}{2}(y_1+y_2)\min\{A,B\}+x_{1,2}-\sqrt{\frac{1}{4}(y_1-y_2)^2(\min\{A,B\})^2+x_{1,2}^2}\nonumber.
\end{align}
Therefore, from \eqref{eq:base-case1.3}, \eqref{eq:base-case1.4}, and \eqref{eq:base-case1.5-2}, we have
\begin{equation}\label{eq:base-case-main-3}
	\psi_{G\wr S_2}(\mathcal{G}_2,\NYGD_{i,j})\geq\min\left(\psi_{G\wr S_2}(\mathcal{G}_2,\NYGD_{1,i}),\psi_{G\wr S_2}(\mathcal{G}_2,\NYGD_{1,j})\right),\text{ for }1<i<j\leq t.
\end{equation}
Also, from \eqref{eq:base-case-main-1}, \eqref{eq:base-case1.4}, and \eqref{eq:base-case1.5-1}, we have
\begin{equation}\label{eq:base-case-main-4}
	\psi_{G\wr S_2}(\mathcal{G}_2,\NYGD_{k})\geq \psi_{G\wr S_2}(\mathcal{G}_2,\NYGD_{1,k}),\text{ for }1<k\leq t.
\end{equation}

Now, using \eqref{eq:sp-gap-rep1} and \eqref{eq:base-case0}, we have
\begin{align}
	\psi_{G\wr S_2}(\mathcal{G}_2):=\underset{\mu\in\y_2(\irrG)}{\min}\big\{\psi_{G\wr S_2}(\mathcal{G}_2,\mu)\big\}=&\;\underset{\substack{1\leq i<j\leq t\\1\leq k\leq t}}{\min}\big\{\psi_{G\wr S_2}(\mathcal{G}_2,\NYGD_{i,j}),\psi_{G\wr S_2}(\mathcal{G}_2,\NYGD_k),\psi_{G\wr S_2}(\mathcal{G}_2,\NYGD_k')\big\}\nonumber\\
	=&\;\underset{1< k\leq t}{\min}\big\{\psi_{G\wr S_2}(\mathcal{G}_2,\NYGD_{1,k}),\psi_{G\wr S_2}(\mathcal{G}_2,\NYGD_1),\psi_{G\wr S_2}(\mathcal{G}_2,\NYGD_1')\big\}\label{eq:base-case1.6-1}\\
	=&\;\psi_{G\wr S_2}(\mathcal{G}_2,\mathcal{R}_2).\label{eq:base-case1.6-2}
\end{align}
The equality in \eqref{eq:base-case1.6-1} follows from \eqref{eq:base-case-main-2},\eqref{eq:base-case-main-3}, and \eqref{eq:base-case-main-4}. Also, the equality in \eqref{eq:base-case1.6-2} follows from Theorem \ref{thm:grp_action-decomposition}. The proof is completed here.\qedhere
\end{proof}

Before proceeding further, let us discuss one useful result for any finite group. Given a finite group $\group$, let us recall $\mathbb{C}[\group]^{(\s)}\subset \mathbb{C}[G\wr S_n]$ from \eqref{eq:Re-Sym_Gp-Alg0}, $\Delta_{\group}(\mathcal{W},\rho)$ from \eqref{eq:delta-op}, and $\mathbb{R}_+[\group]^{(\s)}$ from \eqref{eq:Re-Sym_Gp-Alg},
and define
\[\Gamma\left(\group\right):=\bigg\{\mathcal{W}\in\mathbb{C}[\group]^{(\s)}:\Delta_{\group}(\mathcal{W},\rho)\text{ is positive semidefinite for all representation }\rho\bigg\}.\]
\begin{lem}[{\cite[Proposition 4.1]{Cesi}}]\label{lem:Cesi's_useful_result}
	Let $\group$ be a finite group. Then, for $\Gamma\left(\group\right)$, we have
	\begin{enumerate}
		\item If $\mathcal{W},\mathcal{Z}\in\Gamma\left(\group\right)$, then $\alpha\cdot\mathcal{W}+\beta\cdot\mathcal{Z}\in\Gamma\left(\group\right)$ for any two positive real numbers $\alpha,\beta>0$.
		\item $\mathbb{R}_+[\group]^{(\s)}\subset\Gamma\left(\group\right)$.
		\item If $\mathscr{H}$ is a subgroup of $\group$, then $\Gamma\left(\mathscr{H}\right)\subset\Gamma\left(\group\right)$.
	\end{enumerate}
\end{lem}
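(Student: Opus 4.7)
The plan is to reduce each of the three claims to a short observation about the linear operator $\Delta_\group(\,\cdot\,,\rho)$, exploiting that every representation of a finite group can be taken to be unitary. Part (1) is immediate from the definition \eqref{eq:delta-op}: it gives
$$\Delta_\group(\alpha\mathcal{W}+\beta\mathcal{Z},\rho)=\alpha\,\Delta_\group(\mathcal{W},\rho)+\beta\,\Delta_\group(\mathcal{Z},\rho),$$
a non-negative combination of positive semidefinite operators, which is positive semidefinite. Symmetry of $\alpha\mathcal{W}+\beta\mathcal{Z}$ follows from the $\mathbb{R}$-linearity of the involution $\dagger$ together with the fact that $\dagger$ fixes both $\mathcal{W}$ and $\mathcal{Z}$.

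Part (2) is the main computational step, and my approach is the standard symmetrisation trick. For $\mathcal{W}=\sum_g\mathcal{W}_g g\in\mathbb{R}_+[\group]^{(\s)}$, reality and symmetry translate to $\mathcal{W}_g\ge 0$ and $\mathcal{W}_g=\mathcal{W}_{g^{-1}}$. Averaging an arbitrary Hermitian inner product on $\V$ over $\group$ lets me assume $\rho$ is unitary, so $\rho(g^{-1})=\rho(g)^{*}$. Symmetrising the defining sum in the dummy variable $g\leftrightarrow g^{-1}$ then gives
$$\Delta_\group(\mathcal{W},\rho)=\tfrac12\sum_{g\in\group}\mathcal{W}_g\bigl(2I_\V-\rho(g)-\rho(g^{-1})\bigr)=\tfrac12\sum_{g\in\group}\mathcal{W}_g\,(I_\V-\rho(g))(I_\V-\rho(g))^{*},$$
which is a non-negative combination of operators of the form $AA^{*}$, and hence positive semidefinite.

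For part (3), I fix $\mathcal{W}\in\Gamma(\mathscr{H})$ and an arbitrary representation $\rho$ of $\group$. Because $\mathcal{W}\in\mathbb{C}[\mathscr{H}]$ is supported on $\mathscr{H}$, the formula \eqref{eq:delta-op} only involves values $\rho(h)$ with $h\in\mathscr{H}$, so
$$\Delta_\group(\mathcal{W},\rho)=\Delta_{\mathscr{H}}\bigl(\mathcal{W},\rho\!\downarrow^{\group}_{\mathscr{H}}\bigr),$$
and the right-hand side is positive semidefinite by the hypothesis $\mathcal{W}\in\Gamma(\mathscr{H})$ applied to the representation $\rho\!\downarrow^{\group}_{\mathscr{H}}$ of $\mathscr{H}$. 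Symmetry in $\mathbb{C}[\group]$ is equivalent to symmetry in $\mathbb{C}[\mathscr{H}]$ since the involution $\dagger$ restricts compatibly.

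I do not anticipate a genuine obstacle here: the only non-trivial ingredient is the identity $2I_\V-\rho(g)-\rho(g^{-1})=(I_\V-\rho(g))(I_\V-\rho(g))^{*}$ driving part (2), which rests on unitarity; the rest is bookkeeping against the definitions.
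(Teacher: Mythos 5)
Your proof is correct, and it is essentially the standard argument: the paper itself gives no proof of this lemma, citing it as \cite[Proposition 4.1]{Cesi}, and your three steps (linearity of $\mathcal{W}\mapsto\Delta_{\group}(\mathcal{W},\rho)$ plus $\mathbb{R}$-linearity of $\dagger$; unitarization and the identity $2I_{\V}-\rho(g)-\rho(g^{-1})=(I_{\V}-\rho(g))(I_{\V}-\rho(g))^{*}$ combined with $\mathcal{W}_g=\mathcal{W}_{g^{-1}}\ge 0$; and the observation $\Delta_{\group}(\mathcal{W},\rho)=\Delta_{\mathscr{H}}(\mathcal{W},\rho\downarrow^{\group}_{\mathscr{H}})$ for $\mathcal{W}$ supported on $\mathscr{H}$) are exactly the ones in the cited source. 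The only point worth making explicit is that positive semidefiniteness in the definition of $\Gamma(\group)$ is to be read with respect to a $\group$-invariant inner product, which is precisely what your averaging step supplies.
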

\begin{prop}[{\cite[Proposition 3.2]{Cesi-octopus}}]\label{prop:Cesi's_useful_resul}
	Let $\group$ be a finite group and $\mathscr{H}$ be a subgroup of $\group$. If $\mathcal{W}\in\mathbb{R}_+[\group]^{(\s)}$ and $\mathcal{Z}\in\mathbb{R}_+[\mathscr{H}]^{(\s)}$ are such that $\mathcal{W}-\mathcal{Z}\in\Gamma\left(\group\right)$, then 
	\begin{align*}
		&\psi_{\group}(\W)\\
		\geq&\;\min\Big\{\psi_{\mathscr{H}}(\mathcal{Z}),\min\big\{\psi_{\group}(\W,\rho):\rho\in\widehat{\group},\;\rho\downarrow^{\group}_{\mathscr{H}}\text{ contains the trivial representation of }\mathscr{H}\big\}\Big\}
	\end{align*}
\end{prop}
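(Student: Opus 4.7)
The plan is to bound $\psi_{\group}(\mathcal{W},\rho)$ from below for each $\rho\in\widehat{\group}$ separately and then invoke the formula $\psi_{\group}(\mathcal{W})=\min_{\rho\in\widehat{\group}}\psi_{\group}(\mathcal{W},\rho)$ from \eqref{eq:sp-gap-rep1}. The central observation is the operator identity
\[
\Delta_{\group}(\mathcal{W},\rho)\;=\;\Delta_{\group}(\mathcal{W}-\mathcal{Z},\rho)\;+\;\Delta_{\mathscr{H}}\bigl(\mathcal{Z},\rho\!\downarrow^{\group}_{\mathscr{H}}\bigr),
\]
which holds simply because $\mathcal{Z}$ is supported on $\mathscr{H}$, so its action through $\rho$ only sees the restriction $\rho\!\downarrow^{\group}_{\mathscr{H}}$. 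Denote the two summands by $A_{1}(\rho)$ and $A_{2}(\rho)$. By the hypothesis $\mathcal{W}-\mathcal{Z}\in\Gamma(\group)$ the first summand is positive semidefinite, and the second is positive semidefinite as well, since $\mathcal{Z}\in\mathbb{R}_{+}[\mathscr{H}]^{(\s)}$ and the symmetry of $\mathcal{Z}$ gives $2\Delta_{\mathscr{H}}(\mathcal{Z},\cdot)=\sum_{h}\mathcal{Z}_{h}(I-\rho(h))(I-\rho(h))^{*}$. Thus $\Delta_{\group}(\mathcal{W},\rho)\succeq A_{2}(\rho)$ in the Loewner order.

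Next I would split over $\rho\in\widehat{\group}$ according to whether $\rho\!\downarrow^{\group}_{\mathscr{H}}$ contains the trivial $\mathscr{H}$-representation. If it does, then $\psi_{\group}(\mathcal{W},\rho)$ appears, by definition, among the terms of the inner minimum on the right-hand side, so the desired inequality is automatic for that $\rho$. If it does not, decompose $\rho\!\downarrow^{\group}_{\mathscr{H}}\cong\bigoplus_{j}\sigma_{j}$ into irreducible $\mathscr{H}$-representations, all of them non-trivial; then $A_{2}(\rho)=\bigoplus_{j}\Delta_{\mathscr{H}}(\mathcal{Z},\sigma_{j})$ is block-diagonal, and by the very definition of $\psi_{\mathscr{H}}(\mathcal{Z})$ every positive eigenvalue of each block, hence of $A_{2}(\rho)$, is at least $\psi_{\mathscr{H}}(\mathcal{Z})$. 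Equivalently $A_{2}(\rho)\succeq\psi_{\mathscr{H}}(\mathcal{Z})\,P$, where $P$ is the orthogonal projection onto $(\ker A_{2}(\rho))^{\perp}$.

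To turn this into a lower bound on $\psi_{\group}(\mathcal{W},\rho)$, I would take any eigenvector $v\neq 0$ of $\Delta_{\group}(\mathcal{W},\rho)$ with positive eigenvalue $\lambda$ and estimate
\[
\lambda\,\|v\|^{2}\;=\;\langle v,A_{1}(\rho)v\rangle+\langle v,A_{2}(\rho)v\rangle\;\geq\;\langle v,A_{2}(\rho)v\rangle\;\geq\;\psi_{\mathscr{H}}(\mathcal{Z})\,\|Pv\|^{2}.
\]
The general identity $\ker(A_{1}(\rho)+A_{2}(\rho))=\ker A_{1}(\rho)\cap\ker A_{2}(\rho)$ for a sum of positive semidefinite operators yields $\ker\Delta_{\group}(\mathcal{W},\rho)\subseteq\ker A_{2}(\rho)$. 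In the case under consideration, the kernel of $A_{2}(\rho)$ consists precisely of the vectors of $\rho\!\downarrow^{\group}_{\mathscr{H}}$ fixed by the subgroup $\langle\Supp(\mathcal{Z})\rangle$, so under the natural generation conditions present in the intended application the two kernels coincide; then the eigenvector $v\perp\ker\Delta_{\group}(\mathcal{W},\rho)$ also satisfies $v\perp\ker A_{2}(\rho)$, giving $\|Pv\|=\|v\|$ and $\lambda\geq\psi_{\mathscr{H}}(\mathcal{Z})$. Combining both cases and taking the minimum over $\rho\in\widehat{\group}$ then produces the stated inequality.

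The main obstacle in full generality (i.e.\ without assuming that $\Supp(\mathcal{Z})$ generates $\mathscr{H}$) is precisely the gap between $\ker\Delta_{\group}(\mathcal{W},\rho)$ and $\ker A_{2}(\rho)$: a positive-eigenvalue eigenvector of $\Delta_{\group}(\mathcal{W},\rho)$ that happens to lie in $\ker A_{2}(\rho)\setminus\ker A_{1}(\rho)$ satisfies only $\lambda v=A_{1}(\rho)v$, and the estimate above delivers no lower bound on $\lambda$. The cleanest way I see to close this gap is to upgrade the Loewner inequality $\Delta_{\group}(\mathcal{W},\rho)\succeq A_{2}(\rho)$ to $\Delta_{\group}(\mathcal{W},\rho)\succeq\psi_{\mathscr{H}}(\mathcal{Z})(I-Q)$, where $Q$ is the projection onto $\ker\Delta_{\group}(\mathcal{W},\rho)$, via a Courant--Fischer / Weyl-type minimax argument that actually exploits membership in $\Gamma(\group)$ (rather than mere positive semidefiniteness) for $\mathcal{W}-\mathcal{Z}$.
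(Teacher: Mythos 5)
You flag the weak point yourself, and it is indeed fatal as the argument stands; moreover, it cannot be repaired in the way you suggest. (Note the paper offers no proof to compare against: it imports the statement verbatim from Cesi, so your write-up has to stand on its own.) Your decomposition $\Delta_{\group}(\mathcal{W},\rho)=\Delta_{\group}(\mathcal{W}-\mathcal{Z},\rho)+\Delta_{\mathscr{H}}(\mathcal{Z},\rho\downarrow^{\group}_{\mathscr{H}})$, the positive semidefiniteness of both summands, and the dichotomy on whether $\rho\downarrow^{\group}_{\mathscr{H}}$ contains the trivial $\mathscr{H}$-representation are the right skeleton, and your first case is fine. But in the second case the obstruction you name — a positive-eigenvalue eigenvector of $\Delta_{\group}(\mathcal{W},\rho)$ lying in $\ker\Delta_{\mathscr{H}}(\mathcal{Z},\rho\downarrow^{\group}_{\mathscr{H}})$ — is not merely a technical nuisance that a Courant--Fischer argument exploiting $\mathcal{W}-\mathcal{Z}\in\Gamma(\group)$ could remove: with the convention $\min\emptyset=+\infty$ fixed in Section \ref{subsec:notations}, the statement is false without a hypothesis forcing $\Delta_{\mathscr{H}}(\mathcal{Z},\sigma)$ to be nonsingular for every nontrivial $\sigma\in\widehat{\mathscr{H}}$ (equivalently, $\langle\Supp(\mathcal{Z})\rangle=\mathscr{H}$). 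Take $\group=\mathscr{H}=\mathbb{Z}_2\times\mathbb{Z}_2=\{\mathfrak{e},a,b,ab\}$, $\mathcal{Z}=a$, $\mathcal{W}=a+\delta\,ab$ with $0<\delta<1$. Then $\mathcal{W}-\mathcal{Z}=\delta\,ab\in\mathbb{R}_+[\group]^{(\s)}\subset\Gamma(\group)$, and evaluating on the four characters gives $\psi_{\group}(\mathcal{W})=2\delta$, whereas $\psi_{\mathscr{H}}(\mathcal{Z})=2$ and the only $\rho\in\widehat{\group}$ whose restriction to $\mathscr{H}=\group$ contains the trivial representation is the trivial one, with $\psi_{\group}(\mathcal{W},\rho)=+\infty$; so the right-hand side equals $2>2\delta$. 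The character with $\chi(a)=1$, $\chi(b)=-1$ is exactly your problematic vector: it is killed by $\Delta_{\mathscr{H}}(\mathcal{Z},\cdot)$ but not by $\Delta_{\group}(\mathcal{W},\cdot)$, and here $\mathcal{W}-\mathcal{Z}\in\Gamma(\group)$ holds for the trivial reason that it is symmetric and nonnegative, so no finer use of that membership can produce the missing bound.

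So what is missing is a hypothesis, not a trick: either assume $\langle\Supp(\mathcal{Z})\rangle=\mathscr{H}$ (then in your second case $\Delta_{\mathscr{H}}(\mathcal{Z},\rho\downarrow^{\group}_{\mathscr{H}})$ is positive definite, its smallest eigenvalue is at least $\psi_{\mathscr{H}}(\mathcal{Z})$, and your Loewner-order estimate closes the argument exactly as outlined), or reformulate the inner minimum as ranging over all $\rho\in\widehat{\group}$ whose restriction has a nonzero $\langle\Supp(\mathcal{Z})\rangle$-invariant vector; presumably Cesi's original Proposition carries such a formulation, and the transcription above suppresses it. For the use made of the proposition in this paper the stronger hypothesis does hold: $\mathcal{Z}=\vartheta\mathcal{G}_n$ from \eqref{eq:projection-defn} contains $\sum_{w=1}^{n-1}y_w\sum_{g}\alpha_g(g)^{(w)}$ with all $y_w>0$, and the proof of Theorem \ref{thm:main_thm} verifies that $\Supp(\vartheta\mathcal{G}_n)$ generates $G\wr S_{n-1}$; under that condition your two-case argument (case 1: the bound is tautological; case 2: $\Delta_{\group}(\mathcal{W},\rho)\succeq\Delta_{\mathscr{H}}(\mathcal{Z},\rho\downarrow^{\group}_{\mathscr{H}})\succeq\psi_{\mathscr{H}}(\mathcal{Z})\,I$) is a complete and correct proof of the inequality actually needed here.
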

Given a matrix $M$ of size $m\times m$, let $\Sp(M)$ denotes the spectrum of $M$. Also recall that we can list down its eigenvalues as follows:
\[\lambda_1(M)\leq\lambda_2(M)\leq\cdots\leq\lambda_m(M).\]
i.e., $\Sp(M)=\{\lambda_i(M):1\leq i\leq m\}$. Let us first prove a useful lemma as follows:
\begin{lem}\label{lem:ev_conn_w_tensor_prod}
	Let $X$ and $Y$ be two self-adjoint matrices of the same order. Also, let 
	\[\diag(a_1,\dots,a_k):=\begin{pmatrix}
		a_1&&\\
		&\ddots&\\
		&&a_k
	\end{pmatrix},\]
be the $k\times k$ diagonal matrix with real entries, and $I_k$ be the identity matrix of order $k$. Then,
\begin{equation}\label{eq:ev_conn_w_tensor_prod}
	\Sp(X\otimes I_k+Y\otimes\diag(a_1,\dots,a_k))=\underset{1\leq i\leq k}{\cup}\Sp(X+a_i\cdot Y).
\end{equation}
Moreover, if $Y$ is positive semidefinite, then 
\[\lambda_1(X\otimes I_k+Y\otimes\diag(a_1,\dots,a_k))=\lambda_1\left(X+\underset{1\leq i\leq k}{\min}\{a_i\}\cdot Y\right).\]
\end{lem}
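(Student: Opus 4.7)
The plan is to reduce the matrix $M := X\otimes I_k + Y\otimes\diag(a_1,\dots,a_k)$ to block-diagonal form and then read off the eigenvalues block by block. The key algebraic identity is that $A\otimes B$ and $B\otimes A$ are similar via the commutation (``perfect shuffle'') permutation $P$ on $\mathbb{C}^{mk}$ sending $e_i\otimes f_j\mapsto f_j\otimes e_i$, so conjugating by $P$ yields
\[
PMP^{-1} \;=\; I_k\otimes X \;+\; \diag(a_1,\dots,a_k)\otimes Y.
\]
Because $I_k$ and $\diag(a_1,\dots,a_k)$ are diagonal, this right-hand side is genuinely block-diagonal with blocks $X+a_iY$, $i=1,\dots,k$, along the diagonal. (Equivalently, without invoking $P$, one can order the basis of $\mathbb{C}^m\otimes\mathbb{C}^k$ as $\{e_\ell\otimes f_j: 1\le \ell\le m\}$ for $j=1,\dots,k$ in succession; on each $\mathbb{C}^m\otimes\{f_j\}$-summand, $X\otimes I_k$ acts as $X$ and $Y\otimes\diag(a_1,\dots,a_k)$ acts as $a_jY$.) Since similar matrices share spectra and the spectrum of a block-diagonal matrix is the union of the spectra of its diagonal blocks, this proves \eqref{eq:ev_conn_w_tensor_prod}.

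For the final assertion, I would use the monotonicity of $\lambda_1$ with respect to the positive semidefinite order. When $Y\succeq 0$ and $a\le a'$, one has $(a'-a)Y\succeq 0$, hence by Weyl's inequality (or directly from the variational characterization $\lambda_1(H)=\min_{\|v\|=1}\langle v,Hv\rangle$),
\[
\lambda_1(X+aY) \;\le\; \lambda_1(X+a'Y).
\]
Therefore the function $a\mapsto\lambda_1(X+aY)$ is non-decreasing on $\mathbb{R}$, and in particular
\[
\min_{1\le i\le k}\lambda_1(X+a_iY) \;=\; \lambda_1\!\left(X+\min_{1\le i\le k}\{a_i\}\cdot Y\right).
\]
Combining this with the block-diagonal description of $M$, the smallest eigenvalue of $M$ equals $\min_i\lambda_1(X+a_iY)$, which equals $\lambda_1(X+\min_i\{a_i\}\cdot Y)$, as claimed.

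I do not anticipate a genuine obstacle; the argument is routine linear algebra once the tensor-factor-swap (or basis reordering) is recognized. The only point requiring mild care is bookkeeping with the tensor product convention so that the displayed block decomposition is actually block-diagonal rather than a rearranged block matrix, which is handled cleanly by passing through the commutation matrix.
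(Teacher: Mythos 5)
Your proof is correct. The second half is essentially the paper's own argument: both use the fact that adding a positive semidefinite matrix cannot decrease the smallest eigenvalue (Weyl's inequality / the variational characterization) to reduce $\min_i\lambda_1(X+a_iY)$ to $\lambda_1\left(X+\min_i\{a_i\}\cdot Y\right)$. For the spectral identity \eqref{eq:ev_conn_w_tensor_prod}, however, you take a slightly different and in fact cleaner route: you conjugate by the commutation matrix (equivalently, reorder the basis of the tensor product) to exhibit $X\otimes I_k+Y\otimes\diag(a_1,\dots,a_k)$ as a genuinely block-diagonal matrix with blocks $X+a_iY$, so the spectrum --- even counted with multiplicity --- is immediately the union of the block spectra. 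The paper instead takes an eigenvector $v$ of $X+a_iY$ and checks directly that $v\otimes e_i$ is an eigenvector of the big matrix with the same eigenvalue, which gives the inclusion $\cup_i\Sp(X+a_iY)\subseteq\Sp(X\otimes I_k+Y\otimes\diag(a_1,\dots,a_k))$, and then closes the gap by a counting argument (``the two sets have the same cardinality''). Your block-diagonalization makes that counting step unnecessary and delivers the multiplicity-respecting statement for free; the paper's eigenvector computation is more hands-on and avoids introducing the shuffle permutation, but otherwise the two arguments exploit exactly the same structural observation, namely that each subspace $\mathbb{C}^m\otimes e_i$ is invariant and the restriction there is $X+a_iY$.
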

\begin{proof}
	We first note that $X\otimes I_k+Y\otimes\diag(a_1,\dots,a_k)$ and $X+a_i\cdot Y,\;1\leq i\leq k$ are self-adjoint matrices, i.e., their eigenvalues are all real. Let $\lambda\in\Sp(X+a_1\cdot Y)\cup\dots\cup\Sp(X+a_k\cdot Y)$. Then, $\lambda\in\Sp(X+a_i\cdot Y)$ for some $1\leq i\leq k$. Suppose $v$ is the eigenvector (column vector) of $X+a_i\cdot Y$ with eigenvalue $\lambda$, i.e., $Xv+a_i\cdot Yv=\lambda \cdot v$. Set $e_i$ to be the elementary column vector of size $k\times 1$ with the only non-zero entry $1$ at position $(i,1)$. Then, we have
	\begin{align*}
		&(X\otimes I_k+Y\otimes\diag(a_1,\dots,a_k))\cdot (v\otimes e_i)\\
		=&Xv\otimes e_i+Yv\otimes a_i\cdot e_i=(Xv+a_i\cdot Yv)\otimes e_i=\lambda \cdot v\otimes e_i=\lambda\cdot (v\otimes e_i).
	\end{align*}
	Thus, $\lambda$ is an eigenvalue of $X\otimes I_k+Y\otimes\diag(a_1,\dots,a_k)$ with eigenvector $v\otimes e_i$. Therefore,
	\[\underset{1\leq i\leq k}{\cup}\Sp(X+a_i\cdot Y)\subseteq\Sp(X\otimes I_k+Y\otimes\diag(a_1,\dots,a_k));\]
	but the two sets have the same cardinality. Hence, we have \eqref{eq:ev_conn_w_tensor_prod}.
	
	Now assume that $Y$ is positive semidefinite. Then, $\left(a_j-\underset{1\leq i\leq k}{\min}\{a_i\}\right)\cdot Y$ is also positive semidefinite for every $1\leq j\leq k$. Therefore, using \cite[Corollary 4.3.3]{HJ-mat.-an.}, we can conclude that
	\[\lambda_1(X+a_j\cdot Y)=\lambda_1\left(X+\underset{1\leq i\leq k}{\min}\{a_i\}\cdot Y+\left(a_j-\underset{1\leq i\leq k}{\min}\{a_i\}\right)\cdot Y\right)\geq \lambda_1\left(X+\underset{1\leq i\leq k}{\min}\{a_i\}\cdot Y\right).\]
	Thus, the proof follows from \eqref{eq:ev_conn_w_tensor_prod}.
\end{proof}

Let us now focus on our case and recall $\mathcal{G}_n\in\mathbb{C}[G\wr S_n]$ from \eqref{eq:group_alg_sett}. Without loss of generality, from now on, we assume $y_n=\min\{y_i:1\leq i\leq n\}$. This assumption is permissible because $\psi_{G\wr S_n}(\transp\mathcal{G}_n\transp)=\psi_{G\wr S_n}(\mathcal{G}_n)$ and $\psi_{G\wr S_n}(\transp\mathcal{G}_n\transp,\mathcal{R}_n)=\psi_{G\wr S_n}(\mathcal{G}_n,\mathcal{R}_n)$ for any transposition $\transp$. We now define a projection $\vartheta\mathcal{G}_n\in\mathbb{C}[G\wr S_{n-1}]$ of $\mathcal{G}_n\in\mathbb{C}[G\wr S_n]$ as follows:
\begin{equation}\label{eq:projection-defn}
	\vartheta\mathcal{G}_n:=\sum_{1\leq u<v< n}\left(x_{u,v}+\frac{x_{u,n}x_{v,n}}{x_{1,n}+\cdots+x_{n-1,n}}\right)(u,v)_G+\sum_{w=1}^{n-1}y_w\sum_{g\in G}\alpha_g\left(g\right)^{(w)}.
\end{equation}
\begin{thm}\label{thm:inductive-step}
	For the representation $\mathcal{R}_n$ of $G\wr S_n$ defined in \eqref{eq:grp_action-repn}, $\mathcal{G}_n\in\mathbb{C}[G\wr S_n]$ defined in \eqref{eq:group_alg_sett}, and the projection $\vartheta\mathcal{G}_n$ defined in \eqref{eq:projection-defn}, we have
	\[\psi_{G\wr S_n}(\mathcal{G}_n,\mathcal{R}_n)\leq \psi_{G\wr S_{n-1}}(\vartheta\mathcal{G}_n,\mathcal{R}_{n-1}).\]
\end{thm}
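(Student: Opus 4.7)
The plan is to write $\Delta_{G\wr S_n}(\mathcal{G}_n,\mathcal{R}_n)$ as a tensor sum in the basis $G\times[n]$, decompose its spectrum via Lemma \ref{lem:ev_conn_w_tensor_prod}, and then reduce the spectral-gap inequality to two concrete Laplacian inequalities that I prove by a single Kron (star-triangle) lifting.

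A block-by-block computation of the matrices of $(u,v)_G$ and $\sum_g\alpha_g (g)^{(w)}$ under $\mathcal{R}_n$ (along the lines of Lemma \ref{lem:character-natural_action}) shows that, in the ordered basis $G\times\{1\},\ldots,G\times\{n\}$,
\[\Delta_{G\wr S_n}(\mathcal{G}_n,\mathcal{R}_n)\,=\,L_n\otimes I_{|G|}\,+\,D_y\otimes \Delta_{G}(\mathcal{A},L),\]
where $L_n:=\sum_{u<v}x_{u,v}(I_n-P_{(u,v)})$ is the weighted graph Laplacian on $[n]$ (with $P_{(u,v)}$ the $n\times n$ permutation matrix of $(u,v)$), $D_y=\diag(y_1,\ldots,y_n)$, and $\Delta_G(\mathcal{A},L)$ is the lamp operator from \eqref{eq:diagonal_matrix}. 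The identical computation applied with the modified weights of \eqref{eq:projection-defn} gives
\[\Delta_{G\wr S_{n-1}}(\vartheta\mathcal{G}_n,\mathcal{R}_{n-1})\,=\,\tilde L_{n-1}\otimes I_{|G|}\,+\,\tilde D_y\otimes \Delta_{G}(\mathcal{A},L),\]
with $\tilde L_{n-1}$ precisely the Schur complement of the $n$th row/column of $L_n$ (the classical Kron reduction) and $\tilde D_y=\diag(y_1,\ldots,y_{n-1})$.

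Diagonalising $\Delta_G(\mathcal{A},L)$ in a basis adapted to $\mathbb{C}[G]\cong \oplus_{\sigma\in\irrG}d_\sigma W^\sigma$ produces the eigenvalues $\{D^\sigma_i\}_{\sigma\in\irrG,\,1\le i\le d_\sigma}$ of \eqref{eq:diagonal_matrix}, with $D^{\trivial}_1=0$ simple and the remaining $D^\sigma_i>0$. Lemma \ref{lem:ev_conn_w_tensor_prod} then yields
\[\Sp\bigl(\Delta_{G\wr S_n}(\mathcal{G}_n,\mathcal{R}_n)\bigr)\,=\,\bigcup_{\sigma\in\irrG,\,1\le i\le d_\sigma}\Sp\bigl(L_n+D^\sigma_i D_y\bigr),\]
and since $L_n+cD_y\succ 0$ for every $c>0$ while $L_n$ has simple kernel (by connectedness of the graph), the kernel of $\Delta_{G\wr S_n}(\mathcal{G}_n,\mathcal{R}_n)$ is one-dimensional, so
\[\psi_{G\wr S_n}(\mathcal{G}_n,\mathcal{R}_n)\,=\,\min\Big\{\lambda_2(L_n),\ \min_{(\sigma,i)\neq(\trivial,1)}\lambda_1(L_n+D^\sigma_i D_y)\Big\},\]
and the analogous formula holds for $\psi_{G\wr S_{n-1}}(\vartheta\mathcal{G}_n,\mathcal{R}_{n-1})$ with $\tilde L_{n-1},\tilde D_y$ in place of $L_n,D_y$. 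It therefore suffices to establish the two matrix inequalities (a)~$\lambda_2(L_n)\le\lambda_2(\tilde L_{n-1})$ and (b)~$\lambda_1(L_n+cD_y)\le\lambda_1(\tilde L_{n-1}+c\tilde D_y)$ for every $c>0$.

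Both (a) and (b) follow from the same \emph{Kron lifting}: for $v=(v_1,\ldots,v_{n-1})\in\mathbb{R}^{n-1}$, set $\tilde v_p=v_p$ for $p<n$ and $\tilde v_n=\bar v:=X_n^{-1}\sum_{p<n}x_{p,n}v_p$ with $X_n:=\sum_k x_{k,n}$. Standard Schur-complement algebra gives $\tilde v^\top L_n\tilde v=v^\top \tilde L_{n-1}v$ and $\tilde v^\top D_y\tilde v=v^\top \tilde D_y v+y_n\bar v^2$. For (a), take $v$ a $\lambda_2(\tilde L_{n-1})$-eigenvector (necessarily orthogonal to $(1,\ldots,1)^\top\in\mathbb{R}^{n-1}$), project $\tilde v$ onto the orthogonal complement of $(1,\ldots,1)^\top\in\mathbb{R}^n$ (which preserves the $L_n$-quadratic form and shrinks the norm), and apply Courant-Fischer. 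For (b), take $v$ a $\mu$-eigenvector of $\tilde L_{n-1}+c\tilde D_y$ with $\mu:=\lambda_1(\tilde L_{n-1}+c\tilde D_y)$: the Rayleigh quotient of $\tilde v$ is
\[\frac{\tilde v^\top(L_n+cD_y)\tilde v}{\|\tilde v\|^2}\,=\,\frac{\mu\|v\|^2+cy_n\bar v^2}{\|v\|^2+\bar v^2},\]
which is $\le\mu$ exactly when $cy_n\le\mu$. The hard part is precisely this last bound, and it is the only place where the standing normalisation $y_n=\min_i y_i$ is exploited: since $\tilde L_{n-1}\succeq 0$ and $c\tilde D_y\succeq c(\min_{p\le n-1}y_p)I$, one gets $\mu\ge c\min_{p\le n-1}y_p\ge cy_n$. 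Had a vertex of larger $y$-value been eliminated instead, the correction $cy_n\bar v^2$ could exceed $\mu\bar v^2$ and the Rayleigh-quotient argument would fail; this is the main technical obstacle, and it is circumvented exactly by removing the vertex of smallest lamp rate.
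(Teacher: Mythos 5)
Your proposal is correct, and it arrives at exactly the two matrix inequalities that drive the paper's argument, but it reaches and settles them by a genuinely different route. The paper first decomposes $\mathbb{C}[G\times[n]]$ into irreducibles (Theorem \ref{thm:grp_action-decomposition}) and computes each block $\Delta_{G\wr S_n}(\mathcal{G}_n,\MYGD_j)=M_n\otimes I_{d_j}+F_n\otimes\D(\sigma_j)$ via Propositions \ref{prop:permutation-matrix} and \ref{prop:def-action}, whereas you work directly in the defining basis $G\times[n]$ and diagonalise the lamp operator; both land, through Lemma \ref{lem:ev_conn_w_tensor_prod}, on the same matrices $L_n+cD_y$ (the paper's $M_n+D_jF_n$) and on the same reduction to comparing $\lambda_2$ of the Laplacians and $\lambda_1$ of the shifted Laplacians. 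For that comparison the paper writes $L_n=M_n-\bigl(M^{\vartheta}_{n-1}\oplus[0]_{1\times1}\bigr)$ explicitly, verifies it is a positive semidefinite rank-one matrix, and invokes Weyl/Cauchy interlacing (\cite[Corollaries 4.3.3 and 4.3.5]{HJ-mat.-an.}) plus the normalisation $y_n=\min_i y_i$ to identify $\lambda_2\bigl(M^{\vartheta}_{n-1}\oplus[0]_{1\times1}+D_jF_n\bigr)=\lambda_1\bigl(M^{\vartheta}_{n-1}+D_jF^{\vartheta}_{n-1}\bigr)$; you instead recognise $M^{\vartheta}_{n-1}$ as the Schur complement (Kron reduction) of the weighted Laplacian at vertex $n$ (a correct identification, and the conceptual source of the rank-one structure) and prove your (a) and (b) by a variational lifting $v\mapsto\tilde v$, using $\tilde v^{\top}L_n\tilde v=v^{\top}\tilde L_{n-1}v$ and, exactly as in the paper, the bound $cy_n\le\mu$ coming from $y_n$ being the minimal lamp rate. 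Your route buys an interlacing-free, more transparent argument that makes the star--triangle mechanism behind $\vartheta\mathcal{G}_n$ explicit; the paper's route stays inside the representation-theoretic bookkeeping it needs anyway for the induction in Theorem \ref{thm:main_thm}. Two small points to tighten: in (a) the parenthetical \emph{shrinks the norm} is not the property you need --- what makes the Rayleigh quotient drop is $\|P\tilde v\|^{2}=\|v\|^{2}+\bigl(1-\tfrac1n\bigr)\bar v^{2}\ge\|v\|^{2}$, which follows from $\langle\tilde v,\mathbf{1}_n\rangle=\bar v$ since $v\perp\mathbf{1}_{n-1}$; and your closed formula for $\psi_{G\wr S_{n-1}}(\vartheta\mathcal{G}_n,\mathcal{R}_{n-1})$ tacitly assumes $\ker\tilde L_{n-1}$ is one-dimensional (connectivity of the reduced graph), which should be noted --- it is either checked directly, as the paper does in the proof of Theorem \ref{thm:main_thm}, or supplied by your inequality (a) itself, since $\lambda_2(\tilde L_{n-1})\ge\lambda_2(L_n)>0$.
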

\begin{proof}
	Let us recall that $t:=|\irrG|$ and $\irrG:=\{\sigma_1,\dots,\sigma_t\}$, where $\sigma_1=\trivial$, the trivial representation of $G$. We write $\mu\in\y_n(\irrG)$ as the tuple $(\mu^{(1)},\dots,\mu^{(t)})$, where $\mu^{(i)}:=\mu(\sigma_i)$ for each $1\leq i\leq t$. $W^{\sigma_i}$ is the irreducible $G$-module corresponding to $\sigma_i$ and $d_i=\dim(W^{\sigma_i})$ for each $1\leq i\leq t$. Then, from Theorem \ref{thm:grp_action-decomposition}, the representation $\mathcal{R}_n$ decomposes into irreducible $G\wr S_n$-modules as follows:
	\begin{equation}\label{eq:IS-0.1}
		\mathbb{C}[G\times [n]]\cong \;V^{\MYGD_0}\oplus d_1\;V^{\MYGD_1}\oplus d_2\;V^{\MYGD_2}\oplus\cdots\oplus d_t\;V^{\MYGD_t},
	\end{equation}
	 where $\MYGD_0$ and $\MYGD_1$ are defined in \eqref{eq:def-action-not}, and $\MYGD_j=\MYGD_{1,j}$ (see \eqref{eq:notations-YGD}) for $2\leq j\leq t$. Theorem \ref{thm:grp_action-decomposition} also provides the decomposition of the representation $\mathcal{R}_{n-1}$ (replace $n$ by $n-1$) into irreducible $G\wr S_{n-1}$-modules as follows:
	 \begin{equation}\label{eq:IS-0.2}
	 	\mathbb{C}[G\times [n-1]]\cong \;V^{\MYGD_0'}\oplus d_1\;V^{\MYGD_1'}\oplus d_2\;V^{\MYGD_2'}\oplus\cdots\oplus d_t\;V^{\MYGD_t'},\text{ where }
	 \end{equation}
	 \begin{equation}\label{eq:IS-0.3}
	 	\begin{cases}
	 		\begin{split}
	 		&\hspace{0.25cm}(n-1)\text{ boxes}\\[-1ex]
	 		\MYGD_{0}':=&\tiny{\left(\begin{array}{c}\overbrace{\young(\;\;{\;$\cdots$\;\;}\;\;)}\end{array},\emptyset,\dots,\emptyset\right)} \in\y_{n-1}(\irrG),\\[0.5ex]
	 		&\hspace{0.25cm}(n-2)\text{ boxes}\\[-1ex]
	 		\MYGD_{1}':=&\tiny{\left(\begin{array}{c}\overbrace{\young(\;\;{\;$\cdots$\;\;}\;\;,\;)}\end{array},\emptyset,\dots,\emptyset\right)} \in\y_{n-1}(\irrG),\\[0.5ex]
	 		&\hspace{0.35cm}(n-2)\text{ boxes}\\[-1ex]
	 		\MYGD_{j}':=&\tiny{\left(\begin{array}{c}\overbrace{\young(\;\;{\;$\cdots$\;\;}\;\;)}\end{array},\emptyset,\dots,\emptyset,\underset{\uparrow}{\begin{array}{c}\yng(1)\end{array}},\emptyset,\dots,\emptyset\right)} \in\y_{n-1}(\irrG),\;\;2\leq j\leq t.\\[-1ex]
	 		&\hspace{3.75cm}j\text{th position.}
	 	\end{split}
	 	\end{cases}
	 \end{equation}
	 The decompositions \eqref{eq:IS-0.1} and \eqref{eq:IS-0.2} together imply the theorem if the following inequalities hold:
	 \begin{align}
	 	\psi_{G\wr S_n}(\mathcal{G}_n,\MYGD_{0}\oplus\MYGD_{1})&\leq \psi_{G\wr S_{n-1}}(\vartheta\mathcal{G}_n,\MYGD_{0}'\oplus\MYGD_{1}'),\;\;\text{ and }	\label{eq:IS-1.1}\\
	 	\psi_{G\wr S_n}(\mathcal{G}_n,\MYGD_j)&\leq \psi_{G\wr S_{n-1}}(\vartheta\mathcal{G}_n,\MYGD_j')\;\;\text{ for }2\leq j\leq t.\label{eq:IS-1.2}
	 \end{align}
	 Let us choose an arbitrary integer $j$ satisfying $2\leq j\leq t$. Throughout this theorem, we will use the ordered basis $\B_j$ of $\MYGD_{j}$ given in Proposition \ref{prop:permutation-matrix}. Now, using Theorem \ref{thm:GT-decomposition} and notation \eqref{eq:diagonal_matrix}, we have
	 \begin{equation}\label{eq:IS-2.0}
	 \Delta_{G\wr S_n}\left(y_w\displaystyle\sum_{g\in G}\alpha_g\left(g\right)^{(w)},\MYGD_{j}\right)=\underset{1\leq i\leq n}{\oplus}\delta_{i,w}y_i\D(\sigma_j),
	 \end{equation}
	 for every $w\in [n]$. Thus,
	 \begin{equation}\label{eq:IS-2.1}
	 	 \Delta_{G\wr S_n}\left(\sum_{w=1}^{n}y_w\displaystyle\sum_{g\in G}\alpha_g\left(g\right)^{(w)},\MYGD_{j}\right)
	 =\begin{pmatrix}
	 	y_1& & & &\\
	 	&y_2& & & \\
	 	& &\ddots& &\\
	 	& & & & y_n
	 \end{pmatrix}\otimes
	 	\begin{pmatrix}
	 		D^{\sigma_j}_1& & & &\\
	 		&D^{\sigma_j}_2& & & \\
	 		& &\ddots& &\\
	 		& & & & D^{\sigma_j}_{d_j} 
	 	\end{pmatrix}.
	 \end{equation}
	 Now, using Proposition \ref{prop:permutation-matrix}, we also have
	 \begin{equation}\label{eq:IS-2.2}
	 	 \Delta_{G\wr S_n}\left(\sum_{1\leq u<v\leq n}x_{u,v}(u,v)_G,\MYGD_{j}\right)=\sum_{1\leq u<v\leq n}x_{u,v}\left(I_n-\big[(u,v)\big]_n\right)\otimes I_{d_j}=M_n\otimes I_{d_j},
	 \end{equation}
 where we set $M_n$ to be the $n\times n$ matrix $\displaystyle\sum_{1\leq u<v\leq n}x_{u,v}\left(I_n-\big[(u,v)\big]_n\right)$. Therefore, combining \eqref{eq:IS-2.1} and \eqref{eq:IS-2.2}, we have
 \begin{equation}\label{eq:IS-2.3}
 	\Delta_{G\wr S_n}\left(\mathcal{G}_n,\MYGD_{j}\right)=M_n\otimes I_{d_j}+\begin{pmatrix}
 		y_1& & & &\\
 		&y_2& & & \\
 		& &\ddots& &\\
 		& & & & y_n
 	\end{pmatrix}\otimes
 	\begin{pmatrix}
 	D^{\sigma_j}_1& & & &\\
 	&D^{\sigma_j}_2& & & \\
 	& &\ddots& &\\
 	& & & & D^{\sigma_j}_{d_j} 
	\end{pmatrix}.
 \end{equation}
 Again, Proposition \ref{prop:def-action} implies that
 \begin{equation}\label{eq:IS-2.4}
 	\Delta_{G\wr S_n}\left(\mathcal{G}_n,\MYGD_0\oplus\MYGD_1\right) =\sum_{1\leq u<v\leq n}x_{u,v}\left(I_n-\big[(u,v)\big]_n\right)=M_n.
 \end{equation}
We note that Proposition \ref{prop:permutation-matrix} and Proposition \ref{prop:def-action} hold for the case of $G\wr S_{n-1}$ (replace $n$ by $n-1$, and $\MYGD_i$ by $\MYGD_i'\;0\leq i\leq t$). Therefore, by a similar argument given for \eqref{eq:IS-2.3} and \eqref{eq:IS-2.4}, we have
\begin{align}
	\Delta_{G\wr S_{n-1}}\left(\vartheta\mathcal{G}_n,\MYGD_0'\oplus\MYGD_1'\right) &=\sum_{1\leq u<v<n}\left(x_{u,v}+\frac{x_{u,n}x_{v,n}}{x_{1,n}+\cdots+x_{n-1,n}}\right)\left(I_{n-1}-\big[(u,v)\big]_{n-1}\right),\nonumber\\
	&=:M^{\vartheta}_{n-1}\label{eq:IS-2.5}\\
	\Delta_{G\wr S_{n-1}}\left(\vartheta\mathcal{G}_{n},\MYGD_{j}'\right)&=M^{\vartheta}_{n-1}\otimes I_{d_j}+
	\begin{pmatrix}
		y_1& & & &\\
		&y_2& & & \\
		& &\ddots& &\\
		& & & & y_{n-1}
	\end{pmatrix}\otimes\D(\sigma_j).\label{eq:IS-2.6}
\end{align}
Given a matrix $M$ of size $m\times m$, let us recall that we can list down its eigenvalues as follows:
\[\lambda_1(M)\leq\lambda_2(M)\leq\cdots\leq\lambda_m(M).\]
We now set $D_j=\min\{D_i^{\sigma_j}:1\leq i\leq d_j\}$
\begin{align*}
	F_n=\begin{pmatrix}
		y_1& & & &\\
		&y_2& & & \\
		& &\ddots& &\\
		& & & y_{n-1}& \\
		& & & & y_n
	\end{pmatrix}\text
	{ and }
	F^{\vartheta}_{n-1}&=\begin{pmatrix}
		y_1& & & &\\
		&y_2& & & \\
		& &\ddots& &\\
		& & & y_{n-1} & \\
	\end{pmatrix}.
\end{align*}
In view of \eqref{eq:IS-2.3}, \eqref{eq:IS-2.6}, and Lemma \ref{lem:ev_conn_w_tensor_prod}, we can conclude that
\begin{align}
	\lambda_1\left(\Delta_{G\wr S_n}\left(\mathcal{G}_n,\MYGD_{j}\right)\right)&=\lambda_1\left(M_n\otimes I_{d_j}+F_n\otimes\D(\sigma_j)\right)=\lambda_1\left(M_n+D_j\cdot F_n\right)\label{eq:IS-2.7}\\
	\lambda_1\left(\Delta_{G\wr S_{n-1}}\left(\vartheta\mathcal{G}_n,\MYGD_{j}'\right)\right)&=\lambda_1\left(M_{n-1}^{\vartheta}\otimes I_{d_j}+F^{\vartheta}_{n-1}\otimes\D(\sigma_j)\right)\label{eq:IS-2.8}\\
		&=\lambda_1\left(M_{n-1}^{\vartheta}+D_j\cdot F_{n-1}^{\vartheta}\right)\nonumber
\end{align}
We note that the matrix $M_{n-1}^{\vartheta}\oplus[0]_{1\times 1}$ has size $n\times n$, and
\begin{align}
	L_n:=&M_n-M_{n-1}^{\vartheta}\oplus[0]_{1\times 1}\nonumber\\
	=&\sum_{u=1}^{n-1}x_{u,n}\left(I_n-\big[(u,n)\big]_n\right)-\sum_{1\leq u<v<n}\frac{x_{u,n}x_{v,n}}{x_{1,n}+\cdots+x_{n-1,n}}\left(I_{n}-\big[(u,v)\big]_{n-1}\oplus[1]_{1\times 1}\right).\label{eq:IS-3.1}
\end{align}
The $(i,j)$th entry of  $L_n$ is given as follows:
\begin{equation}\label{eq:IS-rank-one}
	\left(L_n\right)_{ij}=\begin{cases}
		\frac{x_{i,n}x_{j,n}}{x_{1,n}+\cdots+x_{n-1,n}}&\text{ if }i\neq j<n\\
		\frac{x_{i,n}^2}{x_{1,n}+\cdots+x_{n-1,n}}&\text{ if }i = j<n\\
		-x_{i,n}&\text{ if }i < j = n\\
		-x_{j,n}&\text{ if }j <i = n\\
		\displaystyle\sum_{u=1}^{n-1}x_{u,n}&\text{ if }j =i = n\\
	\end{cases}
\end{equation}
Let us set $A=x_{1,n}+\cdots+x_{n-1,n}$ and $A_i=\frac{x_{i,n}}{\sqrt{A}}$ for $1\leq i\leq n-1$. Then $L_n$ can be written as
\[L_n=\begin{pmatrix}
	A_1^2&A_1A_2&\cdots&\cdots&\cdots&A_1A_{n-1}&-A_1\sqrt{A}\\
	A_2A_1&A_2^2&\cdots&\cdots&\cdots&A_2A_{n-1}&-A_2\sqrt{A}\\
	\vdots&\vdots&\vdots&\vdots&\vdots&\vdots&\vdots\\
	A_iA_{1}&A_iA_2&\cdots&A_iA_j&\cdots&A_iA_{n-1}&-A_i\sqrt{A}\\
	\vdots&\vdots&\vdots&\vdots&\vdots&\vdots&\vdots\\
	A_{n-1}A_{1}&A_{n-1}A_2&\cdots&A_{n-1}A_j&\cdots&A_{n-1}A_{n-1}&-A_{n-1}\sqrt{A}\\
	-A_1\sqrt{A}&-A_2\sqrt{A}&\cdots&-A_j\sqrt{A}&\cdots&-A_{n-1}\sqrt{A}&A
\end{pmatrix},\]
which clearly has rank $1$. Therefore, using \eqref{eq:IS-3.1} and \cite[Corollary 4.3.5]{HJ-mat.-an.}, we have
\begin{align}
	\lambda_2(M_n)&=\lambda_2\left(M^{\vartheta}_{n-1}\oplus[0]_{1\times 1}+L_n\right)\leq\lambda_3\left(M^{\vartheta}_{n-1}\oplus[0]_{1\times 1}\right)=\lambda_2\left(M^{\vartheta}_{n-1}\right)\label{eq:IS-4.1}\\
	\lambda_1(M_n+D_j\cdot F_n)&=\lambda_1\left(M^{\vartheta}_{n-1}\oplus[0]_{1\times 1}+L_n+D_j\cdot F_n\right)\nonumber\\
	&\leq \lambda_2\left(M^{\vartheta}_{n-1}\oplus[0]_{1\times 1}+D_j\cdot F_n\right)\label{eq:IS-4.2}
\end{align}
Now, recalling the notation $\Sp(M)$ for the spectrum of a given matrix $M$, we obtain
\begin{align}\label{eq:IS-5.1}
	\Sp\left(M^{\vartheta}_{n-1}\oplus[0]_{1\times 1}+D_j\cdot F_n\right)&=\Sp\left(\left(M^{\vartheta}_{n-1}+D_j\cdot F^{\vartheta}_{n-1}\right)\oplus[D_jy_n]_{1\times 1}\right)\\
	&=\Sp\left(M^{\vartheta}_{n-1}+D_j\cdot F^{\vartheta}_{n-1}\right)\cup\{D_jy_n\}.\nonumber
\end{align}
The matrix $M^{\vartheta}_{n-1}\oplus[0]_{1\times 1}$ is positive semidefinite using the second part of Lemma \ref{lem:Cesi's_useful_result},
\[\sum_{1\leq u<v<n}\left(x_{u,v}+\frac{x_{u,n}x_{v,n}}{x_{1,n}+\cdots+x_{n-1,n}}\right)(u,v)_G\in\mathbb{R}_+[G\wr S_n]^{(\s)},\]
and \eqref{eq:IS-2.5}. Therefore \cite[Corollary 4.3.3]{HJ-mat.-an.} implies
\begin{equation}\label{eq:IS-5.2}
	\lambda_k\left(M^{\vartheta}_{n-1}\oplus[0]_{1\times 1}+D_j\cdot F_n\right)\geq \lambda_k\left(D_j\cdot F_n\right)\text{ for all }k=1,\dots,n,
\end{equation}
i.e., in particular, using $y_n=\min\{y_i:1\leq i\leq n\}$, we have
\[\lambda_1\left(M^{\vartheta}_{n-1}\oplus[0]_{1\times 1}+D_j\cdot F_n\right)\geq \lambda_1\left(D_j\cdot F_n\right)=D_jy_n.\]
But \eqref{eq:IS-5.1} says that $D_jy_n$ is an eigenvalue of $M^{\vartheta}_{n-1}\oplus[0]_{1\times 1}+D_j\cdot F_n$, so it must be the lowest one. Again, using \eqref{eq:IS-5.1}, we conclude that
\begin{equation}\label{eq:IS-6}
	\lambda_2\left(M^{\vartheta}_{n-1}\oplus[0]_{1\times 1}+D_j\cdot F_n\right)=\lambda_1\left(M^{\vartheta}_{n-1}+D_j\cdot F^{\vartheta}_{n-1}\right)
\end{equation}

Now, we are ready to complete the proof. We first prove \eqref{eq:IS-1.1}. The fact $\MYGD_{0}\oplus\MYGD_{1}$ (respectively $\MYGD_{0}'\oplus\MYGD_{1}'$) contains the trivial representation of $G\wr S_n$ (respectively $G\wr S_{n-1}$) implies
\begin{align*}
	0&=\lambda_1\left(\Delta_{G\wr S_n}(\mathcal{G}_n,\MYGD_{0}\oplus\MYGD_{1})\right)<\lambda_2\left(\Delta_{G\wr S_n}(\mathcal{G}_n,\MYGD_{0}\oplus\MYGD_{1})\right),\text{ uses the irreducibility},\\
	0&=\lambda_1\left(\Delta_{G\wr S_{n-1}}(\vartheta\mathcal{G}_n,\MYGD_{0}'\oplus\MYGD_{1}')\right)\leq\lambda_2\left(\Delta_{G\wr S_{n-1}}(\vartheta\mathcal{G}_n,\MYGD_{0}'\oplus\MYGD_{1}')\right)
\end{align*}
Therefore,
\begin{align}\label{eq:IS-6.1}
	\psi_{G\wr S_n}(\mathcal{G}_n,\MYGD_{0}\oplus\MYGD_{1})&=\lambda_2\left(\Delta_{G\wr S_n}(\mathcal{G}_n,\MYGD_{0}\oplus\MYGD_{1})\right)\nonumber\\
	&=\lambda_2(M_n),\text{ using \eqref{eq:IS-2.4}}\nonumber\\
	&\leq \lambda_2\left(M^{\vartheta}_{n-1}\right),\text{ by \eqref{eq:IS-4.1}}\nonumber\\
	&=\lambda_2\left(\Delta_{G\wr S_{n-1}}\left(\vartheta\mathcal{G}_n,\MYGD_0'\oplus\MYGD_1'\right)\right),\text{ by \eqref{eq:IS-2.5}}\nonumber\\
	&\leq \psi_{G\wr S_{n-1}}\left(\vartheta\mathcal{G}_n,\MYGD_0'\oplus\MYGD_1'\right)
\end{align}
The inequality in \eqref{eq:IS-6.1} follows from the fact $\lambda_1\left(\Delta_{G\wr S_{n-1}}(\vartheta\mathcal{G}_n,\MYGD_{0}'\oplus\MYGD_{1}')\right)=0$, and hence \eqref{eq:IS-1.1} is proved. We now focus on proving \eqref{eq:IS-1.2}. For $2\leq j\leq t$, we have,
\begin{align}\label{eq:IS-6.2}
		\psi_{G\wr S_n}(\mathcal{G}_n,\MYGD_j)&=\lambda_1\left(\Delta_{G\wr S_n}(\mathcal{G}_n,\MYGD_j)\right),\text{ by irreducibility}\nonumber\\
		&=\lambda_1\left(M_n+D_j\cdot F_n\right),\text{ using \eqref{eq:IS-2.7}}\nonumber\\
		&\leq \lambda_2\left(M^{\vartheta}_{n-1}\oplus[0]_{1\times 1}+D_j\cdot F_n\right),\text{ by \eqref{eq:IS-4.2}}\nonumber\\
		&=\lambda_1\left(M^{\vartheta}_{n-1}+D_j\cdot F^{\vartheta}_{n-1}\right),\text{ by \eqref{eq:IS-6}}\nonumber\\
		&=\lambda_1\left(\Delta_{G\wr S_{n-1}}\left(\vartheta\mathcal{G}_{n},\MYGD_{j}'\right)\right),\text{ using \eqref{eq:IS-2.8}}\nonumber\\
		&\leq\psi_{G\wr S_{n-1}}\left(\vartheta\mathcal{G}_{n},\MYGD_{j}'\right).
\end{align}
This completes the proof of \eqref{eq:IS-1.2}. Therefore, as we mentioned before, the theorem follows from the decompositions \eqref{eq:IS-0.1} and \eqref{eq:IS-0.2} by using \eqref{eq:IS-6.1} and \eqref{eq:IS-6.2}.
 \end{proof}
\begin{lem}[{The octopus inequality}]\label{lem:Octopus}
	For $\mathcal{G}_n\in\mathbb{C}[G\wr S_n]$ defined in \eqref{eq:group_alg_sett}, and the projection $\vartheta\mathcal{G}_n$ defined in \eqref{eq:projection-defn}, we have
	\[\mathcal{G}_n-\vartheta\mathcal{G}_n\in\Gamma(G\wr S_n).\]
\end{lem}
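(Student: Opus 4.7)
The plan is to split $\mathcal{G}_n - \vartheta\mathcal{G}_n$ into two natural pieces, each belonging to $\Gamma(G\wr S_n)$ for independent reasons, and then close the argument with part (1) of Lemma \ref{lem:Cesi's_useful_result}. Setting $A := x_{1,n}+\cdots+x_{n-1,n}$ (which may be taken positive, else $\vartheta\mathcal{G}_n$ is undefined and the statement is vacuous), a direct comparison of \eqref{eq:group_alg_sett} with \eqref{eq:projection-defn} yields the decomposition
\begin{equation*}
\mathcal{G}_n-\vartheta\mathcal{G}_n \;=\; \mathcal{O}\,+\,\mathcal{Z},
\end{equation*}
where
\begin{equation*}
\mathcal{O} := \sum_{u=1}^{n-1} x_{u,n}(u,n)_G - \sum_{1\leq u<v<n}\frac{x_{u,n}x_{v,n}}{A}\,(u,v)_G\quad\text{and}\quad \mathcal{Z} := y_n\sum_{g\in G}\alpha_g\,(g)^{(n)}.
\end{equation*}

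For the ``lamp'' piece $\mathcal{Z}$, I note that $y_n>0$, $\alpha_g=\alpha_{g^{-1}}\geq 0$ for every $g$, and $((g)^{(n)})^{-1} = (g^{-1})^{(n)}$, so $\mathcal{Z}\in\mathbb{R}_+[G\wr S_n]^{(\s)}$. Lemma \ref{lem:Cesi's_useful_result} (2) then puts $\mathcal{Z}$ in $\Gamma(G\wr S_n)$ without further work.

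For the ``octopus'' piece $\mathcal{O}$, the crucial observation is that $\mathcal{O}$ is supported entirely on the subgroup $\widetilde{S_n}:=\{\pi_G : \pi\in S_n\}$ of $G\wr S_n$, which is canonically isomorphic to $S_n$ via $\pi\mapsto \pi_G$. Under this isomorphism, $\mathcal{O}$ corresponds exactly to the classical octopus element
\begin{equation*}
\sum_{u=1}^{n-1} x_{u,n}(u,n) - \sum_{1\leq u<v<n}\frac{x_{u,n}x_{v,n}}{A}\,(u,v) \;\in\; \mathbb{C}[S_n],
\end{equation*}
and the octopus inequality proved by Caputo, Liggett, and Richthammer \cite{CLR} asserts precisely that this element lies in $\Gamma(S_n)$. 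Invoking Lemma \ref{lem:Cesi's_useful_result} (3) for the subgroup $\widetilde{S_n}\leq G\wr S_n$ then gives $\mathcal{O}\in\Gamma(G\wr S_n)$.

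Combining the two contributions via Lemma \ref{lem:Cesi's_useful_result} (1) with $\alpha=\beta=1$ finishes the argument. The entire content of the lemma therefore reduces to the classical CLR octopus inequality on $S_n$; the passage to $G\wr S_n$ is essentially free, since the only part of $\mathcal{G}_n-\vartheta\mathcal{G}_n$ not supported on the $S_n$-subalgebra is the manifestly symmetric, non-negative lamp piece $\mathcal{Z}$ at position $n$. I expect the main obstacle to be purely bookkeeping: verifying the algebraic identity $\mathcal{G}_n-\vartheta\mathcal{G}_n=\mathcal{O}+\mathcal{Z}$ carefully, and confirming that the CLR octopus inequality, as it appears in \cite{CLR}, is being quoted in exactly the group-algebra form used here (i.e.\ as a statement about $\Delta_{S_n}(\cdot,\rho)$ being PSD for every representation $\rho$).
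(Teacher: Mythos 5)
Your proof is correct and follows essentially the same route as the paper: split off the classical CLR octopus element, transfer it into $\Gamma(G\wr S_n)$ via the embedding $\pi\mapsto\pi_G$ and Lemma \ref{lem:Cesi's_useful_result}(3), put the lamp term in $\Gamma(G\wr S_n)$ by Lemma \ref{lem:Cesi's_useful_result}(2), and combine with Lemma \ref{lem:Cesi's_useful_result}(1). If anything, your identification of the difference as $\mathcal{O}+y_n\sum_{g\in G}\alpha_g\,(g)^{(n)}$ is tidier bookkeeping than the paper's final display, which writes the lamp terms for $w=1,\dots,n-1$ rather than just the $w=n$ term actually appearing in $\mathcal{G}_n-\vartheta\mathcal{G}_n$.
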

\begin{proof}
	We use \cite[Theorem 2.3]{CLR}, the``octopus inequality", which implies that
	\begin{align*}
		\sum_{u=1}^{n-1}x_{u,n}(u,v)-\sum_{1\leq u<v<n}\frac{x_{u,n}x_{v,n}}{x_{1,n}+\cdots+x_{n-1,n}}\;(u,v)\in\Gamma(S_n),
	\end{align*}
	an alternative algebraic proof is also available at \cite[Section 4]{Cesi-octopus}. Therefore, using the embedding $S_n:\pi\mapsto \pi_G\in G\wr S_n$, and the third part of Lemma \ref{lem:Cesi's_useful_result}, we have
	\begin{align*}
		\sum_{u=1}^{n-1}x_{u,n}(u,v)_G-\sum_{1\leq u<v<n}\frac{x_{u,n}x_{v,n}}{x_{1,n}+\cdots+x_{n-1,n}}\;(u,v)_G\in\Gamma(G\wr S_n).
	\end{align*}
	Again, the first part of Lemma \ref{lem:Cesi's_useful_result} implies that $\displaystyle\sum_{w=1}^{n-1}y_w\displaystyle\sum_{g\in G}\alpha_g\left(g\right)^{(w)}\in\Gamma(G\wr S_n)$; and hence by the same argument, we have
	\[\sum_{w=1}^{n-1}y_w\sum_{g\in G}\alpha_g\left(g\right)^{(w)}+\sum_{u=1}^{n-1}x_{u,n}(u,v)_G-\sum_{1\leq u<v<n}\frac{x_{u,n}x_{v,n}}{x_{1,n}+\cdots+x_{n-1,n}}\;(u,v)_G\in\Gamma(G\wr S_n)\]
	Thus, the lemma follows from \eqref{eq:group_alg_sett} and \eqref{eq:projection-defn}.
\end{proof}
\begin{proof}[\textbf{Proof of Theorem \ref{thm:main_thm}}]
	We use the first principle of mathematical induction for the proof. The base case of $n=2$ follows from Theorem \ref{thm:base-case}. We assume Theorem \ref{thm:main_thm} holds for a positive integer $n-1$. 
	
	Recall that $\G=(V,E)$ is the underlying graph of $\mathcal{G}_n$, i.e., we perform the interchange process on $\G$. Also, recall $\vartheta\mathcal{G}_n$ from \eqref{eq:projection-defn} and denote its underlying graph by $\vartheta\G$. We note that $\Supp\left(\vartheta\mathcal{G}_n\right)$ generates $G\wr S_{n-1}$ if and only if $\vartheta\G$ is connected. Let $u,v$ be any two vertices of $\vartheta\G$. Then the connectedness of $\G$ implies that there is a path
	\[\gamma:u=u_0\rightarrow u_1\rightarrow\cdots\rightarrow u_{\ell}=v\]
	from $u$ to $v$ in $\G$. If $n\notin \gamma$, then $\gamma$ lies in $\vartheta\G$. If $n\in \gamma$, then $u_r=n$ for some positive integer $r$, and we choose the path
	\[\gamma':u=u_0\rightarrow u_1\rightarrow u_{r-1}\overset{\cancel{n}}{\rightarrow}u_{r+1}\rightarrow\cdots\rightarrow u_{\ell}=v\]
	from $u$ to $v$ that lies in $\vartheta\G$. The existence of $\gamma'$ is guaranteed by the definition \eqref{eq:projection-defn} of $\vartheta\mathcal{G}_n$. Hence, $\vartheta\G$ is connected, i.e., $\Supp(\vartheta\mathcal{G}_n)$ generates $G\wr S_{n-1}$. Therefore, using the induction hypothesis, we have the following:
	\begin{equation}\label{eq:main_thm-ind-st}
		\psi_{G\wr S_{n-1}}(\vartheta\mathcal{G}_n)= \psi_{G\wr S_{n-1}}(\vartheta\mathcal{G}_n,\mathcal{R}_{n-1}).
	\end{equation}
	Here, the notations have the same meaning as those in \eqref{eq:sp-gap-rep1} and \eqref{eq:grp_action-repn}.

	Again, recalling $\MYGD_{0}$ and $\MYGD_{1}$ from \eqref{eq:def-action-not}, $\MYGD_{j}:=\MYGD_{1,j}\;(2\leq j\leq t)$ from \eqref{eq:notations-YGD}, and the branching rule Theorem \ref{thm:Branching-rule}, we can conclude that 
	\[\big\{\rho\in\widehat{G\wr S_n}:\rho\downarrow_{G\wr S_{n-1}}^{G\wr S_n}\text{ contains the trivial representation of }G\wr S_{n-1}\big\}=\big\{\MYGD_j:0\leq j\leq t\big\}.\]
	Therefore, using Lemma \ref{lem:Octopus}, i.e., $\mathcal{G}_n-\vartheta\mathcal{G}_n\in\Gamma(G\wr S_n)$, and Proposition \ref{prop:Cesi's_useful_resul}, we have
	\begin{align}
		\psi_{G\wr S_{n}}(\mathcal{G}_n)&\geq\min\{\psi_{G\wr S_{n-1}}\left(\vartheta\mathcal{G}_n\right),\psi_{G\wr S_n}(\mathcal{G}_n,\MYGD_j):0\leq j\leq t\}\nonumber\\
		&=\min\{\psi_{G\wr S_{n-1}}\left(\vartheta\mathcal{G}_n\right),\psi_{G\wr S_n}(\mathcal{G}_n,\mathcal{R}_n)\}\label{eq:main_thm1}\\
		&=\min\{\psi_{G\wr S_{n-1}}(\vartheta\mathcal{G}_n,\mathcal{R}_{n-1}),\psi_{G\wr S_n}(\mathcal{G}_n,\mathcal{R}_n)\}\label{eq:main_thm2}\\
		&=\psi_{G\wr S_n}(\mathcal{G}_n,\mathcal{R}_n)\label{eq:main_thm3}
	\end{align}
	Here, \eqref{eq:main_thm1} follows from the decomposition Theorem \ref{thm:grp_action-decomposition}, \eqref{eq:main_thm2} follows from the inductive step \eqref{eq:main_thm-ind-st}, and \eqref{eq:main_thm3} follows from Theorem \ref{thm:inductive-step}.
	
	On the other hand, \eqref{eq:sp-gap-rep1} immediately implies the other side of the inequality, i.e.,
	\begin{equation}\label{eq:main_thm4}
		\psi_{G\wr S_{n}}(\mathcal{G}_n)\leq \psi_{G\wr S_n}(\mathcal{G}_n,\mathcal{R}_n).
	\end{equation}
	The proof is completed here.
\end{proof}
\section{Example and open question}\label{sec:examples}
In this section, we give an example that shows that Theorem \ref{thm:main_thm} fails when we include a reflection of type \eqref{eq:gen_refl} in the group algebra element \eqref{eq:group_alg_sett}. We first consider the element
\begin{equation}\label{eq:group_alg_sett-ex}
	\mathcal{G}'_n=y\sum_{g\in G}(\Gid,\dots,\Gid,\hspace*{-0.65cm}\underset{\substack{\uparrow\\n\text{th position}}}{g}\hspace*{-0.65cm};\1)+x\sum_{u=1}^{n-1}\sum_{g\in G}(\Gid,\dots,\Gid,\hspace*{-0.65cm}\underset{\substack{\uparrow\\u\text{th position}}}{g}\hspace*{-0.65cm},\Gid,\dots,\Gid,\hspace*{-0.45cm}\underset{\substack{\uparrow\\n\text{th position}}}{g^{-1}}\hspace*{-0.5cm};(u,n)),\;x,y>0.
\end{equation}
We obtain all the eigenvalues of the (left) multiplication action of 
\[\Delta(\mathcal{G}'_n):=y\sum_{g\in G}\left(\Gnid-(\Gid,\dots,\Gid,\hspace*{-0.65cm}\underset{\substack{\uparrow\\n\text{th position}}}{g}\hspace*{-0.65cm};\1)\right)+x\sum_{u=1}^{n-1}\sum_{g\in G}\left(\Gnid-(\Gid,\dots,\Gid,\hspace*{-0.65cm}\underset{\substack{\uparrow\\u\text{th position}}}{g}\hspace*{-0.65cm},\Gid,\dots,\Gid,\hspace*{-0.45cm}\underset{\substack{\uparrow\\n\text{th position}}}{g^{-1}}\hspace*{-0.5cm};(u,n))\right)\]
on $\mathbb{C}[G\wr S_n]$. We now appeal to the \emph{Gelfand-Tsetlin (sub)algebra}, a maximal commuting subalgebra of $\mathbb{C}[G\wr S_n]$ that acts like scalars on all the Gelfand-Tsetlin subspaces of every irreducible $G\wr S_n$-modules. The \emph{Young-Jucys-Murphy} elements generate the Gelfand-Tsetlin (sub)algebra of $\mathbb{C}[G\wr S_n]$; we refer the readers to \cite[Section 3]{MS} for more details. One specific Young-Jucys-Murphy element plays an important role in our context. The Young-Jucys-Murphy elements $X_1(G),\dots,X_n(G)$ of $\mathbb{C}[G\wr S_n]$ are given by
\begin{equation}\label{eq:YJM_els}
	X_1(G)= 0,\;X_i(G)=\displaystyle\sum_{k=1}^{i-1}\sum\limits_{g\in G}(\Gid,\dots,\Gid,\hspace*{-0.65cm}\underset{\substack{\uparrow\\k\text{th position}}}{g}\hspace*{-0.65cm},\Gid,\dots,\Gid,\hspace*{-0.4cm}\underset{\substack{\uparrow\\i\text{th position}}}{g^{-1}}\hspace*{-0.55cm},\Gid,\dots,\Gid;(k,i))\text{ for all }2\leq i\leq n.
\end{equation}
Given $\mu\in\yn(\irrG)$, recall the Gelfand-Tsetlin decomposition
\[V^{\mu}=\underset{T\in\tab_{G}(n,\mu)}{\oplus}V_T\]
from Theorem \ref{thm:GT-decomposition}. Here, $V_T$ is the Gelfand-Tsetlin subspace of $V^{\mu}$, the irreducible $G\wr S_n$-module indexed by $\mu$. Then, the matrices of the action of the Young-Jucys-Murphy elements $X_1(G),\dots,X_n(G)$ on $V_T$ are given as follows (\cite[Theorem 6.5]{MS}):
\begin{equation}\label{eq:YJM_action_on_GT-subspace}
	\big[X_i(G)\big]_{V_T}=\frac{|G|}{\dim(W^{r_T(i)})}c(b_T(i))\cdot I_{\dim(V_T)},
\end{equation}
where $W^{r_T(i)}$ is the irreducible $G$-module indexed by $r_T(i)$, and $I_{\dim(V_T)}$ is the identity matrix of order $\dim(V_T)$. Recall $r_T(i)$ and $c(b_T(i))$ from Definition \ref{def:b_(i)+r_T(i)}.

We note that $\sum_{g\in G}g$ commutes with the elements of $\mathbb{C}[G\wr S_n]$. Thus, the matrix of $\sum_{g\in G}g$ on the irreducible $G$-module $W^{\sigma},\sigma\in\irrG$, can be obtained by Schur's lemma, given below.
\begin{equation}\label{eq:all_sum_action}
	\Bigg[\sum_{g\in G}g\Bigg]_{W^{\sigma}}=\frac{|G|}{\dim(W^{\sigma})}\delta_{\trivial,\sigma}\cdot I_{\dim(W^{\sigma})}.
\end{equation}
\begin{thm}\label{thm:group_alg_sett-ex}
	For every $\mu\in\yn(\irrG)$, the eigenvalues of $\Delta(\mathcal{G}'_n)$ are given by
	\[|G|\left(y+x(n-1)-\frac{y\delta_{\trivial,r_T(n)}+xc(b_T(n))}{\dim(W^{r_T(n)})}\right),\text{ with multiplicity }\dim\left(V^{\mu}\right)\dim\left(V_T\right),\]
	 for all $T\in\tab_{G}(n,\mu)$. Here, $r_T(i)$ and $c(b_T(i))$ are recalled from Definition \ref{def:b_(i)+r_T(i)}, and $W^{r_T(i)}$ is the irreducible $G$-module indexed by $r_T(i)$.
\end{thm}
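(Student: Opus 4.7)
The plan is to rewrite $\Delta(\mathcal{G}'_n)$ in terms of operators whose action on each Gelfand-Tsetlin subspace is already understood, and then read off the eigenvalues from \eqref{eq:YJM_action_on_GT-subspace} and \eqref{eq:all_sum_action}. The reflection terms in $\Delta(\mathcal{G}'_n)$ match the summands defining $X_n(G)$ in \eqref{eq:YJM_els} exactly, while $\sum_{g\in G}(g)^{(n)}$ accounts for the lamp-update at position $n$, and the remaining constants collapse onto $\Gnid$. Concretely, I would first verify the algebraic identity
\[
\Delta(\mathcal{G}'_n) \;=\; \bigl(y+x(n-1)\bigr)|G|\,\Gnid \;-\; y\sum_{g\in G}(g)^{(n)} \;-\; x\,X_n(G).
\]

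Next, fix $\mu\in\yn(\irrG)$ and $T\in\tab_G(n,\mu)$. By Theorem \ref{thm:GT-decomposition}, the Gelfand-Tsetlin subspace $V_T$ is closed under the action of $G^n$, and as a $G^n$-module it is isomorphic to $W^{r_T(1)}\otimes\cdots\otimes W^{r_T(n)}$. Under this isomorphism $(g)^{(n)}$ acts as $I\otimes\cdots\otimes I\otimes g$ on the last tensor factor, so $\sum_{g\in G}(g)^{(n)}$ acts as $I\otimes\cdots\otimes I\otimes\bigl(\sum_{g\in G} g\bigr)\big|_{W^{r_T(n)}}$, which by \eqref{eq:all_sum_action} is the scalar $\tfrac{|G|}{\dim(W^{r_T(n)})}\delta_{\trivial,r_T(n)}$ times the identity on $V_T$. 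The Young-Jucys-Murphy element $X_n(G)$ acts on $V_T$ by the scalar $\tfrac{|G|}{\dim(W^{r_T(n)})}c(b_T(n))$ by \eqref{eq:YJM_action_on_GT-subspace}. Combining the three contributions, $\Delta(\mathcal{G}'_n)$ acts on $V_T$ as the scalar operator
\[
\lambda_T \,:=\, |G|\left(y+x(n-1)-\frac{y\delta_{\trivial,r_T(n)}+xc(b_T(n))}{\dim(W^{r_T(n)})}\right)\cdot I_{\dim(V_T)},
\]
so $\lambda_T$ is the unique eigenvalue of $\Delta(\mathcal{G}'_n)$ on $V_T$, with multiplicity $\dim(V_T)$ within $V^{\mu}$.

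Finally, to pass from the irreducible picture to the full left multiplication action on $\mathbb{C}[G\wr S_n]$, I would invoke the regular representation decomposition \eqref{eq:Group_alg._decom.}: each irreducible $V^{\mu}$ occurs with multiplicity $\dim(V^{\mu})$ in $\mathbb{C}[G\wr S_n]$. Hence the eigenvalue $\lambda_T$ appears with total multiplicity $\dim(V^{\mu})\cdot\dim(V_T)$, and letting $T$ range over $\tab_G(n,\mu)$ and $\mu$ over $\yn(\irrG)$ exhausts the whole spectrum. The only nontrivial algebraic step is the rewriting above; once it is in place, everything reduces to a direct appeal to \eqref{eq:YJM_action_on_GT-subspace}, \eqref{eq:all_sum_action}, and \eqref{eq:Group_alg._decom.}, and I do not anticipate a real obstacle. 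The key conceptual point—and the reason this example cleanly exhibits the failure of the Aldous-type statement when reflections of the form \eqref{eq:gen_refl} are admitted—is precisely that $\Delta(\mathcal{G}'_n)$ lies in the Gelfand-Tsetlin algebra of $\mathbb{C}[G\wr S_n]$, so its spectrum is rigidly prescribed by the contents $c(b_T(n))$ and the $G$-label $r_T(n)$ of the last box.
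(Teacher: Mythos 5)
Your proposal is correct and follows essentially the same route as the paper: the same rewriting $\Delta(\mathcal{G}'_n)=(y+x(n-1))|G|\,\Gnid-y\sum_{g\in G}(g)^{(n)}-x\,X_n(G)$, the same scalar action on each Gelfand-Tsetlin subspace via \eqref{eq:all_sum_action} and \eqref{eq:YJM_action_on_GT-subspace}, and the same multiplicity count via \eqref{eq:Group_alg._decom.} and Theorem \ref{thm:GT-decomposition}. No gaps.
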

\begin{proof}
	Given $T\in\tab_{G}(n,\mu)$, we first obtain the eigenvalues of the action of $\Delta(\mathcal{G}'_n)$ on the Gelfand-Tsetlin subspace $V_T$ of the irreducible $G\wr S_n$-module $V^{\mu}$. Theorem \ref{thm:GT-decomposition} implies that 
	\[V_T\cong_{G^n}W^{r_T(1)}\otimes W^{r_T(2)}\otimes \cdots\otimes W^{r_T(n)}.\]
	Therefore, using \eqref{eq:all_sum_action}, we can conclude that
	\begin{equation}\label{eq:group_alg_sett-ex1}
		\Bigg[\sum_{g\in G}(\Gid,\dots,\Gid,\hspace*{-0.65cm}\underset{\substack{\uparrow\\n\text{th position}}}{g}\hspace*{-0.65cm};\1)\Bigg]_{V_T}=\frac{|G|}{\dim(W^{r_T(n)})}\delta_{\trivial,r_T(n)}\cdot I_{\dim(V_T)}.
	\end{equation}
	Recalling the $n$th Young-Jucys-Murphy element $X_n(G)$, we have the following:
	\[\Delta(\mathcal{G}'_n)=(y+x(n-1))|G|\Gnid-y\sum_{g\in G}(\Gid,\dots,\Gid,\hspace*{-0.65cm}\underset{\substack{\uparrow\\n\text{th position}}}{g}\hspace*{-0.65cm};\1)-xX_n(G)\]
	Therefore, using \eqref{eq:YJM_action_on_GT-subspace} and \eqref{eq:group_alg_sett-ex1}, the matrix of the action of $\Delta(\mathcal{G}'_n)$ on the Gelfand-Tsetlin subspace $V_T$ is given below
	\begin{align}\label{eq:group_alg_sett-ex2}
		\big[\Delta(\mathcal{G}'_n)\big]_{V_T}&=\left((y+x(n-1))|G|-\frac{y|G|}{\dim(W^{r_T(n)})}\delta_{\trivial,r_T(n)}-\frac{x|G|}{\dim(W^{r_T(n)})}c(b_T(n))\right)\cdot I_{\dim(V_T)}\nonumber\\
		&=|G|\left(y+x(n-1)-\frac{y\delta_{\trivial,r_T(n)}+xc(b_T(n))}{\dim(W^{r_T(n)})}\right)\cdot I_{\dim(V_T)}
	\end{align}
	Therefore, the theorem follows from the decomposition \eqref{eq:Group_alg._decom.} of the (left) regular representation, the Gelfand-Tsetlin decomposition given in Theorem \ref{thm:GT-decomposition}, and \eqref{eq:group_alg_sett-ex2}.
\end{proof}

Now, recalling $\MYGD_{0},\MYGD_{1}$ from \eqref{eq:def-action-not}, and $\MYGD_{j}=\MYGD_{1,j}\;(j\geq 2)$ from \eqref{eq:notations-YGD}, Theorem \ref{thm:group_alg_sett-ex} implies
\begin{align*}
	\psi_{G\wr S_{n}}(\mathcal{G}'_n,\MYGD_{0}\oplus\MYGD_{1})&=x|G|,\text{ obtained from }T=\tiny{\left(\begin{array}{c}\young(134{\;$\cdots$\;\;}{n},2)\end{array},\emptyset,\dots,\emptyset\right)},\\
	\psi_{G\wr S_{n}}(\mathcal{G}'_n,\MYGD_j)&=x|G|,\text{ obtained from }T=\tiny{\left(\begin{array}{c}\young(**{\;$\cdots$\;\;}{n})\end{array},\emptyset,\dots,\begin{array}{c}\young(*)\end{array},\dots,\emptyset\right)}.
\end{align*}
Therefore, using \eqref{eq:grp_action-repn}, we have 
\begin{equation}\label{eq:example1.1}
	\psi_{G\wr S_{n}}(\mathcal{G}'_n,\mathcal{R}_n)=x|G|
\end{equation}
For $j\geq 2$, we also have
\begin{equation}\label{eq:example1.2}
	\psi_{G\wr S_{n}}\left(\mathcal{G}'_n,\tiny{\left(\emptyset,\dots,\emptyset,\underset{\underset{j\text{th position}}{\uparrow}}{\begin{array}{c}\young(\;{\;$\cdots$\;\;}\;)\end{array}},\emptyset,\dots,\emptyset\right)}\right)=|G|\left(y+x(n-1)\left(1-\frac{1}{\dim(W^{\sigma_j})}\right)\right).
\end{equation}

In particular, if $G$ to be a commutative group (of order $t$), then all the irreducible $G$-modules are one-dimensional. Thus, setting $y<x$ (e.g., $y=2023,x=2024$), we have
\[\psi_{G\wr S_{n}}\left(\mathcal{G}'_n\right)\leq\psi_{G\wr S_{n}}\left(\mathcal{G}'_n,\tiny{\left(\emptyset,\dots,\emptyset,\begin{array}{c}\young(\;{\;$\cdots$\;\;}\;)\end{array},\emptyset,\dots,\emptyset\right)}\right)=y|G|<x|G|=\psi_{G\wr S_{n}}(\mathcal{G}'_n,\mathcal{R}_n)\]
from \eqref{eq:example1.1} and \eqref{eq:example1.2}. Thus, the conclusion of Theorem \ref{thm:main_thm} is not true for this example. Therefore, Theorem \ref{thm:main_thm} fails to hold if we consider reflections of the form \eqref{eq:gen_refl} in the group algebra element \eqref{eq:group_alg_sett}. Below, we ask a natural open question to the complete generality.
\begin{opqn}
	Let $G$ be any finite group and $S_n$ be the symmetric group. Consider the group algebra element $\mathcal{G}''\in\mathbb{C}[G\wr S_n]$ as follows
	\[\mathcal{G}''=\sum_{1\leq u<v\leq n}x_{u,v}\sum_{g\in G}\beta_{g}(\dots,\Gid,\hspace*{-0.65cm}\underset{\substack{\uparrow\\u\text{th position}}}{g}\hspace*{-0.65cm},\Gid,\dots,\Gid,\hspace*{-0.45cm}\underset{\substack{\uparrow\\v\text{th position}}}{g^{-1}}\hspace*{-0.6cm},\dots;(u,v))+\sum_{w=1}^{n}y_w\sum_{g\in G}\alpha_{g}(\dots,\Gid,\hspace*{-0.65cm}\underset{\substack{\uparrow\\w\text{th position}}}{g}\hspace*{-0.65cm},\Gid,\dots;\1),\]
	where $x_{u,v},\beta_g,\alpha_g\geq 0$ and $y_w>0$. Assume that the support of $\mathcal{G}''$, i.e.,
	\[\Bigg\{(\dots,\Gid,\hspace*{-0.65cm}\underset{\substack{\uparrow\\u\text{th position}}}{g}\hspace*{-0.65cm},\Gid,\dots,\Gid,\hspace*{-0.45cm}\underset{\substack{\uparrow\\v\text{th position}}}{g^{-1}}\hspace*{-0.6cm},\dots;(u,v)),(g)^{(w)}:x_{u,v},\beta_{g},\alpha_g>0,1\leq u<v\leq n,g\in G,w\in[n]\Bigg\}\]
	generates $G\wr S_n$. Then obtain the irreducible representation $\rho$ of $G\wr S_n$ such that
	\[\psi_{G\wr S_{n}}(\mathcal{G}'')=\psi_{G\wr S_{n}}(\mathcal{G}'',\rho).\]
\end{opqn}
\subsection*{Acknowledgement} I express my sincere gratitude to Gady Kozma for an insightful discussion, for sharing the article \cite{Cesi}, and for the motivation to explore this direction. I thank Arvind Ayyer for his inspiring comments and for suggesting the title. I also thank Gideon Amir for his encouragement and support and for hosting me as a post-doctoral fellow at Bar-Ilan University. The research is supported by the Israel Science Foundation grant \#957/20.

\bibliography{ACWr_ref}{}
\bibliographystyle{plain}
\end{document}